\crefname{equation}{Equation}{Equations}
\crefname{conjecture}{Conjecture}{Conjectures}
\crefname{figure}{Figure}{Figures}
\crefname{thm}{Theorem}{Theorems}
\newtheorem{thm}{Theorem}
\numberwithin{thm}{section} 
\newtheorem{lemma}[thm]{Lemma}
\newtheorem{cor}[thm]{Corollary}
\newtheorem{prop}[thm]{Proposition}
\theoremstyle{definition}
\newtheorem{defn}[thm]{Definition}
\theoremstyle{remark}
\newtheorem{rem}[thm]{Remark}
\newtheorem{example}[thm]{Example}
\def\n{\ensuremath{e}}
\newcommand{\IQ}{\mathbb{Q}}
\newcommand{\IZ}{\mathbb{Z}}
\newcommand{\Pet}{\mathbf{P}}
\newcommand{\excise}[1]{}
\newcommand\junk[1]{}
\newcommand{\Fl}{\mathcal Fl}
\newcommand{\Hom}{\operatorname{Hom}}
\newcommand{\set}[2]{\left\{#1\,\middle\vert\,#2\right\}}
\def\bar{\overline}
\newcommand{\X}{\mathfrak X}
\begin{document}
\title{Positivity of Peterson Schubert Calculus}
\author{Rebecca Goldin}
\address{
Department of Mathematics,
George Mason University,
Fairfax VA 22030
USA}
\email{rgoldin@gmu.edu}

\author{Leonardo Mihalcea}
\address{
Department of Mathematics, 
Virginia Tech University, 
Blacksburg, VA 24061
USA
}
\email{lmihalce@vt.edu}

\author{Rahul Singh}
\address{
Department of Mathematics, 
Virginia Tech University, 
Blacksburg, VA 24061
USA
}
\email{rahul.sharpeye@gmail.com}

\maketitle
\begin{abstract} 
The Peterson variety is a subvariety of the flag manifold $G/B$ 
equipped with an action of a one-dimensional torus,
and a torus invariant paving by affine cells, called Peterson cells.
We prove that the equivariant pull-backs of Schubert classes indexed by
arbitrary Coxeter elements are dual (up to an intersection multiplicity)
to the fundamental classes of Peterson cell closures.
Dividing these classes by the intersection multiplicities yields a $\mathbb Z$-basis for the equivariant cohomology of the Peterson variety.
We prove several properties of this basis, including a Graham positivity property
for its structure constants, and stability with respect to inclusion in a larger Peterson variety.
We also find formulae for intersection multiplicities with Peterson classes.
This explains geometrically, in arbitrary Lie type,
recent positivity statements proved in type A by Goldin and Gorbutt.
\end{abstract}

\section{Introduction}
Among the most important properties of the cohomology rings of complex flag manifolds 
is that they have a distinguished basis: the Schubert basis, $\{\sigma_v: v\in W\}$, indexed by the Weyl group $W$. The Schubert structure constants for the product in singular cohomology in this basis are {\em positive}, which means that all nonzero coefficients in the product expansion
$$
\sigma_u\sigma_v =\sum_{w\in W} c_{uv}^w \sigma_w
$$ are positive. This is a consequence of transversality
properties, as the coefficients $c_{uv}^w$
count the number of points in the 
intersection of three Schubert varieties translated in general (transversal)
position. The study of positivity properties of these coefficients across a wide range of cohomology theories has spurred a large body of literature in algebraic geometry, representation theory, and combinatorics. 

In the case of torus-equivariant cohomology, a conjecture by Peterson \cite{peterson:notes}, proved by
Graham \cite{graham:positivity}, states that the
Schubert structure constants for
the torus equivariant cohomology rings of flag manifolds are polynomials
in the simple roots with positive coefficients. Graham's proof relies on refined
transversality techniques (see also \cite{AGM:KTpos}), which are expected to be useful
in analyzing the equivariant cohomology ring of varieties related to flag manifolds.

Hessenberg varieties form a remarkable family of subvarieties of flag manifolds, 
and appear across multiple disciplines within mathematics;
see~\cite{Abe:survey} for a recent survey. 
In this paper we focus on a particular class of 
regular nilpotent Hessenberg varieties, namely Peterson varieties.

Peterson varieties may also be realized as a flat degeneration of 
certain regular semisimple Hessenberg varieties such as the permutohedral 
variety; see e.g., \cite{klyachko85,klyachko95,abe.fujita.zeng}.
Peterson varieties share many properties with flag varieties, 
and provide a fertile ground for exploration. 
They initially appeared in the study of the 
quantum cohomology ring of (generalized)
flag manifolds $G/B$ in \cite{kostant:flag,peterson:notes}.

The purpose of this paper is to prove a positivity property of the equivariant cohomology ring 
of Peterson varieties, similar to that of flag varieties. This may be seen as a step towards investigating positivity and transversality properties of the (equivariant) cohomology
ring of the larger family of Hessenberg varieties, little of which is known. We give next a more precise account of our results.

Let $G$ be a complex, semisimple Lie group,
let $B, B^- \subset G$ be opposite Borel subgroups,
and let $T:= B \cap B^-$ be the associated maximal torus.
Let $\Delta$ be the set of positive simple roots corresponding to the choice of $B$,
and let $e$ be a principal nilpotent element contained in $\bigoplus_{\alpha\in \Delta} \mathfrak g_\alpha$,
where $\mathfrak g_\alpha$ is the root space in $\mathfrak g:=Lie(G)$ corresponding to the root $\alpha$.
Denote by $W$ the Weyl group associated to $(G,T)$ with length function $\ell:W \to \mathbb N$, 
and denote by $w_0 \in W$ the longest element.
Let $G^e$ be the centralizer of $e$ in $G$.
The Peterson variety 
\begin{equation}\label{eq:defP}
\Pet:= \overline{G^e. w_0B} \hookrightarrow G/B, 
\end{equation}
is the closure of the $G^e$-orbit of $w_0B$ inside the flag manifold $G/B$. 
It admits an action of a one-dimensional torus $S \subset T$ with finitely many fixed points.
We denote by $\iota: \Pet \hookrightarrow G/B$ the inclusion.


In this manuscript we investigate $H^*_S(\Pet):=H^*_S(\Pet;\mathbb Z)$,
the integral $S$-equivariant cohomology ring of the Peterson variety.
A presentation of $H^*_S(\Pet;\IQ)$ by generators and relations was given by
Harada, Horiguchi and Masuda in \cite{harada.horiguchi.masuda:Peterson}.

Let $\sigma_{v_I} \in H^*_S(G/B)$ be a Schubert class indexed
by some Coxeter element $v_I \in W$ for $I \subset \Delta$, 
let $p_I:= \iota^*(\sigma_{v_I})$ be the pullback of $\sigma_{v_I}$ along the inclusion $\iota:\Pet\to G/B$,
and let $m(v_I)$ be the intersection multiplicity from \Cref{thm:dualsintro} below.
We show that $\left\{ \frac{p_I}{m(v_I)} \right\}_{I \subset \Delta}$ is a $H^*_S(pt)$-basis of $H_S^*(\Pet)$,
and that this basis is dual to the equivariant Borel-Moore homology basis
constituted by the fundamental classes of Peterson varieties (see \cref{se:cohomology}).
In particular, while the class $p_I$ depends on the choice of $v_I$,
the class $\frac{p_I}{m(v_I)}$ is independent of this choice.

In our main result we prove that the structure constants of multiplication
are positive with respect to the basis $\left\{ \frac{p_I}{m(v_I)} \right\}$
in the sense of Graham \cite{graham:positivity}. This generalizes recent
results in Lie type A by Goldin and Gorbutt \cite{goldin.gorbutt:PetSchubCalc}, 
who found manifestly positive combinatorial formulae
for the structure constants in question. Special cases of such formulae 
were found earlier by Harada and Tymoczko \cite{harada.tymoczko:Monk}, and by 
Drellich \cite{drellich:monk}. 

We now present a more precise version of our results. For $I\subset\Delta$, let 
$w_I$ be the maximal element of the Weyl group of $I$, and
let $\Pet_I^\circ := \Pet \cap Bw_IB/B$.
Tymoczko \cite{tymoczko:paving} and B{\u a}libanu \cite{balibanu:peterson} proved that
$\Pet_I^\circ \simeq \mathbb{C}^{|I|}$,
and the cells $\Pet_I^\circ$ (called \emph{Peterson cells}) form an affine paving of the Peterson variety.
Consequently, the fundamental classes $[\Pet_I]_S \in H_{2 |I|}^S(\Pet)$,
where $\Pet_I=\overline{\Pet_I^\circ}$, form a basis of $H_*^S(\Pet)$ over $H^*_S(pt)$.

For $v\in W$, let
 $X^v:=\overline{B^-vB/B}$ denote the (opposite) Schubert variety in $G/B$,
and let $\sigma_v \in H^{2\ell(v)}_S(G/B)$ be the corresponding Poincar\'e dual class, satisfying
the equality $\sigma_v\cap[G/B]_S=[X^v]_S$.
Consider the pairing
$\langle \cdot , \cdot \rangle: H^*_S(\Pet) \otimes H_*^S(\Pet) \to H^*_S(pt)$
of equivariant cohomology and equivariant homology defined by 
$\langle a,b \rangle = \int_{\Pet} a \cap b$; see \S \ref{sec:eqcoh}.
Our first result is the following (cf.~\cref{thm:duals} below):

\begin{thm}[Duality Theorem]
\label{thm:dualsintro}
Let $I, J$ be subsets of the set of simple roots $\Delta$ and let $v_I\in W$ be any Coxeter element for $I$.
Then
\[ \left\langle \iota^*\sigma_{v_I}, [\Pet_J]_S\right\rangle  = m(v_I) \delta_{I,J} \/, \]
where $m(v_I) \in \mathbb{Z}_{>0}$ is the multiplicity of the (unique) intersection point of $X^{v_I} \cap \Pet_I$.
\end{thm}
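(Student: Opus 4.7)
The plan is to apply the projection formula to transport the pairing from $\Pet$ to the flag manifold $G/B$, and then evaluate the resulting intersection number of $X^{v_I}$ with $\Pet_J$, analyzing three cases. By the projection formula for the closed embedding $\iota : \Pet \hookrightarrow G/B$,
\begin{equation*}
\left\langle \iota^*\sigma_{v_I}, [\Pet_J]_S \right\rangle = \int_{G/B} \sigma_{v_I} \cap \iota_*[\Pet_J]_S,
\end{equation*}
so the pairing equals the equivariant intersection number of $X^{v_I}$ with $\Pet_J$ inside $G/B$.

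For the vanishing when $I \neq J$, two subcases arise. Since $v_I$ is a Coxeter element of $W_I$, every reduced word for $v_I$ uses each simple reflection of $I$ exactly once, whence $v_I \leq w_J$ in Bruhat order if and only if $I \subset J$. If $I \not\subset J$, then $X^{v_I} \cap X_{w_J} = \emptyset$, and the inclusion $\Pet_J \subset X_{w_J}$ forces $X^{v_I} \cap \Pet_J = \emptyset$, so the refined intersection class, and hence the pairing, vanishes. If instead $I \subsetneq J$, then $|I| < |J|$ and the pairing lies in $H^{2(|I|-|J|)}_S(\mathrm{pt}) = 0$ by degree reasons.

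For the main case $I = J$, the strategy is to prove $X^{v_I} \cap \Pet_I = \{w_I B\}$ set-theoretically; the scheme-theoretic intersection is then a fat point of length $m(v_I)$, and the pairing equals $m(v_I)$. Decomposing $\Pet_I$ into the Peterson cells it contains, each cell $\Pet_K^\circ$ appearing satisfies $|K| \leq |I|$ by dimension; for $K \neq I$ with $|K| \leq |I|$, one cannot have $I \subset K$, so $v_I \not\leq w_K$, and $\Pet_K^\circ \subset Bw_K B/B$ is disjoint from $X^{v_I}$. Hence $\Pet_I \cap X^{v_I} = \Pet_I^\circ \cap X^{v_I}$, which is simultaneously closed in the affine cell $\Pet_I^\circ \simeq \mathbb{C}^{|I|}$ and in the projective variety $\Pet_I$; being both affine and projective it is finite, and being $S$-invariant with unique $S$-fixed point $w_I B$ (the only $w_K$ with $K \subset I$ and $I \subset K$) it equals $\{w_I B\}$.

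The principal obstacle lies in this last step: establishing properness of the intersection $X^{v_I} \cap \Pet_I$. The \emph{affine $+$ projective $=$ finite} argument rests on the Peterson cells being affine cells sitting inside Schubert cells (Tymoczko, B\u{a}libanu), as recalled in the paper. The explicit computation of $m(v_I)$ from root data is a separate task not required to establish the duality statement.
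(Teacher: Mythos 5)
Your proposal is correct and follows essentially the same approach as the paper's: the projection formula transports the pairing to $G/B$, vanishing for $I\neq J$ is obtained by combining Bruhat-order disjointness (the case $I\not\subset J$) with a degree count (the case $I\subsetneq J$), and the case $I=J$ reduces to the one-point intersection $X^{v_I}\cap\Pet_I=\{w_I\}$ — the paper packages the vanishing arguments as Lemmas~\ref{lemma:order} and~\ref{prop:nonvan} on the Schubert expansion of $\iota_*[\Pet_J]_S$, and the $I=J$ case as Corollary~\ref{intDual}. The one step you should make explicit is the identification of the equivariant pairing with the ordinary intersection multiplicity when $I=J$: since $|I|=|J|$ the pairing lives in $H^0_S(\mathrm{pt})=\IZ$, and it specializes under $H^*_S\to H^*$ to the non-equivariant intersection number of $X^{v_I}$ with $\Pet_I$, which by properness of the intersection is the length of the fat point at $w_I$; the paper carries this out via the fixed-point localization $\sigma_{v_I}\cup\eta_I = m(v_I)\tau_{w_I}$ rather than by asserting the scheme-length interpretation directly.
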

This follows because the varieties $X^{v_I}$ and $\Pet_I$ intersect at a unique point,
namely $w_I$, the longest element in the subgroup $W_I$ determined by $I$.
The remaining part of the proof exploits the poset structure of the affine paving by Peterson cells,
along with the duality of Schubert classes in $G/B$. 

A non-equivariant version of this theorem has appeared in the literature in type A in a recent preprint \cite{abe.horiguchi.kuwata.zeng:left-right}, and may be deduced in general Lie type from \cite{insko.tymoczko:intersection.theory, insko:schubert}, where the intersection $X^{v_I} \cap \Pet(I)$ was analyzed.
Besides working non-equivariantly, 
a key restriction in all these papers is that the Coxeter elements $v_I$ are not arbitrary,
but depend on a certain ordering of the simple roots.
Our 
approach removes this restriction.
In \cref{sec:intmult}, we give algorithms
to calculate the aforementioned multiplicities based on equivariant localization, 
and closed formulae for a particular basis; see also \Cref{intro:ClassicalFormulae} below 
and the related discussion. A formula for $m(v_I)$ for any Coxeter element $v_I$ 
was recently obtained in \cite{goldin.singh}; see \cref{generalFormula} below.

The duality theorem has several consequences.  
For each Coxeter element $v_I$ for $I$, recall that
$p_I:= \iota^*\sigma_{v_I} \in H^{2|I|}_S(\Pet)$.
Then the classes 
$\set{\frac{p_I}{m(v_I)}}{I \subseteq \Delta}$ form a 
$H^*_S(pt)$-basis of $H^*_S(\Pet)$;
see \cref{prop:Petbasis}.
By the duality theorem, 
the equivariant push forward $\iota_*:H_*^S(\Pet) \to H_*^S(G/B)$ is injective. 
Non-equivariantly, the injectivity was proved in \cite{insko.tymoczko:intersection.theory}.

The cocharacter $h$ of $T$ satisfying $\alpha(h) =2$ for all $\alpha \in \Delta$
determines a one dimensional subtorus $S \subset T$,
satisfying $\alpha|S=\alpha'|S$ for any $\alpha,\alpha'\in\Delta$; see \cref{subsec:petersonVariety}.
Consequently there is
a well defined element $t \in H^*_S(pt)$ given by
$t:=\alpha|S$ for $\alpha\in\Delta$.

\begin{thm}[Positivity]
\label{thm:mainintro}
Let $I, J, K$ be subsets of $\Delta$. 
The structure constants of multiplication, $c_{I,J}^K \in H^*_S(pt)$, given by
\begin{equation}
\label{E:Pexp}
p_I \cdot p_J = \sum_K c_{I,J}^K p_K 
\end{equation} 
are polynomials in $t$ with non-negative coefficients.
\end{thm}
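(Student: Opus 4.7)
The plan is to combine the Duality Theorem (\cref{thm:dualsintro}) with Graham's positivity for $H^*_T(G/B)$, converting the structure constants into intersection pairings in the ambient flag variety.

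By the Duality Theorem and the projection formula for $\iota\colon\Pet\hookrightarrow G/B$,
\[
c_{I,J}^K\,m(v_K)\;=\;\langle p_I\,p_J,\,[\Pet_K]_S\rangle\;=\;\int_{G/B}\sigma_{v_I}\,\sigma_{v_J}\cap\iota_*[\Pet_K]_S.
\]
Since $m(v_K)\in\mathbb Z_{>0}$, the problem reduces to showing that the right-hand side lies in $\mathbb Z_{\ge0}[t]\subseteq H^*_S(pt)$.

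Graham's theorem in $H^*_T(G/B)$ supplies an expansion $\sigma_{v_I}\,\sigma_{v_J}=\sum_w d^{w}\sigma_w$ with $d^{w}\in\mathbb Z_{\ge0}[\Delta]$. Restriction of $T$-characters to the one-parameter subgroup $S$ sends every positive simple root to $t$, so $d^{w}|_S\in\mathbb Z_{\ge0}[t]$. Substituting the expansion reduces the theorem to the claim that
\[
\int_{G/B}\sigma_w\cap\iota_*[\Pet_K]_S\;\in\;\mathbb Z_{\ge0}[t]\qquad\text{for all }w\in W,\;K\subseteq\Delta.
\]
A clean route to this is to prove the stronger \emph{pushforward positivity} statement that $\iota_*[\Pet_K]_S$ expands with coefficients in $\mathbb Z_{\ge0}[t]$ in the equivariant Schubert homology basis $\{[X^u]_S\}$ of $H_*^S(G/B)$; indeed, pairing $\sigma_w$ with any $[X^u]_S$ returns $\int_{G/B}\sigma_w\sigma_u$, which is Graham-positive by a second application of Graham's theorem.

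The main obstacle is this pushforward positivity, because $\Pet_K$ is only $S$-invariant and not $T$-invariant, so Kumar's positivity for $T$-stable subvarieties of $G/B$ does not apply directly. To handle it I would induct on $|K|$, exploiting the closure order of the Peterson paving to express $\iota_*[\Pet_K]_S$ in terms of smaller Peterson cell classes together with a $T$-invariant correction, which one can engineer from the flat degeneration of $\Pet$ to a regular semisimple Hessenberg variety mentioned in the introduction. Alternatively, one can combine Drellich's Giambelli formula, which writes $p_I$ as a product of the divisor pullbacks $p_{\{\alpha\}}$ (for $\alpha\in I$) up to a positive integer, with the Duality Theorem itself, recasting the pairings $\langle\iota^*\sigma_w,[\Pet_K]_S\rangle$ as multiplicities of explicit intersections inside $\Pet$ whose non-negativity can be read off geometrically. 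Either route, coupled with the Graham step above, delivers $c_{I,J}^K\in\mathbb Z_{\ge0}[t]$.
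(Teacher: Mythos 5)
Your reduction is sound up to the point where everything hinges on the ``pushforward positivity'' claim: you correctly observe via the Duality Theorem and the projection formula that it suffices to show $\iota_*[\Pet_K]_S$ has Graham-positive coefficients in a Schubert homology basis. You even correctly name the apparent obstacle --- $\Pet_K$ is $S$-stable but not $T$-stable. But here the proposal goes off the rails: the two workarounds you sketch (induction using the flat degeneration to a Hessenberg variety, or rerouting through Drellich's Giambelli formula) are vague, not obviously productive, and neither is carried out. In fact, the obstacle you are trying to route around is illusory. Graham's theorem (the version quoted as \cref{thm:graham-pos} in the paper) is not restricted to $T$-stable subvarieties of $G/B$: it applies to \emph{any} connected solvable group $B'=T'N'$ acting on $X$ and \emph{any} $T'$-stable subvariety $Y$. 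The paper applies it directly with $X=G/B$, $Y=\Pet_K$, $T'=S$, and $B'=SU$ (where $U$ is the unipotent radical of $B$). Since $U\subset B'\subset B$, the $B'$-stable closed irreducible subvarieties are exactly the Schubert varieties $X_v$, and the $S$-weights of $\operatorname{Lie}(U)$ are $ht(\alpha)\,t$ for $\alpha\in\Phi^+$, i.e., positive multiples of $t$. Graham's theorem then gives at once that $\iota_*[\Pet_K]_S=\sum_v c_K^v[X_v]_S$ with $c_K^v\in\mathbb Z_{\ge 0}[t]$ (this is the paper's \cref{thm:hompos}), and the rest of your argument closes.

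Two smaller remarks. First, expanding in the basis $\{[X_u]_S\}$ (not $\{[X^u]_S\}$) is cleaner, because then $\langle\sigma_w,[X_u]_S\rangle=\delta_{u,w}$ and the pairing step is immediate, with no second invocation of Graham's theorem needed. Second, you should be aware that $m(v_K)$ appears on the left of your key identity: positivity of the right-hand side gives positivity of $c_{I,J}^K\,m(v_K)$, and since $m(v_K)$ is a positive integer you may conclude positivity of $c_{I,J}^K$ as an element of $\mathbb Q_{\ge 0}[t]$; to land in $\mathbb Z_{\ge 0}[t]$ one should first expand $\iota^*\sigma_w$ in the $p_K$-basis and invoke \cref{thm:expansions}, which is what the paper does, rather than divide by $m(v_K)$ at the end.
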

\Cref{thm:mainintro} (\cref{thm:mainpos}~below) generalizes several positivity statements to arbitrary Lie type, 
while providing a uniform proof in all cases. 
In the case where $|I|=1$, i.e., $p_I$ is a divisor class,
a positive Monk-Chevalley formula for the structure constants $c_{I,J}^K$ was obtained
by Harada and Tymoczko \cite{harada.tymoczko:Monk} in Lie type A,
and in arbitrary Lie type by Drellich \cite{drellich:monk}.
For general $c_{I,J}^K$, and in Lie type A, Goldin and Gorbutt \cite{goldin.gorbutt:PetSchubCalc}
found a manifestly positive combinatorial formula
for all equivariant coefficients $c_{I,J}^K$ in the expansion \eqref{E:Pexp}.
A different combinatorial model computing these coefficients in
non-equivariant cohomology was recently obtained
in \cite{abe.horiguchi.kuwata.zeng:left-right}.


The proof of \Cref{thm:mainintro} relies on the duality theorem,
and on positivity statements proved by Graham \cite{graham:positivity}.
The structure constants of the multiplication $\sigma_u\cup\sigma_v \in H^*_S(G/B)$
are positive in the sense of \Cref{thm:mainintro}.
Hence it suffices to show that the coefficients $b_w^J \in H^*_S(pt)$ of the restricted classes 
$ \iota^*\sigma_w = \sum_J b_w^J p_J $ ($w \in W$ arbitrary) satisfy the same positivity.
By the duality theorem, 
the positivity of the coefficients $b_w^J$ is equivalent to the positivity (in a suitable sense)
of the coefficients $c_I^v$ in the Schubert expansion 
\begin{equation}
\label{E:Petexp}
\qquad \iota_*[\Pet_I]_S = \sum_{v\in W} c_I^v [X_v]_S \quad \in H_*^S(G/B) \/, 
\end{equation} 
where $[X_v]_S$ denotes the homology class of the Schubert variety $X_v:=\overline{BvB/B}$ in $G/B$.
In the non-equivariant case, this is clear. Indeed, by Kleiman transversality \cite{kleiman:transversality}, the Schubert classes are a basis for the Chow group of $G/B$, and form
a set of primitive generators for the cone of effective algebraic cycles. 
Since the Chow group 
is equal to $H_*(G/B)$ (cf.~\cite[Ex. 19.1.11]{fulton:IT}), the claim follows.
Equivariantly, we deduce the positivity of $c_I^v$ from a general positivity result of Graham
\cite{graham:positivity} for expansions of fundamental classes of torus invariant varieties;
cf.~\Cref{thm:hompos}. 
A different geometric approach to positivity (in the non-equivariant setting)
was pursued in \cite{abe.horiguchi.kuwata.zeng:left-right}; see \cref{rmk:posrmk} below.

Using equivariant localization,
we obtain formulae for the multiplicities $m(v_I)$ in the duality theorem,
and an effective algorithm to find the Schubert expansion from \cref{E:Petexp};
see \cref{schubertExpansion}. 
Our algorithms for the coefficients $c_I^v$, and for the multiplicities $m(v_I)$,
rely on the restriction of the Schubert classes $\sigma_{v_I}$ to the fixed points $w_J$ in $G/B$ (see \cref{se:cohomology}).
These {\em equivariant localizations} may be calculated using
formulae developed by Andersen, Jantzen, and Soergel \cite{AJS} and Billey \cite{billey:kostant}).
 
\begin{thm}
\label{intro:ClassicalFormulae}
\begin{enumerate}[label=(\alph*),leftmargin=*]
\item
Let $I$ be a connected 
Dynkin diagram with the standard labelling,
see \cite{bourbaki:Lie46}, and set $v_I=s_1s_2\cdots s_n$.
Then,
\begin{align*}
m(v_I)=
\begin{cases}
1      &\text{if }I=A_n,\\
2^{n-1}&\text{if }I=B_n, C_n,\\
2^{n-2}&\text{if }I=D_n,\\
72=2^3 \cdot 3^2     &\text{if }I=E_6,
\end{cases}
&&
m(v_I)=
\begin{cases}
864=2^5 \cdot 3^3     &\text{if }I=E_7,\\
51840=2^7 \cdot 3^4 \cdot 5  &\text{if }I=E_8,\\
48=2^4 \cdot 3     &\text{if }I=F_4,\\
6 = 2 \cdot 3     &\text{if }I=G_2.
\end{cases}
\end{align*}
\item
Let $I_1,\cdots,I_k$ be the connected components of a Dynkin diagram $I$,
and let $v_1,\cdots,v_k$ be any Coxeter elements for $I_1,\cdots,I_k$ respectively.
Then $v:=v_1\cdots v_k$ is a Coxeter element for $I$, and $m(v)=\prod m(v_j)$.
\end{enumerate}
\end{thm}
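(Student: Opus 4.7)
The plan is to combine the Duality Theorem with equivariant localization to reduce $m(v_I)$ to an explicit ratio at the fixed point $w_I B/B$.

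By \cref{thm:dualsintro} and the projection formula, $m(v_I)=\int_{G/B}\sigma_{v_I}\cup\iota_*[\Pet_I]_S$. Applying equivariant localization at the $S$-fixed points of $G/B$, a term indexed by $w\in W$ vanishes unless (i) $w=w_J\in\Pet_I$ for some $J\subseteq I$, and (ii) $v_I\leq w_J$ in Bruhat order. Condition (ii) forces $J=I$: any reduced word for $w_J$ uses only $\{s_i:i\in J\}$, but a Coxeter element $v_I$ requires all of $\{s_i:i\in I\}$. Since $\Pet_I$ is smooth at $w_I$ (as $w_I\in \Pet_I^\circ\simeq\IA^{|I|}$), the localization formula collapses to
\[
m(v_I)=\frac{\sigma_{v_I}|_{w_I}}{e_S(T_{w_I}\Pet_I)},
\]
a ratio of two degree-$|I|$ polynomials in $t$ with integer value. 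The numerator is computed by Billey's formula, and the denominator is the product of $S$-weights of the tangent space of the Peterson cell at $w_I$.

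For part (b), when $I=I_1\sqcup\cdots\sqcup I_k$ has disconnected components, both $w_I=w_{I_1}\cdots w_{I_k}$ and $v_I=v_{I_1}\cdots v_{I_k}$ factor into pairwise commuting contributions from each $I_j$, since the components involve mutually orthogonal simple roots. Taking a reduced word for $w_I$ that concatenates reduced words for the $w_{I_j}$, Billey's formula gives $\sigma_{v_I}|_{w_I}=\prod_j\sigma_{v_{I_j}}|_{w_{I_j}}$: any subword selecting $v_I$ must independently pick out a $v_{I_j}$-subword from each component block. Similarly, the Peterson cell of the Levi $L_I=\prod_j L_{I_j}$ factors as a product, so $T_{w_I}\Pet_I$ decomposes as a direct sum indexed by components and $e_S(T_{w_I}\Pet_I)=\prod_j e_S(T_{w_{I_j}}\Pet_{I_j})$. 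Combining, $m(v_I)=\prod_j m(v_{I_j})$.

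For part (a), I would evaluate the formula on each connected Dynkin type. The $A_n$ case is the base: a direct comparison of weights shows $\sigma_{v_{A_n}}|_{w_{A_n}}=e_S(T_{w_{A_n}}\Pet_{A_n})$, giving multiplicity $1$. For the infinite families $B_n, C_n, D_n$, I would induct on the rank, relating the rank-$n$ formula to the rank-$(n{-}1)$ one by adjoining a simple root at the end of the Dynkin diagram; the new factors in numerator and denominator produce a ratio of $2$ in types $B_n$ and $C_n$, and similarly explain the exponent shift in type $D_n$. For each exceptional type $E_6, E_7, E_8, F_4, G_2$, a finite direct computation from the explicit positive root data, possibly aided by computer algebra, completes the verification.

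The main obstacle is producing a workable description of the tangent space $T_{w_I}\Pet_I$ and its $S$-weights in each type. This requires carefully unpacking the affine paving of Tymoczko and B{\u a}libanu to identify the tangent directions to $\Pet_I$ at $w_I$ as specific root subspaces, and tracking these through Billey's formula to obtain $\sigma_{v_I}|_{w_I}$ in a form where the cancellation against $e_S(T_{w_I}\Pet_I)$ is transparent.
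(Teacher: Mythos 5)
Your localization identity
\[
m(v_I)=\frac{\sigma_{v_I}\vert_{w_I}}{e_S\left(T_{w_I}\Pet_I\right)}
\]
is correct, and is exactly equivalent to the formula the paper records as Proposition~\ref{rootHts}(a) (there stated as $m(v_I)=b/(m_1\cdots m_n)$ after the denominator has been evaluated). Your vanishing argument for the other fixed points is also correct. Your part~(b) argument, via factoring Billey's formula over the commuting blocks, is valid; the paper instead observes directly that $\Fl(I)$, $X^v$, and $\Pet(I)$ all factor as products over the components, so that the intersection multiplicity is multiplicative for geometric reasons. Either route works; the paper's is shorter since no computation with Billey's formula is needed.

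The genuine gap is precisely the one you flag as ``the main obstacle,'' and it is the crux of the theorem. You need a closed form for $e_S\left(T_{w_I}\Pet_I\right)$, and the paper's key structural input (Lemma~\ref{wtsADelta}) is that $T_{w_I}\Pet_I=Lie\left(G^e\right)$ has $S$-weights exactly $m_1t,\dots,m_nt$, where the $m_i$ are the exponents of $I$. This is not an unpacking of Tymoczko/B\u{a}libanu's coordinates; it is an application of $\mathfrak{sl}_2$-representation theory to the regular nilpotent $e$ (the kernel of $\operatorname{ad}(e)$ is spanned by highest-weight vectors, whose $h$-weights are $2m_i$). Without identifying this, your denominator is an unevaluated product, and none of the specific integers in part~(a) are reachable. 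Relatedly, for the infinite families the paper does not induct on rank: it uses smoothness of $X^{v_I}$ at $w_I$ (cf.\ Insko--Tymoczko) to replace $\sigma_{v_I}\vert_{w_I}$ by the product of heights of the roots $\beta$ with $s_\beta\not\leq v_Iw_I$ (Proposition~\ref{rootHts}(b)), reducing part~(a) to an explicit finite check against the table of exponents. Your proposed rank-induction for $B_n,C_n,D_n$ is plausible but unverified, and your treatment of $A_n$ (``a direct comparison of weights shows\ldots'') is a placeholder rather than a proof. In short: the reduction to a localization ratio is right and matches the paper, but you have not carried out the two ingredients that turn the ratio into the stated numbers (the exponent computation of the denominator, and the height-product evaluation of the numerator via smoothness of $X^{v_I}$).
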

See \cref{ClassicalFormulae} below. The factorization is related
to the exponents of the Lie algebra of $G$, see \S \ref{sec:intmult} below, and also the recent paper
\cite{goldin.singh}.
\Cref{intro:ClassicalFormulae} generalizes a result of Insko \cite{insko:schubert},
who showed that when $I=A_n$, $m(s_1\cdots s_n)=1$.
More generally, the theorem addresses \cite[Question 1]{insko.tymoczko:intersection.theory} by providing an explicit formula for these multiplicities; in particular, it disproves the conjecture by  Insko and Tymoczko
that the multiplicities are always $1$ or $2$ in classical Lie types.
The proof of part (a) of \Cref{intro:ClassicalFormulae} utilizes equivariant localization,
while the proof of part (b) utilizes the stability property of Peterson classes explained below.
Using parts (a) and (b) concurrently allows us to compute $m(v_I)$ for some Coxeter element $v_I$ in each Dynkin diagram $I$,
and hence allows us to construct the dual class $\frac{\iota^*\sigma_{v_I}}{m(v_I)}$ of any Peterson subvariety $\Pet_I\subset\Pet$.

Consider $I\subset\Delta$, a subset of the Dynkin diagram, and
let $G_I$ be a semisimple group with Dynkin diagram $I$.
Let $G_I/B_I$ and $\Pet(I)$ denote the flag variety and the 
Peterson variety of $G_I$ respectively, and let $S_I$
be the  one-dimensional subtorus defined analogously to $S$, and acting on $\Pet(I)$.
There is a natural 
closed embedding $i:G_I/B_I \hookrightarrow G/B$, but unfortunately
there may not be a morphism $S_I \to S$ which is compatible with this 
embedding. This leads to some technical subtleties explained in Section 
 \ref{sec:petvarstab}. The upshot is that there is an algebra isomorphism
$H^*_S(pt;\mathbb{Q}) \simeq H^*_{S_I}(pt;\mathbb{Q})$, and induced maps 
$H^*_S(G/B;\mathbb{Q}) \xrightarrow{i^*} H^*_{S_I}(G_I/B_I;\mathbb{Q})$ and
$H_*^{S_I}(G_I/B_I;\mathbb{Q}) \xrightarrow{i_*} H_*^{S}(G/B;\mathbb{Q})$.
The stability theorem, proved in \Cref{cor:PSisPet,prop:naturality}, is the following.
\begin{thm}[Stability]
\label[theorem]{intro:stable} 
(a) $i(\Pet(I))=\Pet \cap i( G_I/B_I) =\Pet_I$, as  subsets of $G/B$.


(b) For $J\subset I$, we have $i_*([\Pet(J)]_{S_I})=[\Pet_J]_S$,
as classes in $H_*^S(\Pet; \mathbb{Q})$.

(c) Let 
$j: \Pet(I)\hookrightarrow \Pet$ be the restriction of $i:G_I/B_I \to G/B$.
For $K\subset\Delta$, we have 
$$j^*(p_K)=\begin{cases}p_K&\text{if }K\subset I,\\0&\text{otherwise},\end{cases}$$
as classes in $H^*_{S_I}(\Pet(I);\mathbb{Q})$.

Furthermore, in the non-equivariant case, the statements in (b) and (c) hold over $\mathbb{Z}$.
\end{thm}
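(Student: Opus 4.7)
The plan is to prove (a) first by a direct Lie-algebraic identification of the Hessenberg-variety models of $\Pet$ and $\Pet(I)$, and then to deduce (b) and (c) formally from (a) combined with equivariant pushforward and localization.

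For (a), I would start from the Hessenberg descriptions $\Pet=\{gB\in G/B:\operatorname{Ad}(g^{-1})e\in H\}$, with $H=\mathfrak{b}\oplus\bigoplus_{\alpha\in\Delta}\mathfrak{g}_{-\alpha}$, and $\Pet(I)=\{gB_I\in G_I/B_I:\operatorname{Ad}(g^{-1})e_I\in H_I\}$, with $e_I=\sum_{\alpha\in I}e_\alpha$ and $H_I=\mathfrak{b}_I\oplus\bigoplus_{\alpha\in I}\mathfrak{g}_{-\alpha}$. The key Lie-theoretic observation is that for $g\in G_I$ and $\alpha\in\Delta\setminus I$, the vector $\operatorname{Ad}(g^{-1})e_\alpha$ is a sum of root vectors $e_\beta$ with $\beta\in\alpha+\mathbb{Z}\langle I\rangle$; every such $\beta$ has $\alpha$-coefficient $1$ and is therefore a positive root, so $\operatorname{Ad}(g^{-1})e_\alpha\in\mathfrak{b}\subset H$. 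Splitting $e=e_I+e_{\Delta\setminus I}$ then reduces the Peterson condition for $gB$ to $\operatorname{Ad}(g^{-1})e_I\in H\cap\mathfrak{g}_I=H_I$, which is exactly the Peterson condition for $gB_I$. This gives $\Pet\cap i(G_I/B_I)=i(\Pet(I))$. To finish, I would observe that $i(\Pet(I))$ is irreducible of dimension $|I|$, and its big cell $i(\Pet(I)_I^\circ)$ lands in $\Pet\cap Bw_IB/B=\Pet_I^\circ$, also irreducible of dimension $|I|$; taking closures (and using that $i$ is a closed embedding) yields $i(\Pet(I))=\Pet_I$.

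For (b), I would apply (a) to the sub-diagram $J\subset I$ through both embeddings $G_J/B_J\hookrightarrow G_I/B_I$ and $G_J/B_J\hookrightarrow G/B$. This identifies $\Pet(J)$ simultaneously with $\Pet(I)_J$ inside $G_I/B_I$ and with $\Pet_J$ inside $G/B$, so $i$ restricts to an isomorphism $\Pet(I)_J\to\Pet_J$. The pushforward of a fundamental class along an isomorphism is the fundamental class, giving $i_*[\Pet(J)]_{S_I}=[\Pet_J]_S$. Rational coefficients are needed only because the pushforward is defined through the algebra isomorphism $H^*_S(pt;\mathbb{Q})\simeq H^*_{S_I}(pt;\mathbb{Q})$; in the non-equivariant case the underlying isomorphism of varieties suffices, and the statement holds integrally.

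For (c), my plan is to compute $j^*p_K$ as the pullback of $i^*\sigma_{v_K}$ along the inclusion $\Pet(I)\hookrightarrow G_I/B_I$, using equivariant localization at the $T_I$-fixed points $vB_I$ for $v\in W_I$. By the Billey/AJS formula, $\sigma_{v_K}|_{vB}=0$ unless $v_K\le v$ in the Bruhat order, which requires $K=\operatorname{supp}(v_K)\subseteq\operatorname{supp}(v)\subseteq I$. If $K\not\subset I$, every localization of $i^*\sigma_{v_K}$ vanishes, and GKM-injectivity over $\mathbb{Q}$ forces $i^*\sigma_{v_K}=0$, whence $j^*p_K=0$. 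If $K\subset I$, then $v_K\in W_I$ and the Billey formula for $\sigma_{v_K}|_{vB}$ with $v\in W_I$ only involves roots in $\langle I\rangle$; it thus coincides with the Billey formula for $\sigma^{(I)}_{v_K}|_{vB_I}$, so $i^*\sigma_{v_K}=\sigma^{(I)}_{v_K}$ by localization, and pulling back to $\Pet(I)$ gives $j^*p_K=p^{(I)}_K$. For the integral non-equivariant statement, the rational equalities specialize integrally since $H^*(G_I/B_I;\mathbb{Z})$ is torsion-free.

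I expect the main technical hurdle to be the Lie-algebraic step in (a): controlling the extra term $\operatorname{Ad}(g^{-1})e_{\Delta\setminus I}$ requires a careful analysis of the $G_I$-module structure on $\mathfrak{g}$ and of the positivity of roots in $\alpha+\mathbb{Z}\langle I\rangle$. Once (a) is established, parts (b) and (c) become essentially formal consequences of (a), standard properties of equivariant pushforward/localization, and the affine paving by Peterson cells.
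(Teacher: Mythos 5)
Your argument for part (a) is correct and is a close variant of the paper's. You work directly with the Hessenberg condition $\operatorname{Ad}(g^{-1})e\in H$ and split $e=e_I+e_{\Delta\setminus I}$; the key observation that $\operatorname{Ad}(g^{-1})e_\alpha\in\mathfrak b$ for $g\in G_I$, $\alpha\in\Delta\setminus I$ is the same fact the paper packages as ``$\mathfrak v_I=\bigoplus_{\alpha\in\phi_\Delta^+\setminus\phi_I^+}\mathfrak g_\alpha$ is $\mathfrak g_I$-stable.'' The paper proves the reverse inclusion $\Pet(\Delta)\cap\Fl(I)\subset\Pet(I)$ separately via the $\mathfrak g_I$-equivariant projection $\mathrm{pr}:\mathfrak g_\Delta\to\mathfrak g_I$; your additive decomposition handles both directions at once, which is a mild streamlining. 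Your concluding dimension/irreducibility argument is also fine (the paper just invokes irreducibility of $\Pet(I)$ from the appendix). For part (c), your localization argument via Billey's formula is a legitimate alternative to the paper's dual-basis argument (Lemma \ref{schubStab}), and both handle the vanishing when $K\not\subset I$ correctly.

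However, there is a genuine gap in your treatment of (b) and (c), and it is precisely the step you dismiss as formal. The maps $i_*:H_*^{S_I}(\Pet(I);\mathbb Q)\to H_*^{S}(\Pet;\mathbb Q)$ and $j^*:H^*_S(\Pet;\mathbb Q)\to H^*_{S_I}(\Pet(I);\mathbb Q)$ do \emph{not} exist automatically, because the inclusion $i:\Fl(I)\hookrightarrow\Fl(\Delta)$ is generally \emph{not} equivariant with respect to any group homomorphism $S_I\to S$. (The paper spells out a counterexample in Remark \ref{rmk:nomap}: for $A_2\subset A_3$ with $G_I=SL_3\subset G=SL_4$, the Lie algebra map $h_I\mapsto h$ does not lift to a morphism $S_I\to S$.) Writing ``the pushforward is defined through the algebra isomorphism $H^*_S(pt;\mathbb Q)\simeq H^*_{S_I}(pt;\mathbb Q)$'' names the symptom without supplying the construction. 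What is actually needed is the paper's intermediary: the cocharacters $h_I:\mathbb C^*\to S_I$ and $h:\mathbb C^*\to S$ make $i$ a $\mathbb C^*$-equivariant map, the cocharacters induce isomorphisms $H^*_{S_I}(-;\mathbb Q)\cong H^*_{\mathbb C^*}(-;\mathbb Q)$ and $H^*_{S}(-;\mathbb Q)\cong H^*_{\mathbb C^*}(-;\mathbb Q)$ (this is where rational coefficients enter, since $h$ need not be injective on characters over $\mathbb Z$), and $i_*$, $j^*$ are defined as composites through the $\mathbb C^*$-equivariant theory. Once this scaffolding is in place, your localization and fundamental-class computations do go through; but without it, the statements in (b) and (c) are not even well-posed. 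You also flag part (a) as the main technical hurdle, when in fact (a) is a straightforward Lie-algebra computation and the real subtlety in this theorem is the construction of the maps in (b)--(c).
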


The proof of the stability theorem utilizes a common alternate description of the Peterson variety, namely
\begin{equation}
\label{eq:seconddefinition}
\Pet=\set{ gB\in G/B}{ Ad(g^{-1})e\in \mathfrak b \oplus \bigoplus_{\alpha\in \Delta} \mathfrak g_{-\alpha}},
\end{equation}
where $\mathfrak b=Lie(B)$.
In Appendix A,
we take the opportunity to present a proof that the definitions 
\eqref{eq:defP} and \eqref{eq:seconddefinition} are equivalent,
a matter of folklore implied by, and implicit in, Kostant's original work \cite{kostant:flag}.

\emph{Acknowledgements:}  The authors thank an anonymous referee for
careful reading and suggestions which helped improve the exposition of this paper.
We thank Ana B{\u a}libanu 
for explaining to us a proof of the irreducibility of Peterson variety,
and  Hiraku Abe and Tatsuya Horiguchi for explaining to us their results from
\cite{abe.fujita.zeng,abe.horiguchi.kuwata.zeng:left-right,horiguchi2021mixed}.
LM would like to thank Sean Griffin for useful discussions and 
for pointing out the reference \cite{nadeau.tewari}.
RG thanks Julianna Tymoczko for insightful conversations about Peterson varieties. 
The computer calculations in this paper were coded in SageMath \cite{sagemath}.
We thank the authors of the Weyl Groups and Root Systems libraries in SageMath.
RS would like to thank Camron Withrow for his help with the Sage code. During the 
preparation stages of this paper, LM enjoyed the hospitality and the wonderful environment of ICERM, 
as a participant to the special semester in `Combinatorial Algebraic Geometry'. 

R.~F.~Goldin was supported in part by a National Science Foundation grant DMS-2152312. L.~C.~Mihalcea was supported in part by a Simons Collaboration Grant and 
by the National Science Foundation under grant DMS-2152294, and under grant
DMS-1439786 while the author was in residence at the Institute for Computational and Experimental Research in Mathematics in Providence, RI, during the Spring 2021 semester.

{\em Conventions.} We work over the field of complex numbers. By a variety
we mean a reduced, irreducible scheme of finite type. Schemes defined as algebraic group orbits,
and closures of group orbits, 
are always equipped with the induced reduced scheme structure; see, e.g., \cite[tag 01IZ]{stacks-project}.

\section{Equivariant (Co)Homology}\label{se:cohomology}\label{sec:eqcoh}

Let $X$ be a complex algebraic variety equipped with a left action of a torus $T$.
We recall aspects of the $T$-equivariant homology and cohomology of $X$.
We will use the Borel model of equivariant cohomology,
and equivariant Borel-Moore homology,
following the setup in Graham's paper \cite{graham:positivity}. 
We refer to \cite[Ch~19]{fulton:IT}, \cite[Appendix~B]{fulton:young}, \cite[\S 2.6]{chriss.ginzburg}
for more details about cohomology and Borel-Moore homology. 

Since we are working with algebraic varieties, 
our statements and proofs
could have been written using the language of equivariant Chow groups
\cite{edidin.graham}. For full results, this requires some additional properties of
the operational Chow ring of {\em linear varieties} proved by Totaro \cite{totaro}. 
Aware of this technicality, the reader may 
use the equivariant cycle map from \cite{edidin.graham} to 
freely swap between the Borel-Moore and Chow theories.

Fix an identification $T\cong(\mathbb{C}^*)^r$ and let 
$ET =(\mathbb{C}^\infty \setminus 0)^r$ be the universal $T$-bundle
with classifying space $BT= (\mathbb{P}^\infty)^r$.
The product $ET \times X$ has a right $T$-action given by $(e,y).t:= (et, t^{-1}y)$.
The action is free,
and the orbit space $X_T:= (ET \times X)/T$ is called the Borel mixing space of $X$.
The universal $T$-bundle $ET \to BT$ admits 
finite dimensional approximations $ET_n \to BT_n$,
where $ET_n =(\mathbb{C}^{n+1} \setminus 0)^r$ and $BT_n:=(\mathbb{P}^n)^r$.
These induce finite dimensional approximations of the Borel mixing space 
$X_{T,n} := (ET_n \times X)/T$,
and inclusions $X_{T,n_1} \subset X_{T,n_2}$ for $n_1 < n_2$. 

We define the equivariant cohomology ring by $H^*_T(X):= H^*(X_T)$;
note that we have $H^i_T(X)= H^i(X_{T,n})$ for sufficiently large $n$.
The equivariant Borel-Moore homology groups are defined via a limiting property,
\begin{align*}
&&&&H_i^T(X) := H_{i+ 2nr}^{BM}(X_{T,n}),&& \textrm{ for }  n \gg 0 &&
\end{align*}
where the right hand side is the ordinary Borel-Moore homology. 
If $V \subset X$ is a closed $T$-stable subvariety of $X$ of complex dimension $d$, 
its fundamental class $[V]_T$ is an element in $H_{2d}^T(X)$. The cap product
gives the  (total) equivariant homology 
$H_*^T(X) = \bigoplus_i H_{i}^T(X)$ a graded module structure over the equivariant
cohomology ring $H^*_T(X)$.

If $X=pt$, then $H^*_T(pt) = H^*(BT)$ is naturally identified with the 
symmetric algebra $\operatorname{Sym}\X(T)$ of the character group 
$\X(T):=\Hom(T,\mathbb C^*)$ of $T$ (written additively).
For any map $S\to T$ of tori, 
we have a natural map of algebras $H^*_T(X) \to H^*_S(X)$,
compatible with the algebra map $H^*_T(pt)\to H^*_S(pt)$ induced by $\X(T) \to\X(S)$.
Taking $S$ to be the trivial subgroup in $T$, we obtain a ring homomorphism 
$
H^*_T(X)\to H^*(X)
$.
(One can show that this map is surjective for spaces with affine pavings in the sense of
\Cref{lemma:generate} below;
we will not need this fact.)

The morphism $X_T \to BT$ that projects onto the first factor
gives the equivariant cohomology $H^*_T(X)$ the structure
of a graded algebra over $H^*_T(pt)$.
In addition, the cap product $\cap$ endows the 
equivariant homology $H_*^T(X)$ with a graded module structure over $H^*_T(X)$. 
Equivalently, there is a compatibility of cap and cup products given by
$(a \cup b) \cap c = a \cap (b \cap c)$, for $a,b \in H^*_T(X)$, $c \in H_*^T(X)$.

Each irreducible, $T$-stable, closed subvariety $Z \subset X$ of complex dimension 
$k$ has a fundamental class $[Z]_T \in H_{2k}^T(X)$. If $X$ is smooth and irreducible,
then there exists a unique (Poincar{\'e} dual) class $\eta_Z \in H^{2 (\dim X - k)}_T(X)$
such that $\eta_Z \cap [X]_T = [Z]_T$. 

Any $T$-equivariant morphism of $T$-varieties 
$f:X \to Y$ induces a degree preserving pull-back morphism of $H^*_T(pt)$-algebras 
$f^*: H^i_T(Y) \to H^i_T(X)$. 
For a point $x\in X$ fixed by the $T$ action, the inclusion $\iota_x: \{ x \} \to X$ induces a \emph{localization}
map $\iota_x^*: H^*_T(X) \to H^*_T(\{x\}) = H^*_T(pt)$.

If $f$ is proper then there is a push-forward $f_*:H_i^T(X) \to H_i^T(Y)$,
defined as follows. Let $Z \subset X$ be closed, irreducible and $T$-stable. 
Then $f_*[Z]_T = d_Z [f(Z)]_T$ if $\dim f(Z)=\dim Z$,
where $d_Z$ is the generic degree of the restriction $f:Z \to f(Z)$, and
$f_*[Z]_T=0$ if $\dim f(Z) < \dim Z$. The push-forward and pull-back are related by the usual projection
formula $f_* (f^*(a) \cap c) = a \cap f_*(c)$.

An important particular case is when $X$ 
is complete, thus $f: X \to pt$ is proper. 
For a homology class $c \in H_*^T(X)$, we denote by 
$\int_X c$ the class $f_*(c) \in H_*^T(pt)$.~
\footnote{We note that $f_*(c)$ agrees with ``integration over the fiber" when $X$ is smooth, justifying the notation.}
Recall that the equivariant homology $H^T_*(pt)$ of a point is a free $H^*_T(pt)$-module with basis $[pt]_T$.
Therefore we identify $H^*_T(pt) = H_*^T(pt)$ via the map $a\mapsto a\cap[pt]_T$.
Then we may define a pairing,
 \begin{equation}\label{E:pairingdef} \langle \cdot , \cdot \rangle : H^*_T(X) \mathop\otimes\limits_{H^*_T(pt)} 
 H_*^T(X) \to H^*_T(pt) \/; \quad 
\langle a, c \rangle := \int_X a \cap c \/. \end{equation}
We often abuse notation and for a {\em cohomology} class $a \in H^*_T(X)$ we write 
$\int_X a$ to mean $\int_X (a \cap [X]_T)$.

Following \cite[Ex~1.9.1]{fulton:IT} (see also \cite{graham:positivity}) we say that 
a $T$-variety $X$ admits a {\em $T$-stable affine paving}
if it admits a filtration $X:=X_n \supset X_{n-1} \supset \ldots$ by closed $T$-stable subvarieties
such that each $X_i \setminus X_{i-1}$ is a finite disjoint union
of $T$-invariant varieties $U_{i,j}$ isomorphic to affine spaces $\mathbb A^i$. 
The following has been proved by Graham; see \cite[Prop~2.1]{graham:positivity}. 
\begin{lemma}
\label{lemma:generate} Assume $X$ admits a $T$-stable affine paving, with cells $U_{i,j}$. 
\begin{enumerate}[label=(\alph*)]
\item
The equivariant homology $H_*^T(X)$ is a free $H^*_T(pt)$-module with 
basis $\{[\overline{U_{i,j}}]_T\}$.
\item
If $X$ is complete, the pairing from \Cref{E:pairingdef} is perfect, 
and so we may identify
$H^*_T(X) = Hom_{H^*_T(pt)}(H_*^T(X), H^*_T(pt))$.
\end{enumerate}
\end{lemma}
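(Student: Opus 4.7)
The plan is to prove (a) by induction on the length of the affine paving via the long exact sequence in equivariant Borel-Moore homology, and then to deduce (b) by running the parallel induction for equivariant cohomology and exhibiting an explicit dual basis. For each cell $U_{i,j}$, viewed as a $T$-equivariant affine space containing a $T$-fixed point, linearization of the $T$-action makes $U_{i,j}$ a $T$-representation, and the Thom isomorphism yields $H_k^T(U_{i,j}) \cong H_{k-2i}^T(pt)$, a free $H^*_T(pt)$-module of rank one concentrated in degree $2i$ and generated by the fundamental class $[U_{i,j}]_T$.

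For the inductive step in (a), I would apply the LES of the closed pair $(X_i, X_{i-1})$:
\[
\cdots \to H_k^T(X_{i-1}) \to H_k^T(X_i) \to H_k^T(X_i \setminus X_{i-1}) \to H_{k-1}^T(X_{i-1}) \to \cdots
\]
By the inductive hypothesis $H_*^T(X_{i-1})$ is free and concentrated in even degrees, while $H_*^T(X_i \setminus X_{i-1}) = \bigoplus_j H_*^T(U_{i,j})$ is free and concentrated in degree $2i$. The connecting maps therefore vanish on parity grounds, the resulting short exact sequences split because the quotients are free, and adjoining the fundamental classes $[\overline{U_{i,j}}]_T$---which restrict to the generators $[U_{i,j}]_T$ under the open restriction $H_*^T(X_i)\to H_*^T(U_{i,j})$---to the inductively constructed basis produces the desired basis of $H_*^T(X_i)$.

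For part (b), I would first run the analogous induction for equivariant cohomology, using the cohomological LES of the pair $(X_i, X_{i-1})$ together with $H^*_T(U_{i,j}) = H^*_T(pt)$, to conclude that $H^*_T(X)$ is a free $H^*_T(pt)$-module of the same total rank as $H_*^T(X)$. The pairing then induces an $H^*_T(pt)$-linear map
\[
\Phi \colon H^*_T(X) \longrightarrow \Hom_{H^*_T(pt)}\bigl(H_*^T(X), H^*_T(pt)\bigr)
\]
between free modules of equal finite rank, and it remains to exhibit cohomology classes $\sigma_{i,j}$ such that $\langle \sigma_{i,j}, [\overline{U_{k,l}}]_T\rangle$ vanishes whenever $\dim\overline{U_{k,l}} > \dim\overline{U_{i,j}}$ and is a unit of $H^*_T(pt)$ on the diagonal, so that the pairing matrix is triangular and invertible. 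The main obstacle is constructing these dual classes without Poincar\'e duality on the (generally singular) $X$; I would build each $\sigma_{i,j}$ as a lift along the cohomological LES that is supported on the closed stratum containing $\overline{U_{i,j}}$, which automatically kills pairings against higher-dimensional cell closures, and then evaluate the diagonal entries by equivariant localization at the unique $T$-fixed point of $U_{i,j}$, where the Thom isomorphism from part (a) makes the pairing transparently equal to $\pm 1$.
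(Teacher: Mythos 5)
The paper does not supply a proof of this lemma, citing instead Graham \cite[Prop.~2.1]{graham:positivity}. Your proof of (a) is correct and is the standard inductive argument via the Borel--Moore long exact sequence for the closed pair $(X_i,X_{i-1})$; it matches Graham's approach. (Minor nit: $H_*^T(U_{i,j})\cong H_{*-2i}^T(pt)$ is a free module \emph{generated} in degree $2i$ but concentrated in even degrees $\le 2i$, not ``in degree $2i$''; only evenness is needed, and your parity argument for the vanishing of the connecting maps is fine.)

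Your approach to (b), by contrast, has genuine gaps, and takes a route I do not see how to complete. The vanishing $\langle\sigma_{i,j},[\overline{U_{k,l}}]_T\rangle=0$ for $k>i$ is automatic for degree reasons (the pairing lands in $H^T_{2(k-i)}(pt)=0$), so no support argument is needed there---but that is the easy part. The hard part is the diagonal block: within a fixed dimension $i$ you must show that the \emph{entire} integer matrix $\bigl(\langle\sigma_{i,j},[\overline{U_{i,l}}]_T\rangle\bigr)_{j,l}$ is invertible, not merely that its diagonal entries are $\pm1$, and no triangularity is available within this block. Moreover, the hypothesis of the lemma does not posit isolated $T$-fixed points---a $T$-invariant cell isomorphic to $\mathbb A^i$ can have positive-dimensional fixed locus---so ``the unique $T$-fixed point of $U_{i,j}$'' need not exist; and even when it does, $\overline{U_{i,j}}$ is typically singular there, so the local contribution in a localization computation is an unknown equivariant multiplicity, not a ``transparent $\pm1$.'' Graham's proof of (b) sidesteps all of this by reducing to the non-equivariant statement: one first uses the classical fact that for a complete variety with an affine paving the cap-product map $H^*(X)\to \Hom(H_*(X),\mathbb Z)$ is an isomorphism (cf.\ \cite[Appendix~B]{fulton:young}); then, since $H^*_T(X)$ and $H_*^T(X)$ are free $H^*_T(pt)$-modules that recover $H^*(X)$ and $H_*(X)$ after tensoring down by the augmentation $H^*_T(pt)\to\mathbb Z$, the map $\Phi\colon H^*_T(X)\to\Hom_{H^*_T(pt)}(H_*^T(X),H^*_T(pt))$ is a map of free modules that is an isomorphism modulo the maximal graded ideal, hence an isomorphism by graded Nakayama. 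Replacing your construction of dual classes with this reduction would close the gap.
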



\section{Flag manifolds and Peterson Varieties}\label{sec:preliminaries}
In this section we recall some basic definitions about flag manifolds,
Schubert varieties, and Peterson varieties.
We mostly follow the setup in 
\cite{tymoczko:paving} and \cite{balibanu:peterson},
from which we will need several important results.

\subsection{Flag manifolds and Schubert varieties}
\label{sec:Schubert}
Fix a complex semisimple Lie group $G$, a Borel subgroup $B \subset G$,   $B^-\subset G$ an opposite Borel subgroup,
and  let $T:= B \cap B^-$ be a maximal torus. 
Denote by $\Delta$ the system of simple positive roots associated to $(G,B,T)$
and by $\Phi_\Delta^+ \subset \Phi_\Delta$ the set of positive roots included in the set of all roots.
The Weyl group $W:= N_G(T)/T$ is generated by simple reflections $s_i:= s_{\alpha_i}$ 
where $\alpha_i \in \Delta$.
Let $\ell: W \to \mathbb{Z}_{\ge 0}$ be the length function and $w_0$ the longest element in $W$. 
Then $B^- = w_0 B w_0$.

Any subset $I\subset \Delta$ determines a Weyl subgroup $W_I := \langle s_i: \alpha_i \in I \rangle$
and a corresponding standard parabolic subgroup $P_I$.
We denote by $w_I$ the longest element of $W_I$.
The flag manifold $G/B$ is a  smooth algebraic variety  
of complex dimension $\ell(w_0)$ with a transitive
action of $G$ given by left multiplication.
The flag manifold has a stratification into finitely many $B$-orbits, respectively $B^-$-orbits,
called the {\em Schubert cells}: $X_w^\circ:= BwB/B \simeq \mathbb{C}^{\ell(w)}$
and $X^{w,\circ}:= B^- wB/B \simeq \mathbb{C}^{\ell(w_0w)}$;
we have
\begin{equation}
\label{schubStrat}
G/B = \bigsqcup_{w \in W} X_w^\circ =  \bigsqcup_{w \in W} X^{w,\circ} \/. 
\end{equation} 
The closures $X_w:=\overline{X_w^\circ}$ and $X^w:=\overline{X^{w,\circ}}$ are called {\em Schubert varieties}  and {\em opposite Schubert varieties}, respectively.
The \emph{Bruhat order} is a partial order on $W$ characterized by inclusions of Schubert varieties  and opposite Schubert varieties. In particular, 
 $X_v \subset X_w$ if and only if $v \le w$, 
and $X^w\subset X^v$ if and only if $v\leq w$.
Following \Cref{lemma:generate},  
the homology classes
$\set{[X_v]_T}{v\leq w}$ form a basis of $H_*^T(X_w)$,
while  $\set{[X^v]_T}{w\leq v}$ form a basis of $H_*^T(X^w)$.

The cohomology classes $\sigma_v\in H_T^*(X)$ Poincar\'e dual to the $[X^v]_T$,
i.e. characterized by the equation $\sigma_v\cap[G/B]_T=[X^v]_T$, are called {\em Schubert classes}.
Note that \Cref{lemma:generate} also implies  $\set{\sigma_v}{v\in W}$ is a basis of $H_T^*(G/B)$ as a module over $H_T^*(pt)$.
Under the pairing in \cref{E:pairingdef},
the basis $\set{\sigma_v}{v\in W}$ is dual to the basis $\set{[X_v]_T}{v\in W}$,
i.e, we have 
$
\left\langle \sigma_v,[X_w]_T\right\rangle=\delta_{v,w}.
$

\subsection{The Peterson variety and Peterson cells}
\label{subsec:petersonVariety}
The Peterson variety appeared in the unpublished work of Peterson \cite{peterson:notes},
in relation to the quantum cohomology of $G/B$;
we refer the reader to \cite{kostant:flag,rietsch:totally} for details.

We recall the definition of the Peterson variety. 
Let $\mathfrak{g}:= Lie(G)$, $\mathfrak h:=Lie(T)$,
and consider the Cartan decomposition
\begin{align*}
\mathfrak{g} =\mathfrak{h} \oplus \bigoplus\limits_{\alpha \in \Phi_\Delta} \mathfrak{g}_\alpha.
\end{align*}
 For each simple root $\alpha \in \Delta$,
choose a root vector $e_\alpha \in \mathfrak{g}_\alpha$,
and let \begin{align*}e:= \sum_{\alpha\in \Delta} e_\alpha.\end{align*}
The element $e$ is a regular nilpotent element in the Lie algebra $\mathfrak b$ of $B$;
see \cite{kostant:principal} or \cite[Thm~4.1.6]{mcgovern.collingwood:nilpotent.orbits}.
Denote by $G^e \subset G$ the stabilizer of $e$ for the adjoint action of $G$ on $\mathfrak g$.
We have $G^e=(G^e)^\circ\times Z(G)$,
where $(G^e)^\circ$ is the identity component of $G^e$, and $Z(G)$ the center of $G$.
The identity component $(G^e)^\circ$ is a subgroup of the unipotent radical $U$ of $B$,
isomorphic to the affine variety $\mathbb{C}^n$,
where $n := |\Delta|$ is the number of simple roots,
i.e., the rank of $G$, cf. \cite[Cor~5.3]{kostant:principal}.
For instance, if $G = \mathrm{SL}_n(\mathbb{C})$,
then $(G^e)^\circ$ is the subgroup of upper triangular unipotent matrices with equal entries along each superdiagonal.
The Peterson variety is defined by
\begin{align}
\label{defn:Pet}
\Pet:= \overline{G^e.w_0B} \subset G/B \/.
\end{align}
This is an irreducible subvariety of $G/B$ of dimension $|\Delta|$, singular in general.

For any $\omega\in\mathfrak h$ contained in the coroot lattice,
the map $\varphi_\omega:\mathbb C\to \mathfrak h$ defined by 
$\varphi_\omega(z)= z\omega$ lifts to a cocharacter 
$exp(\varphi_\omega):\mathbb C^*\to T$. 
(Here the differential of $exp({\varphi_\omega})$ is equal to $\varphi_\omega$.
{In complex differential geometry, the map $exp({\varphi_\omega})$ intertwines with the (non-algebraic) exponential maps $exp:\mathbb C\to\mathbb C^*$ and $exp:\mathfrak h\to T$;
the cocharacter $exp({\varphi_\omega})$ is itself an algebraic map.)
This identifies the coroot lattice of $\mathfrak h$ with a subset of the cocharacters of $T$. 
See, e.g., \cite[Ch.~3,~Prop.~1.15]{vinberg} (in the algebraic setting), or \cite[p. 373-4]{fulton.harris:RTbook} (in the manifold setting).

We take $h=\sum_{\alpha\in\Phi_\Delta^+}\alpha^\vee$ to be the sum of the positive coroots, and denote by $S\subset T$ the image of the cocharacter corresponding to $h$.
Following \cite[Ch~6, Prop 29]{bourbaki:Lie46}, we have $\alpha(h)=2$ for all $\alpha\in\Delta$,
because $h$ is equal to twice the sum of the fundamental coweights.
In particular, it follows that $\alpha|S=\alpha'|S$ for any $\alpha,\alpha'\in\Delta$.
We set $t:=\alpha|S\in\X(S)\subset H^*_S(pt)$.

\begin{example}
Consider $G=SL_n$, and let $T\subset G$ be the set of diagonal matrices:
\begin{align*}
T=\set{\begin{pmatrix}z_1&0&0\\0&\ddots&0\\0&0&z_n\\\end{pmatrix}}{z_1\cdots z_n=1}.
\end{align*}
The $\alpha_i$, $1\leq i\leq n-1$, given by $\alpha_i(diag(z_1,\cdots,z_n))\mapsto z_i/z_{i+1}$,
form a set of simple roots.
The coroot $h$ corresponds to the one-dimensional subtorus $S$ given by
\begin{align*}
S=\set{\begin{pmatrix}z^{n-1}&0&0&0\\0&z^{n-3}&0&0\\0&0&\ddots&0\\0&0&0&z^{-n+1}\end{pmatrix}}{z\in\mathbb C^*}.
\end{align*}
The character $t$ of $S$ is the map given by $diag(z^{n-1},z^{n-3},\cdots,z^{-n+1})\mapsto z^2$.
\end{example}
}

\begin{rem}
The element $t$ need not be a generator of the ring $H^*_S(pt)$.
For example, if $G=SL_2$, we have
\begin{align*}
S=\set{\begin{pmatrix}z&0\\0&z^{-1}\end{pmatrix}}{z\in\mathbb C^*},
&& t:\begin{pmatrix}z&0\\0&z^{-1}\end{pmatrix}\mapsto z^2.
\end{align*}
The character group $\X(S)$ is generated by $t/2$,
which is the map 
\begin{align*}
\begin{pmatrix}z&0\\0&z^{-1}\end{pmatrix}\mapsto z.
\end{align*}
However, we always have 
either $H^*_S(pt)=\mathbb Z[t]$, or $H^*_S(pt)=\mathbb Z[t/2]$.
\end{rem}

 Since $[h,e_\alpha]=2e_\alpha$ for each simple root $\alpha$, we have $[h, e] = 2e$,  from which we observe that
$S$ normalizes $G^e$,
cf. \cite[Theorem 10]{kostant:flag},
resulting in 
an action of  the semidirect product $S \ltimes G^e$
on the Peterson variety. 

The following was proved in classical types by Tymoczko \cite[Thm~4.3]{tymoczko:paving}
and generalized to all Lie types by Precup \cite{precup}.

\begin{prop}\label{prop:vectorSpace}
For $I \subset \Delta$, let $w_I$ denote the longest element in the Weyl subgroup $W_I$.
\begin{enumerate}[label=(\alph*),leftmargin=*]
\item
The intersection $\Pet \cap BwB/B$ is nonempty if and only if $w=w_I$ for some subset $I\subset \Delta$.
\item 
The  set theoretic intersection $\Pet_I^\circ:=\Pet \cap Bw_IB/B$ is an affine space of dimension $|I|$.
In particular, its closure $\Pet_I$ is an irreducible subvariety of $X_{w_I}$.
\end{enumerate}
\end{prop}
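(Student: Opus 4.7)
The plan is to work with the alternative description $\Pet = \{gB \in G/B : \mathrm{Ad}(g^{-1})e \in H\}$, where $H = \mathfrak{b} \oplus \bigoplus_{\alpha \in \Delta}\mathfrak{g}_{-\alpha}$ is the Hessenberg space of equation \eqref{eq:seconddefinition}, and to reduce both parts of the proposition to linear-algebraic computations in $\mathfrak{g}$ via parametrizations of Schubert cells.

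First I would identify the $S$-fixed points of $\Pet$. Since the cocharacter $h$ is regular ($\alpha(h) \ne 0$ for every root), $(G/B)^S = (G/B)^T = W$, and hence $\Pet^S = \{wB : wB \in \Pet\}$. Applying the Hessenberg condition to $g = \dot{w}$ and using (after a harmless $T$-conjugation of $e$) that $e$ may be taken to be $\sum_{\alpha \in \Delta}e_\alpha$, I obtain the combinatorial condition $w^{-1}\alpha \in \Phi^+ \cup (-\Delta)$ for every $\alpha \in \Delta$. Setting $I := \{\alpha \in \Delta : w^{-1}\alpha < 0\}$, the implication $w = w_I \Rightarrow$ condition uses that $w_I$ acts on $\Phi_I$ as the longest element of $W_I$ (sending $I$ bijectively to $-I$ up to a Dynkin involution) and permutes $\Phi^+ \setminus \Phi_I^+$ within itself. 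For the converse one shows $w \in W_J$ for $J = \mathrm{supp}(w)$, and by induction on $|J|$ concludes that $w$ is the longest element of $W_J$, yielding $J = I$ and $w = w_I$.

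To deduce part (a) I would invoke a Bia{\l}ynicki--Birula argument. The $S$-action on $G/B$ coming from the regular dominant cocharacter $h$ has plus cells (attracting loci under $t \to 0$) coinciding with the Schubert cells $BwB/B$. Since $\Pet$ is projective and $S$-stable, $\lim_{t \to 0} t \cdot x$ exists in $\Pet$ for every $x \in \Pet$ and is an $S$-fixed point, necessarily some $w_I B$. Hence $x \in Bw_I B/B$, so $\Pet \subseteq \bigsqcup_I Bw_I B/B$, giving $\Pet \cap BwB/B = \emptyset$ whenever $w$ is not of the form $w_I$.

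For part (b), parametrize $Bw_I B/B$ by $U_{w_I} \cong \mathbb{A}^{|\Phi_I^+|}$ with root coordinates $(x_\alpha)_{\alpha \in \Phi_I^+}$. The condition $u\dot{w}_I B \in \Pet$ is $\mathrm{Ad}(\dot{w}_I^{-1}u^{-1})e \in H$, which, after expanding in root coordinates, yields polynomial equations indexed by the ``forbidden'' root spaces $\mathfrak{g}_\gamma$ with $\gamma \in \Phi^- \setminus (-\Delta)$. Ordering $\Phi_I^+$ by height, I would show these equations have a triangular form: for each non-simple $\alpha \in \Phi_I^+ \setminus I$ there is exactly one equation pinning down $x_\alpha$ as a polynomial in coordinates of strictly smaller height, while the $|I|$ coordinates $(x_\alpha)_{\alpha \in I}$ remain free. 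This identifies $\Pet_I^\circ$ with the graph of this map, hence with $\mathbb{A}^{|I|}$, and irreducibility of $\Pet_I = \overline{\Pet_I^\circ} \subseteq X_{w_I}$ follows at once. This triangularisation is the main obstacle: one must verify via careful commutator bookkeeping and the structure constants of $\mathfrak{g}$ that precisely one equation arises for each non-simple root of $\Phi_I^+$, an argument in which the specific shape of $H$ (enlarging $\mathfrak{b}$ by only the simple negative root spaces) is essential. This computation is the technical core of Tymoczko's original paving in type A and of Precup's extension to arbitrary Lie types.
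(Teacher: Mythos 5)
Your proposal takes a genuinely different route from the paper. For part (a) you argue via the Bia{\l}ynicki--Birula decomposition for the regular dominant cocharacter $h$: this is correct and in fact cleaner than the paper's treatment, where part (a) is derived in \cref{prop:paving} as a consequence of the affine paving rather than established independently. (Your identification of $\Pet^S$ with $\{w_I B\}_{I\subset\Delta}$ is also the right combinatorial statement, but the converse direction --- that $w^{-1}\alpha \in \Phi^+\cup(-\Delta)$ for all $\alpha \in\Delta$ forces $w = w_I$ --- is only sketched by an unspecified induction; Precup proves this lemma carefully, and it is not as immediate as the sketch suggests.) For part (b), the paper does \emph{not} redo the Tymoczko--Precup root-coordinate computation. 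Instead (see \cref{prop:paving}) it invokes B{\u a}libanu's result that $\Pet_I^\circ = A_I\cdot w_I B/B$, where $A_I$ is the centralizer of $e_I$ in $U_I$: since $A_I\cong\mathbb{C}^{|I|}$ by Kostant, and $A_I\subset U_I$ acts freely because $U_I$ acts freely on the Schubert cell $X_{w_I}^\circ$, the cell $\Pet_I^\circ$ is an $A_I$-torsor and hence an affine space of dimension $|I|$. That is a genuine shortcut, trading the commutator bookkeeping you describe for group theory.

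The gap in your proposal is exactly the one you flag yourself: for part (b) the triangular structure of the Hessenberg equations is asserted, not proved. Your dimension count is right --- the equations come from the components of $\mathrm{Ad}(\dot w_I^{-1}u^{-1})e$ in $\mathfrak{g}_\gamma$ for $\gamma \in \Phi_I^-\setminus(-I)$, so there are $|\Phi_I^+|-|I|$ of them --- but the claim that there is exactly one equation solving for each non-simple coordinate $x_\alpha$ in terms of coordinates of strictly lower height requires a delicate analysis of structure constants, and that verification is the entire content of the theorems of Tymoczko and Precup that the proposition is quoting. As written, the proposal is a correct outline of an existing proof rather than a self-contained one; the paper's own argument is specifically designed to bypass this computation.
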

Some of the details proving part (b) are implicit in \cite{balibanu:peterson}.
We take the opportunity to make these details explicit in \cref{prop:paving} below.
We will refer to $\Pet_I^\circ$ as a \emph{Peterson cell}; its closure 
$\Pet_I \subset X_{w_I}$ is an irreducible variety, 
and the Schubert cell decomposition of Schubert varieties yields an affine paving
\[\Pet_I = \bigsqcup_{J \subset I}\Pet_J^\circ \/. \] 
Following \cite[Prop 2.1(a)]{graham:positivity},
the classes $\set{[\Pet_I]_S}{I\subset\Delta}$ form a basis of $H_*^S(\Pet)$.
Observe that $S \subset T$ is a regular subtorus,
thus the fixed point loci for $S$ and $T$ in $G/B$ coincide, i.e.,
$(G/B)^T = (G/B)^S$; see e.g. \cite[\S 24.2, \S 24.3]{humphreys:linear}.
It follows that
\[\Pet^S = (G/B)^S \cap\Pet = (G/B)^T\cap\Pet= \{ w_I: I \subset \Delta \} \/, \]
where we utilize the usual identification $(G/B)^T = W$.

For $I \subset \Delta$, an element $v\in W$ is called a {\em Coxeter element} for $I$
if $v=s_{\alpha_1}\cdots s_{\alpha_k}$ for some enumeration $\alpha_1,\ldots,\alpha_k$ of $I$.
Recall the following result, cf.~\cite[Lemma 7]{insko.tymoczko:intersection.theory}:

\begin{prop}
\label{onePointIntersection}
Let $v_I$ be a Coxeter element for some subset $I\subset \Delta$.
Then the intersection $X^{v_I}\cap\Pet_I$ is the single (possibly non-reduced) point $w_I$.
\end{prop}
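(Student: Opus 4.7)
My plan is to first confirm that $w_I$ lies in both $X^{v_I}$ and $\Pet_I$, then to show that set-theoretically it is the only point of the intersection; the phrase ``possibly non-reduced'' then refers to the scheme-theoretic intersection multiplicity at $w_I$. For membership, recall from \cref{subsec:petersonVariety} that $\Pet^S=\{w_J:J\subset\Delta\}$ and that $w_I$ is the unique $T$-fixed point of the Schubert cell $Bw_IB/B$, so $w_I\in\Pet\cap Bw_IB/B=\Pet_I^\circ\subset\Pet_I$. Since $v_I$ is a Coxeter element of $W_I$ of length $|I|$ and $w_I$ is the longest element of $W_I$, we have $v_I\leq w_I$ in Bruhat order, and hence $w_I\in X^{w_I}\subset X^{v_I}$. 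A dimension count $\dim X^{v_I}+\dim\Pet_I=(\ell(w_0)-|I|)+|I|=\ell(w_0)$ is consistent with the expected intersection dimension being zero.

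Next, I would localize the intersection to the top Peterson cell by invoking the affine paving $\Pet_I=\bigsqcup_{J\subset I}\Pet_J^\circ$ with $\Pet_J^\circ\subset Bw_JB/B$. The intersection $X^{v_I}\cap Bw_JB/B$ is nonempty only if $v_I\leq w_J$ in Bruhat order, and passing to supports of reduced expressions then yields $I=\operatorname{supp}(v_I)\subset\operatorname{supp}(w_J)=J$; combined with $J\subset I$ from the paving this forces $J=I$. Therefore every point of $X^{v_I}\cap\Pet_I$ already lies in $\Pet_I^\circ$, and in particular its $S$-fixed locus is contained in $(G/B)^S\cap\Pet_I^\circ=\{w_I\}$.

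Finally, I would exploit the $S$-action to exclude higher-dimensional components. The subvariety $X^{v_I}=\overline{B^-v_IB/B}$ is $B^-$-invariant, hence $S$-invariant since $S\subset T\subset B^-$; and $\Pet_I$ is $S$-invariant by construction. Thus $Z:=X^{v_I}\cap\Pet_I$ is a projective $S$-invariant subscheme with $Z^S=\{w_I\}$. Suppose $Z$ had an irreducible component $C$ of positive dimension. The action of $S\cong\mathbb{C}^*$ on $C$ cannot be trivial, as otherwise $C\subset(G/B)^S$ would contradict finiteness of $(G/B)^S=W$; picking $x\in C$ with one-dimensional $S$-orbit, the closure of $S\cdot x$ in $C$ is a complete $S$-stable curve whose limits at $t\to 0$ and $t\to\infty$ are two distinct $S$-fixed points of $C$, contradicting $Z^S=\{w_I\}$. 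Hence $Z$ is zero-dimensional and reduces set-theoretically to $\{w_I\}$.

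\textbf{Main obstacle.} The critical step is ruling out positive-dimensional components of $Z$; everything else, namely the Bruhat-support trick, the Peterson paving, and the identification of $\Pet^S$, is already packaged in the results recalled in \cref{sec:preliminaries}. The $\mathbb{C}^*$-orbit closure argument is standard but must be applied component by component, and one must verify that both varieties in the intersection are genuinely $S$-stable (which is immediate once one notes $S\subset B^-$).
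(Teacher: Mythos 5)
Your proof is correct and runs on essentially the same two ideas as the paper's: a Bruhat-order/support argument to pin down the $S$-fixed locus of $Y:=X^{v_I}\cap\Pet_I$ as $\{w_I\}$, followed by the standard fact that a complete $S$-stable variety with a single fixed point is that point (your explicit $\mathbb{C}^*$-orbit-closure argument is exactly what the paper's citation to Borel, Prop.~13.5, packages). One small remark: your middle paragraph actually proves something a bit stronger than the paper needs, namely that the whole set-theoretic intersection, not just its fixed locus, lies in the affine cell $\Pet_I^\circ$; had you kept that observation you could have concluded more cheaply, since $Y$ is then a complete subvariety of an affine space and hence finite, and a finite $S$-stable set consists of fixed points, giving $Y=Y^S=\{w_I\}$ without invoking orbit closures at all. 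The paper instead goes directly through $Y^S\subset\Pet_I^S=\{w_J:J\subset I\}$ and the implication $w_J\geq v_I\Rightarrow I\subset J$, which is the compressed form of your support argument.
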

\begin{proof}
The intersection $Y:=X^{v_I}\cap\Pet_I$ is proper and $S$-stable.
Any fixed point in $Y^S\subset \Pet_I^S$ 
is of the form $w_J$, for some $J\subset I$.
On the other hand, 
since $w_J \in X^{v_I}$, we have $w_J \ge v_I$. 
Since $v_I$ is a Coxeter element for $W_I$,
$I \subset J$, and so $I=J$.
Thus $Y$ contains a unique $S$-fixed point;
hence by \cite[Prop~13.5]{borel:linear}, we have $Y=\{w_I\}$.
\end{proof}

\begin{cor}
\label{intDual}
Let $\eta_I\in H^*_S(G/B)$ be the Poincar\'e dual of $[\Pet_I]_S \in H_*^S(G/B)$,
$v_I$ a Coxeter element for $I$,
and $\tau_{w_I}$ the Poincar\'e dual of the point class $[w_I]_S$.
Then
\begin{align*}
&&\sigma_{v_I}\cup\eta_I = m(v_I) \tau_{w_I}
&&\text{and}&&
\int_{G/B}\sigma_{v_I}\cup\eta_I = m(v_I),&&
\end{align*}
where $m(v_I)$ is the multiplicity of $w_I$ in the intersection $X^{v_I} \cap \Pet_I$.
\end{cor}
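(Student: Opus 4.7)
The plan is to identify both sides of $\sigma_{v_I}\cup\eta_I=m(v_I)\,\tau_{w_I}$ as Poincar\'e duals of intersection cycles, using that the intersection $X^{v_I}\cap\Pet_I$ is proper. The subvariety $X^{v_I}$ has codimension $\ell(v_I)=|I|$ and $\Pet_I$ has dimension $|I|$ by \cref{prop:vectorSpace}; both are $S$-invariant, with complementary dimensions in the smooth variety $G/B$. By \cref{onePointIntersection}, the intersection $X^{v_I}\cap\Pet_I$ is set-theoretically $\{w_I\}$, hence scheme-theoretically it is a zero-dimensional subscheme supported at the $S$-fixed point $w_I$; in particular the intersection is proper.

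Because the two $S$-invariant subvarieties meet properly in a smooth ambient variety, the equivariant intersection product of their fundamental classes is well-defined and equals the scheme-theoretic intersection cycle weighted by local multiplicities:
\[
[X^{v_I}]_S\cdot [\Pet_I]_S \;=\; m(v_I)\,[w_I]_S \;\in\; H_0^S(G/B),
\]
where $m(v_I)=\dim_{\mathbb{C}}\mathcal{O}_{X^{v_I}\cap\Pet_I,\,w_I}$ is the intersection multiplicity at $w_I$, matching the definition in the statement. Poincar\'e duality on the smooth variety $G/B$ intertwines cup product with the intersection product of equivariant cycles; dualizing the above identity then yields the first claim $\sigma_{v_I}\cup\eta_I=m(v_I)\,\tau_{w_I}$. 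For the integrated version, pushing forward to a point gives $\int_{G/B}\tau_{w_I}=[\mathrm{pt}]_S=1$ in $H^*_S(pt)$ under the identification from \cref{se:cohomology} (as $w_I$ is a reduced $S$-fixed point), and hence $\int_{G/B}\sigma_{v_I}\cup\eta_I=m(v_I)$.

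The main technical point to verify is the equivariant intersection product formula for proper intersections. In the Borel-Moore framework of \cref{se:cohomology}, this can be justified by passing to the finite-dimensional approximations $X_{S,n}=(ES_n\times X)/S$: the cycles $(X^{v_I})_{S,n}$ and $(\Pet_I)_{S,n}$ meet properly along the smooth $n$-dimensional subvariety $(\{w_I\})_{S,n}$ inside $(G/B)_{S,n}$, with the same local multiplicity $m(v_I)$, and the identity reduces to classical intersection theory on a smooth complex variety. Equivalently, one may work in equivariant Chow groups following Edidin-Graham and appeal to the cycle map to Borel-Moore homology mentioned in \cref{se:cohomology}.
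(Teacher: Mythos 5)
Your proof is correct, and it takes a genuinely different route from the paper. The paper proves $\sigma_{v_I}\cup\eta_I = m(v_I)\tau_{w_I}$ by equivariant localization: since $H^*_S(G/B)$ is torsion-free, restriction to the $S$-fixed points $\bigoplus_{w\in W} H^*_S(w)$ is injective; by \cref{onePointIntersection} the class $\sigma_{v_I}\cup\eta_I$ restricts to zero at every fixed point other than $w_I$, which forces it to be an integer multiple of $\tau_{w_I}$; the integer is then identified as the intersection multiplicity by specializing to non-equivariant cohomology and appealing to the refined intersection formula in Fulton. You instead produce the identity directly at the cycle level, reading it off as the Poincar\'e dual of a proper equivariant intersection, with the equivariant proper-intersection formula justified by passing to finite-dimensional Borel approximations (or, equivalently, to Edidin--Graham equivariant Chow groups). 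Your route avoids the localization theorem and the non-equivariant specialization step altogether, at the cost of invoking equivariant proper intersection theory; the paper's route avoids the latter by exploiting machinery already set up in \cref{se:cohomology}. One small point you should make explicit: identifying the intersection multiplicity with $\dim_{\mathbb{C}}\mathcal{O}_{X^{v_I}\cap\Pet_I,\,w_I}$ is valid here because the intersection is proper of complementary dimensions in the smooth $G/B$ and one of the two factors (the Schubert variety $X^{v_I}$) is Cohen--Macaulay; in general the intersection multiplicity is given by Serre's alternating-sum Tor formula, which need not agree with the length of the local ring of the scheme-theoretic intersection.
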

\begin{proof}
Observe from \cref{lemma:generate} that $H^*_S(G/B)$ is torsion-free,
and hence the localization map 
$H^*_S(G/B)\to\mathop\oplus\limits_{w\in W} H^*_S(w)$
is injective (over $\mathbb Z$);
see \cite[Cor 1.3.2, Thm 1.6.2]{GKM} and \cite[Thm 3.1]{hsiang}.
By \cref{onePointIntersection}, the only potentially non-zero localization of $\sigma_{v_I}\cup\eta_I$ is at $w_I$, and
therefore $\sigma_{v_I}\cup\eta_I=m(v_I) \tau_{w_I}$ for some integer $m(v_I)$.
Under the specialization $H^*_S(G/B)\to H^*(G/B)$,
the class $\tau_{w_I}$ maps to $1\in H^*(G/B)$.
It now follows from \cite[Eq~(31)]{fulton:young}
that $m(v_I)$ is the multiplicity of the intersection $X^{v_I} \cap \Pet_I$.
\end{proof} 
In \cref{sec:intmult}, we provide
a formula for $m(v_I)$ based on equivariant localization,
and compute the value of $m(v_I)$ for certain Coxeter elements $v_I$.

\section{Poincar{\'e} duality and consequences}
\label{se:duality}
Let $G$ be a complex semisimple group,
and $\iota:\Pet \hookrightarrow G/B$ the corresponding Peterson variety,
as in \cref{sec:preliminaries}.
In \cref{thm:duals}, we construct a basis $\{p_I\}_{I\subset\Delta}$ of $H^*_S(\Pet)$
dual (up to scaling) to the basis $\{[\Pet_I]_S\}_{I\subset\Delta}$ of $H_*^S(\Pet)$.
\Cref{thm:duals} relates the  Schubert expansion of a Peterson class $[\Pet_I]_S$
to the expansion in the $\{p_I\}$ basis of the pull-backs $\iota^*\sigma_w$;
the latter can computed using equivariant localization and Gaussian elimination.
We sketch an example in \cref{schubertExpansion}.

\subsection{Peterson classes and duality}
\begin{lemma}
\label{lemma:order}
Let $I \subset \Delta$, and consider the expansion
\begin{align*}
\iota_*[\Pet_I]_S = \sum_{v\in W} c_I^v\, [X_v]_S \quad \in H_*^S(G/B). 
\end{align*}
Then $c_I^v =0$ unless $v \le w_I$.
\end{lemma}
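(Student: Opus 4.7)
The plan is to exploit the containment $\Pet_I \subset X_{w_I}$, which is given as the final assertion of \Cref{prop:vectorSpace}(b). This containment lets us factor the inclusion $\iota:\Pet \hookrightarrow G/B$ (restricted to $\Pet_I$) through the Schubert variety $X_{w_I}$, which gives the desired support condition almost for free.

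Concretely, I would first observe that the closed embedding $\iota|_{\Pet_I}: \Pet_I \hookrightarrow G/B$ factors as
\[
\Pet_I \hookrightarrow X_{w_I} \xhookrightarrow{\;j\;} G/B,
\]
so that $\iota_*[\Pet_I]_S = j_*\bigl(\alpha\bigr)$ for some class $\alpha \in H_*^S(X_{w_I})$. Next I would invoke \Cref{lemma:generate}(a) applied to the Schubert variety $X_{w_I}$: the Schubert cells $X_v^\circ$ with $v \le w_I$ give a $T$-stable (hence $S$-stable, since $S \subset T$) affine paving of $X_{w_I}$, so the fundamental classes $\{[X_v]_S : v \le w_I\}$ form a free $H^*_S(pt)$-basis of $H_*^S(X_{w_I})$. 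Writing $\alpha = \sum_{v\le w_I} c_I^v [X_v]_S$ and pushing forward via $j_*$ (which sends $[X_v]_S \in H_*^S(X_{w_I})$ to $[X_v]_S \in H_*^S(G/B)$, since $j$ is a closed embedding restricting to an isomorphism on each $X_v$) yields the claimed expansion with all other coefficients zero.

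There is essentially no obstacle here beyond checking the bookkeeping: the only thing worth being careful about is that the ``affine paving'' hypothesis in \Cref{lemma:generate} is applied with respect to the one-dimensional torus $S$ rather than the full torus $T$ used in the ambient statement, but this is automatic because every $T$-stable subvariety of $G/B$ is a fortiori $S$-stable. Once that is noted, the argument is complete in just a few lines.
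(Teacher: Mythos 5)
Your proof is correct and follows essentially the same route as the paper's: use $\Pet_I \subset X_{w_I}$ from \Cref{prop:vectorSpace}(b) together with \Cref{lemma:generate}(a) applied to the affine paving of $X_{w_I}$, so that $\iota_*[\Pet_I]_S$ is supported on the basis $\{[X_v]_S : v \le w_I\}$. You have merely made explicit a couple of steps the paper leaves implicit (the factorization through $j:X_{w_I}\hookrightarrow G/B$, and the $T$-stable-implies-$S$-stable remark), which is fine.
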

\begin{proof} By \Cref{lemma:generate}, the equivariant homology 
$H_*^S(X_{w_I})$ has a $H^*_S(pt)$-basis
given by the fundamental classes $[X_v]_S$, where $v \le w_I$.
Since $\Pet_I$ is a subvariety of $X_{w_I}$, 
we have $\iota_*[\Pet_I]_S = \sum\limits_{v\leq w_I} c_I^v\, [X_v]_S$,
for some $c_I^v\in H^*_S(pt)$.
\end{proof}

\begin{lemma}
\label{prop:nonvan}
Let $I \subset \Delta$, and consider the expansion
\begin{align*}
\iota_*[\Pet_I]_S = \sum_{v\in W} c_I^v\, [X_v]_S \quad \in H_*^S(G/B).
\end{align*}
If $v$ is a Coxeter element for $J\neq I$, then $c_I^v=0$.
\end{lemma}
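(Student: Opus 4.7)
The plan is to combine two complementary obstructions on $c_I^{v_J}$, so that together they force $J = I$ whenever $c_I^{v_J} \neq 0$; contrapositively, $J \neq I$ will yield $c_I^{v_J} = 0$.

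The first obstruction is a Bruhat-order constraint already supplied by \cref{lemma:order}: if $c_I^{v_J} \neq 0$ then $v_J \le w_I$. I will then invoke the subword property of Bruhat order. Every reduced expression of $w_I$ uses only simple reflections indexed by $I$, while every reduced expression of a Coxeter element $v_J$ is a permutation of the simple reflections indexed by $J$. Consequently $v_J \le w_I$ forces $J \subseteq I$.

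The second obstruction is a grading/degree argument. The expansion $\iota_*[\Pet_I]_S = \sum_v c_I^v [X_v]_S$ is a homogeneous equality living in $H_{2|I|}^S(G/B)$, and since $[X_v]_S \in H_{2\ell(v)}^S(G/B)$ the coefficient $c_I^v$ must sit in cohomological degree $2(\ell(v) - |I|)$ of $H^*_S(pt)$. Because $H^k_S(pt) = 0$ for $k < 0$, the coefficient $c_I^v$ vanishes whenever $\ell(v) < |I|$. For a Coxeter element $v_J$ with $\ell(v_J) = |J|$, this forces $|J| \ge |I|$ as soon as $c_I^{v_J} \neq 0$.

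Combining $J \subseteq I$ with $|J| \ge |I|$ forces $J = I$, which is exactly the contrapositive of the claimed vanishing. I do not anticipate a serious obstacle here; both ingredients are essentially formal. The only subtle points are the uniqueness and grading of the Schubert expansion, both guaranteed by \cref{lemma:generate}, and the routine subword characterization of $v_J \le w_I$ for a Coxeter element and the longest element of a parabolic subgroup.
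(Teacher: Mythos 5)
Your proposal is correct and follows essentially the same route as the paper: use \cref{lemma:order} plus the subword property to get $J \subseteq I$, then a degree/homogeneity argument to get $|J| \ge |I|$, hence $J = I$. You merely spell out the two steps (the subword characterization of $v_J \le w_I$ and the vanishing of $H^*_S(pt)$ in negative degrees) that the paper leaves implicit.
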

\begin{proof}
Suppose $v$ is a Coxeter element for some subset $J\subset\Delta$ for which $c_I^v\neq 0$.
Following \Cref{lemma:order}, we have $v\le w_I$, hence $J\subset I$.
On the other hand, since the expansion is homogeneous,
we have $|J|=\ell(v) \ge \dim \Pet_I = |I|$, and hence $J=I$.
\end{proof}

\begin{thm}[Duality Theorem]
\label[theorem]{thm:duals}
Let $I, J$ be subsets of the set of simple roots $\Delta$,
and let $v_I$ be a Coxeter element for $I$.
We have 
\begin{align*}
\langle \iota^*\sigma_{v_I}, [\Pet_J]_S \rangle = m(v_I) \delta_{I,J},
\end{align*}
where $m(v_I)$ is the multiplicity of the (unique) intersection point of $X^{v_I} \cap \Pet_I$.  In particular, $m(v_I)$ is a positive integer.
\end{thm}
\begin{proof}
Consider the Schubert expansion $\iota_*[\Pet_J]_S = \sum c_J^v [X_v]_S$.
Then 
\begin{align*}
\langle \iota^*\sigma_{v_I}, [\Pet_J]_S \rangle= \langle \sigma_{v_I}, \iota_* [\Pet_J]_S \rangle= c_J^{v_I},
\end{align*}
since the set $\{\sigma_v\}_{v\in W}$ forms a dual basis to the fundamental classes $\{[X_v]_S\}_{v\in W}$.
It follows from from \Cref{prop:nonvan} that $c_J^{v_I}=0$ for $I\neq J$.
For $J=I$, \cref{intDual} implies
\begin{align*}
c_I^{v_I}=\langle \sigma_{v_I}, \iota_* [\Pet_I]_S \rangle=\int_X\sigma_{v_I}\cup\eta_I=m(v_I)> 0.
\end{align*}
Finally, $m(v_I) \in \mathbb{Z}_+$ because the pairing~\eqref{E:pairingdef} has values in integral cohomology.
\end{proof}

We record the following consequence of the Duality theorem.

\begin{cor}
\label{prop:Petbasis}
For each $I\subset\Delta$, fix a Coxeter element $v_I$,
and set $p_I:= \iota^*\sigma_{v_I}  \in H^*_S(\Pet)$.
\begin{enumerate}[label=(\alph*)]
\item
The classes $\set{\dfrac{p_I}{m(v_I)}\in H^*_S(\Pet)}{I\subset\Delta}$ form a $H^*_S(pt)$-basis of $H^*_S(\Pet)$.
\item
The map $\iota_*:H_*^S(\Pet)\to H_*^S(G/B)$ is injective.
\end{enumerate}
\end{cor}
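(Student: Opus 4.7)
The plan is to extract both parts of the corollary directly from the Duality Theorem (\cref{thm:duals}) combined with \cref{lemma:generate}(b). Since $\Pet$ is a closed subvariety of the projective variety $G/B$ it is complete, and it admits the $S$-stable affine paving by Peterson cells from \cref{prop:vectorSpace}. Hence the hypotheses of \cref{lemma:generate} are satisfied for $X=\Pet$.

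For part (a), I first apply \cref{lemma:generate}(b) to deduce that the pairing $\langle\cdot,\cdot\rangle:H^*_S(\Pet)\otimes_{H^*_S(pt)}H_*^S(\Pet)\to H^*_S(pt)$ is perfect, and that $\{[\Pet_J]_S\}_{J\subset\Delta}$ is an $H^*_S(pt)$-basis of $H_*^S(\Pet)$. Perfectness then supplies a unique dual basis $\{q_I\}_{I\subset\Delta}\subset H^*_S(\Pet)$ characterized by $\langle q_I,[\Pet_J]_S\rangle=\delta_{I,J}$. The Duality Theorem yields $\langle p_I,[\Pet_J]_S\rangle=m(v_I)\delta_{I,J}$, so $p_I=m(v_I)\,q_I$ in $H^*_S(\Pet)$. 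In particular $p_I$ is genuinely divisible by the positive integer $m(v_I)$ in the integral ring $H^*_S(\Pet)$, the class $p_I/m(v_I)=q_I$ is well-defined and independent of the choice of Coxeter element $v_I$, and as a dual basis $\{p_I/m(v_I)\}_{I\subset\Delta}$ is an $H^*_S(pt)$-basis of $H^*_S(\Pet)$.

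For part (b), suppose $c\in H_*^S(\Pet)$ lies in the kernel of $\iota_*$. Expand $c=\sum_J a_J[\Pet_J]_S$ with $a_J\in H^*_S(pt)$. For any $I\subset\Delta$ and any Coxeter element $v_I$, the projection formula together with the Duality Theorem gives
\[ 0 \;=\; \langle\sigma_{v_I},\iota_*c\rangle \;=\; \langle\iota^*\sigma_{v_I},c\rangle \;=\; \langle p_I,c\rangle \;=\; \sum_J a_J\,\langle p_I,[\Pet_J]_S\rangle \;=\; m(v_I)\,a_I. \]
Since $m(v_I)$ is a nonzero integer and $H^*_S(pt)\cong\operatorname{Sym}\X(S)$ is torsion-free, $a_I=0$ for every $I$; hence $c=0$ and $\iota_*$ is injective.

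There is essentially no obstacle here: the argument is formal once the Duality Theorem is in hand. The only point that merits explicit attention is that $p_I$ is integrally divisible by $m(v_I)$ in $H^*_S(\Pet)$ (not merely after inverting $m(v_I)$), which is why one extracts the integral dual basis $\{q_I\}$ from the perfect pairing rather than dividing $p_I$ by $m(v_I)$ by hand.
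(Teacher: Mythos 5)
Your argument is correct and follows essentially the same route as the paper: part (a) is obtained by combining the Duality Theorem with the perfect pairing from \cref{lemma:generate}(b), and part (b) by pairing a putative kernel element against the $\sigma_{v_I}$ via the projection formula. The one place you are more explicit than the paper — observing that $p_I=m(v_I)q_I$ with $q_I$ the integral dual basis element, so that $p_I/m(v_I)$ is genuinely a class in $H^*_S(\Pet)$ rather than just in $H^*_S(\Pet)\otimes\IQ$ — is a worthwhile clarification of a point the paper leaves implicit.
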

\begin{proof}
By \cref{thm:duals}, the classes $\frac{p_I}{m(v_I)}$ are dual to the classes $[\Pet_I]_S$,
and part (a) follows from \Cref{lemma:generate}.
For part (b), observe that the pairing
\begin{align*}
\left\langle\sigma_{v_J},\iota_*[\Pet_I]_S\right\rangle  = m(v_I) \delta_{I,J},
\end{align*}
along with the linear independence of the ${\sigma_{v_J}}$ in $H_S^*(G/B)$,
implies that the ${\iota_*[\Pet_I]_S}$ are linearly independent.
It follows that the map $\iota_*:H_*^S(\Pet)\to H_*^S(G/B)$ is injective.
\end{proof}
\begin{rem}
Part (a) of \Cref{prop:Petbasis} was proved in various generalities,
and for particular choices of Coxeter elements $v_I$,
in \cite{drellich:monk,insko.tymoczko:intersection.theory,abe.horiguchi.kuwata.zeng:left-right}.
The non-equivariant version of part (b) was proved in \cite[Thm~2]{insko.tymoczko:intersection.theory}.
\end{rem}

We also record the following immediate corollary,
which will be utilized in the proof of the positivity statement \cref{thm:hompos}.

\begin{cor}
\label[corollary]{thm:expansions}
For each $I\subset\Delta$, fix a Coxeter element $v_I$,
and set $p_I:= \iota^*\sigma_{v_I}  \in H^*_S(\Pet)$.
Consider the expansions
\begin{align*}
\iota^*\sigma_w   = \sum_{J\subset\Delta} b_w^J\, p_J,&& 
\iota_*[\Pet_I]_S = \sum\limits_{u\in W} c_I^u\, [X_u]_S \/.
\end{align*}
Then $c_I^u = m(v_I) b_u^I$~for all $u$, where $m(v_I)>0$ is the coefficient from the Duality \Cref{thm:duals}.
\end{cor}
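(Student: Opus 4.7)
The plan is to extract the identity $c_I^u = m(v_I)\, b_u^I$ by evaluating a single pairing in two different ways, one using the cohomological expansion of $\iota^*\sigma_u$ in the basis $\{p_J\}$, and the other using the homological Schubert expansion of $\iota_*[\Pet_I]_S$. The Duality Theorem (\cref{thm:duals}) and the standard Schubert duality in $G/B$ will each collapse one of these expansions to a single term, and comparing the two answers yields the claim.

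Concretely, for arbitrary $u \in W$ and $I\subset\Delta$, I would compute the bracket $\langle \iota^*\sigma_u,\, [\Pet_I]_S\rangle \in H^*_S(pt)$. On the one hand, substituting the cohomological expansion $\iota^*\sigma_u = \sum_J b_u^J\, p_J$, $H^*_S(pt)$-linearity of the pairing together with \cref{thm:duals} yields
\[
\langle \iota^*\sigma_u,[\Pet_I]_S\rangle \;=\; \sum_{J\subset\Delta} b_u^J\, \langle p_J,[\Pet_I]_S\rangle \;=\; \sum_{J\subset\Delta} b_u^J\, m(v_J)\, \delta_{I,J} \;=\; m(v_I)\, b_u^I.
\]
On the other hand, by the projection formula (valid since $\iota$ is proper, as $\Pet$ is closed in $G/B$), the pairing can be pushed forward to $G/B$:
\[
\langle \iota^*\sigma_u,[\Pet_I]_S\rangle \;=\; \langle \sigma_u, \iota_*[\Pet_I]_S\rangle \;=\; \sum_{v\in W} c_I^v\, \langle \sigma_u, [X_v]_S\rangle \;=\; c_I^u,
\]
using that $\{\sigma_v\}$ is the dual basis to $\{[X_v]_S\}$ in $G/B$, as recalled at the end of \cref{sec:Schubert}. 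Equating the two expressions gives $c_I^u = m(v_I)\, b_u^I$ for every $u$ and $I$.

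There is essentially no obstacle here; the entire argument is a bookkeeping exercise combining the two dualities, and the only point requiring care is invoking the projection formula in equivariant Borel-Moore homology, which is available in the setup recalled in \cref{se:cohomology}. The positivity of $m(v_I)$ from \cref{thm:duals} ensures that the scalar relating the two sets of coefficients is nonzero, so the corollary genuinely identifies the two expansions up to the intersection multiplicity.
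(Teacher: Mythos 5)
Your proof is correct and follows essentially the same route as the paper: both evaluate the pairing $\langle \iota^*\sigma_u,[\Pet_I]_S\rangle$ and compare the two sides via the projection formula, the Duality Theorem, and standard Schubert duality in $G/B$.
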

\begin{proof}
Using Theorem \ref{thm:duals}
and the equality 
$\langle \sigma_v, [X_u]_S \rangle_{G/B} = \delta_{u,v}$,
we calculate,
\begin{align*}
c_I^u  &=   \langle \sigma_u, \iota_*[\Pet_I]_S\rangle_{G/B} 
	    = \langle \iota^*\sigma_u,  [\Pet_I]_S\rangle_{\Pet} 
        = m(v_I) b_u^I \/.
\end{align*}
Here the first equality follows from the definition of $c_I^u$, the second from the projection formula,
and the third from Theorem \ref{thm:duals} together with the definition of $b_u^I$. 
\end{proof}

\subsection{Schubert Expansion of the Peterson Classes}
\label{schubertExpansion}
In their study of certain regular Hessenberg varieties, 
Abe, Fujita and Zeng \cite{abe.fujita.zeng} found a beautiful closed formula for the
non-equivariant Schubert expansions of the fundamental classes of these varieties.
For the Peterson varieties discussed here, their formula states that
\[ \iota_*[\Pet] = \prod_{\alpha \in \Phi^+_\Delta \setminus \Delta} c_1(G \times^B \mathbb{C}_{-\alpha})  \cap [G/B]\/ \in H_*(G/B); \]
see Cor. 3.9 in {\em loc. cit.} However, since the line bundles $G \times^B \mathbb{C}_{-\alpha}$
are not globally generated, this formula involves cancellations.
A manifestly positive formula was recently found by Nadeau and Tewari \cite{nadeau.tewari},
and further investigated by Horiguchi \cite{horiguchi2021mixed}, in relation to {\em mixed Eulerian numbers}.
The origins of this approach lie in the realization of 
the Peterson variety as a flat degeneration of a smooth projective toric variety, 
called the (generalized) 
\emph{permutohedral variety}; see \cite{abe.fujita.zeng,nadeau.tewari}.
The permutohedral variety is a regular semisimple Hessenberg variety;
its cohomology ring has been classically studied e.g. by Klyachko \cite{klyachko85,klyachko95}.
In this section we present a different algorithm, which calculates the equivariant Schubert expansion 
of $\iota_*[\Pet]_S$.
The algorithm is based on \Cref{thm:expansions}, and it 
depends on the multiplicities $m(v_I)$ for some choice of Coxeter elements $v_I$, $I\subset \Delta$.
The values $m(v_I)$ for a particular such choice are computed in \cref{ClassicalFormulae}.
It would be interesting to 
utilize this algorithm to extend the formulae from \cite{abe.fujita.zeng,nadeau.tewari} to the equivariant 
setting; this will be left for elsewhere. 

\begin{prop}
\label{prop:SchubertExpansion}
Fix Coxeter elements $v_I$ for each subset $I\subset\Delta$,
and consider the matrices,
\begin{align*}
A_{u,I}=\iota^*_{w_I}\sigma_u,&&
C_{I,J}=\iota^*_{w_J}\sigma_{v_I},&&
M_{I,J}=m(v_I) \delta_{I,J}.
\end{align*}
Here $A$ is a 
$|W|\times 2^{|\Delta|}$ 
matrix,
and $C$ and $M$ are 
$2^{|\Delta|}\times 2^{|\Delta|}$ matrices.
The fundamental classes $[\Pet_I]_S$ and $[X_u]_S$ are related by the matrix equation,
\begin{equation}
\label{schubertExpansionFormula}
\left([\Pet_I]_S\right)_{I\subset\Delta}=(AC^{-1}M)^T\left([X_u]_S\right)_{u\in W}.
\end{equation}
\end{prop}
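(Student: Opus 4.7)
The plan is to extract the Schubert expansion coefficients $c_I^u$ of $\iota_*[\Pet_I]_S$ by combining \Cref{thm:expansions} with equivariant localization at the $S$-fixed points of $\Pet$. By \Cref{thm:expansions}, we have $c_I^u = m(v_I)\, b_u^I$, where the scalars $b_u^J \in H^*_S(pt)$ are defined by the expansion $\iota^*\sigma_u = \sum_{J\subset\Delta} b_u^J\, p_J$ in $H^*_S(\Pet)$. Writing $B$ for the $|W|\times 2^{|\Delta|}$ matrix with entries $B_{u,J}=b_u^J$, we get $c_I^u = (BM)_{u,I}$, so the matrix encoding the Schubert expansion of the Peterson classes is $(BM)^T$. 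The task is thus reduced to computing $B$.

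To compute $B$, I would localize both sides of the identity $\iota^*\sigma_u = \sum_J b_u^J\, p_J$ at each fixed point $w_K \in \Pet^S$. By functoriality of equivariant localization with respect to the composition $\{w_K\}\hookrightarrow\Pet\stackrel{\iota}{\hookrightarrow}G/B$, the restriction of $\iota^*\sigma_u$ to $w_K$ agrees with the $G/B$-localization $\iota^*_{w_K}\sigma_u = A_{u,K}$, and the restriction of $p_J = \iota^*\sigma_{v_J}$ to $w_K$ agrees with $\iota^*_{w_K}\sigma_{v_J}=C_{J,K}$. This yields the system $A_{u,K} = \sum_J b_u^J\, C_{J,K}$ for all $u$ and $K$, i.e., the matrix identity $A = BC$. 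Provided $C$ is invertible, we may solve $B = AC^{-1}$, whence $BM = AC^{-1}M$, and transposing gives precisely \eqref{schubertExpansionFormula}.

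The main obstacle is to verify that the square matrix $C$ is invertible, necessarily over the fraction field of $H^*_S(pt)$. I would argue this as follows. By \Cref{prop:Petbasis}(a), the classes $\{p_J/m(v_J)\}_{J\subset\Delta}$ form an $H^*_S(pt)$-basis of $H^*_S(\Pet)$. Since $\Pet$ admits an $S$-stable affine paving with exactly $2^{|\Delta|}$ isolated fixed points $\{w_J\}$, the argument used in the proof of \Cref{intDual} (invoking \cite[Thm~1.6.2]{GKM} and the freeness statement of \Cref{lemma:generate}) shows that the simultaneous localization map
\[
H^*_S(\Pet)\otimes\operatorname{Frac} H^*_S(pt)\;\longrightarrow\; \bigoplus_{J\subset\Delta} H^*_S(w_J)\otimes\operatorname{Frac} H^*_S(pt)
\]
is an isomorphism of free $\operatorname{Frac} H^*_S(pt)$-modules of rank $2^{|\Delta|}$. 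The matrix $C$ represents this isomorphism with respect to the basis $\{p_J\}$ on the source and the canonical basis on the target, so $C$ is invertible over $\operatorname{Frac} H^*_S(pt)$. Although $C^{-1}$ may have denominators, the entries of the final product $AC^{-1}M$ lie in $H^*_S(pt)$, since they equal the structure constants $c_I^u$, which are defined integrally. Putting these pieces together yields \eqref{schubertExpansionFormula}, and the routine remainder is simply to confirm the index conventions in the transposition step.
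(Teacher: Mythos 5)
Your proof is correct and takes essentially the same approach as the paper: localizing at the $S$-fixed points $w_K$ to obtain the matrix relation $A = BC$ (the paper writes the equivalent vector equations $\boldsymbol\sigma = A\boldsymbol\tau$, $\mathbf{p} = C\boldsymbol\tau$), invoking the GKM localization isomorphism over the fraction field to invert $C$, and invoking \Cref{thm:expansions} to relate the coefficients $b_u^I$ to the Schubert expansion coefficients $c_I^u$. The only cosmetic difference is that you introduce the intermediate matrix $B$ explicitly and apply \Cref{thm:expansions} at the outset rather than at the end.
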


\begin{proof}
Consider the commutative diagram,
\begin{equation}
\label{localizationComDiag}
\begin{tikzcd}
H^*_S(G/B)\arrow[r,"\oplus\iota^*_u"]\arrow[d,"\iota^*"] & \bigoplus\limits_{u\in W}H^*_S(u)\arrow[d,twoheadrightarrow]\\
H^*_S(\Pet)\arrow[r,"\oplus\iota^*_{w_I}"]                   & \bigoplus\limits_{I\subset\Delta}H^*_S(w_I).
\end{tikzcd}
\end{equation}
Let $\mathcal Q$ be the fraction field of the integral domain $H^*_S(pt)$, and let $R_{\mathcal Q}:= R\otimes_{H^*_S(pt)} \mathcal Q$ for any $H^*_S(pt)$-module $R$.
The map
$H^*_S(\Pet)\to\bigoplus\limits_{I\subset \Delta} H^*_S(w_I)$
induces an isomorphism
$H^*_S(\Pet)_{\mathcal Q}\xrightarrow\sim\bigoplus\limits_{I\subset\Delta} H^*_S(w_I)_{\mathcal Q}$;
see \cite[Cor~1.3.2, Thm~1.6.2, Thm~6.3]{GKM}.
Observe that $H^*_S(\Pet)$ is torsion-free,
and is naturally identified as a lattice in the $\mathcal Q$-vector space $H^*_S(\Pet)_{\mathcal Q}$.
Let $\tau_I$ denote 
a generator of $H^*_S(w_I)$,
and consider the column vectors,
\begin{align*}
\boldsymbol\sigma & =\left(\iota^*\sigma_u\right)_{u\in W}, & \boldsymbol\tau & =\left(\tau_I\right)_{I\subset\Delta},\\
\mathbf p         & =\left(p_I\right)_{I\subset\Delta},     & \mathbf q       & =\left(\frac{p_I}{m(v_I)}\right)_{I\subset\Delta}.
\end{align*}
We have the following equalities in $H^*_S(\Pet)_{\mathcal Q}$:
\begin{align}
\label{schubertExpansionMatrices}
\mathbf p=M\mathbf q,&& \boldsymbol\sigma=A\boldsymbol\tau,&& \mathbf p=C\boldsymbol\tau.
\end{align}
The matrix $C$ is invertible since both $\{p_I\}_{I\subset\Delta}$ and $\{\tau_I\}_{I\subset\Delta}$ are bases for $H^*_S(\Pet)_{\mathcal Q}$.
We deduce that
$
\boldsymbol\sigma=AC^{-1}M\mathbf q
$.
\Cref{schubertExpansionFormula} now follows from \Cref{thm:expansions}.
\end{proof}

\begin{rem}
The coefficients $A_{w,I}$ and $C_{I,J}$ in \cref{prop:SchubertExpansion} can be computed by
composing the localization formula for the $T$-equivariant Schubert classes
(cf. \cite{AJS,billey:kostant}) with the restriction map $\X(T)\to\X(S)$
 defined by $\lambda \mapsto \lambda|{S}$.
\end{rem}

\begin{rem}
The invertibility of the matrix $C$ in \cref{prop:SchubertExpansion} can be directly deduced from the observation that
$\iota_{w_J}^*\sigma_{v_I}\neq 0$ if and only if $I\subset J$,
and hence $C$ is upper triangular with respect to the partial order $I\leq J\iff I\subset J$.
\end{rem}

\begin{example}
We use \cref{prop:SchubertExpansion} to compute the Schubert expansion of $[\Pet]_S$
in the case $\Delta=B_2$, with $v_\Delta=s_1s_2$.
Set
\begin{align*}
p_\phi=\iota^*\sigma_{id},&&
p_{\{1\}}=\iota^*\sigma_1,&&
p_{\{2\}}=\iota^*\sigma_2,&&
p_{\{1,2\}}=\iota^*\sigma_{12}.
\end{align*}
Composing the localization formula for Schubert classes (cf. \cite{AJS,billey:kostant}) with the restriction map $\X(T)\to\X(S)$,
we obtain the $S$-equivariant localizations of the Schubert classes: 
\begin{align}
\label{B2SchubertExpansion}
\begin{pmatrix}
\iota^*\sigma_{id}\\
\iota^*\sigma_{1}\\
\iota^*\sigma_{2}\\
\iota^*\sigma_{12}\\
\iota^*\sigma_{21}\\
\iota^*\sigma_{121}\\
\iota^*\sigma_{212}\\
\iota^*\sigma_{1212}
\end{pmatrix}
&=
\left(\begin{array}{rrrr}
1 & 1 & 1 & 1 \\
0 & t & 0 & 4t \\
0 & 0 & t & 3t \\
0 & 0 & 0 & 6t^2\\
0 & 0 & 0 & 6t^2\\
0 & 0 & 0 & 6t^3\\
0 & 0 & 0 & 6t^3\\
0 & 0 & 0 & 6t^4\\
\end{array}\right)
\begin{pmatrix}
\tau_\phi\\
\tau_1\\
\tau_2\\
\tau_{12}
\end{pmatrix}.
\end{align} 
The $8\times 4$ matrix in \cref{B2SchubertExpansion} corresponds to the matrix $A$ in \cref{schubertExpansionMatrices},
and the matrix $C$ is precisely its top $4\times 4$ submatrix.
The multiplicities $m(v_I)$ are  computed in \cref{ClassicalFormulae};
we have $m(v_I)=1$ for all $I\subsetneq B_2$ and $m(v_\Delta)=2$, i.e.,
\begin{equation*}
M=\begin{pmatrix}
1&0&0&0\\
0&1&0&0\\
0&0&1&0\\
0&0&0&2
\end{pmatrix}.
\end{equation*}
Applying \cref{schubertExpansionFormula}, we obtain 
\begin{equation*}
\begin{pmatrix}
\iota_*[\Pet_\phi]_S\\
\iota_*[\Pet_{\{1\}}]_S\\
\iota_*[\Pet_{\{2\}}]_S\\
\iota_*[\Pet]_S
\end{pmatrix}
=\begin{pmatrix}
1&0&0&0&0&0&0&0\\
0&1&0&0&0&0&0&0\\
0&0&1&0&0&0&0&0\\
0&0&0&2&2&2t&2t&2t^2\\
\end{pmatrix}
\begin{pmatrix}
\iota_*[X_{id}]_S\\
\iota_*[X_{1}]_S\\
\iota_*[X_{2}]_S\\
\iota_*[X_{12}]_S\\
\iota_*[X_{21}]_S\\
\iota_*[X_{121}]_S\\
\iota_*[X_{212}]_S\\
\iota_*[X_{1212}]_S
\end{pmatrix}.
\end{equation*}
In particular, we have $\iota_*[\Pet]_S=2[X_{12}]_S+2[X_{21}]_S+2t[X_{121}]_S+2t[X_{212}]_S+2t^2[X_{1212}]_S$.
\end{example}

\section{Positivity}
\label{sec:positivity}

We recall a theorem of Graham \cite[Thm.~3.2]{graham:positivity}, which plays a key role in the proof of our positivity results, \cref{thm:hompos,thm:mainpos}.

\begin{thm}
\label{thm:graham-pos}
Let $B'$ be a connected solvable group with unipotent radical $N'$,
and let $T' \subset B'$ be a maximal torus, so that $B' = T'N'$.
Let $\alpha_1, \ldots , \alpha_d$ be the weights of $T'$ acting on $Lie(N')$.
Let $X$ be a scheme with a $B'$-action, and $Y$ a $T'$-stable subvariety of $X$.
Then there exist $B'$-stable subvarieties $D_1, \ldots , D_k$ of $X$
such that in the equivariant homology $H_*^{T'}(X)$,
\begin{align*}
[Y]_{T'} = \sum f_i [D_i]_{T'},
\end{align*}
where each $f_i \in H^*_{T'}(pt)$ is a linear combination of monomials 
in $\alpha_1, \ldots , \alpha_d$ with non-negative integer coefficients.
\end{thm}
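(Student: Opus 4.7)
The plan is to prove the theorem by induction on $d := \dim\overline{B'\cdot Y} - \dim Y$, the codimension of $Y$ inside the closure of its $B'$-orbit in $X$. Replacing $X$ by $\overline{B' \cdot Y}$ if desired, we may assume $X$ is irreducible and $B'$-stable. The base case $d = 0$ is immediate: then (taking $Y$ closed) $Y$ is $B'$-stable and we take $[Y]_{T'} = 1 \cdot [Y]_{T'}$, with $f = 1$, a non-negative constant.

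For the inductive step ($d > 0$), the central idea is to enlarge $Y$ using a one-parameter subgroup of $N'$. Since $Y$ is not $B'$-stable, some root subgroup $U \cong \mathbb{G}_a$ of $N'$ on which $T'$ acts by one of the weights $\alpha = \alpha_j$ satisfies $U\cdot Y \not\subset Y$. Put $V := \overline{U \cdot Y}$; this is a $T'$-stable subvariety of $X$ with $\dim V = \dim Y + 1$, and the multiplication map $\mu : U \times Y \to V$ is $T'$-equivariant and generically finite onto its image. Geometrically, $V$ admits an open subset that is a $T'$-equivariant $\mathbb{G}_a$-bundle over (an open subset of) $Y$, with weight $\alpha$ on the fiber. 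Regarding $Y$ as (the closure of) the zero section of this bundle and applying the self-intersection / Thom-class formula in equivariant Borel--Moore homology, I would establish a relation
\[ [Y]_{T'} \;=\; \alpha \cdot [V]_{T'} \;+\; \sum_j m_j\,[E_j]_{T'}, \]
in $H_*^{T'}(X)$, where the $E_j$ are the $T'$-stable irreducible components of $V \setminus (U \cdot Y \cup Y)$ and $m_j \in \mathbb{Z}_{\ge 0}$ are their geometric multiplicities in the closure.

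Both $V$ and each $E_j$ have strictly smaller $B'$-orbit excess than $Y$: $V$ has the same $B'$-orbit closure as $Y$ but larger dimension, so its $d$-invariant drops by at least one; each $E_j$ lies inside $\overline{B' \cdot Y}$ but has strictly smaller dimension than $V$, and its $B'$-orbit closure is contained in $\overline{B'\cdot Y}$. The inductive hypothesis then expresses $[V]_{T'}$ and each $[E_j]_{T'}$ as a non-negative integer combination of monomials in $\alpha_1, \ldots, \alpha_d$ times fundamental classes of $B'$-stable subvarieties. Substituting back, multiplication by the weight $\alpha$ and the non-negative integers $m_j$ preserves the positivity structure, giving the desired expansion for $[Y]_{T'}$.

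The main obstacle is the construction and verification of the key relation displayed above, in particular the twin claims that the coefficient of $[V]_{T'}$ is exactly $\alpha$ (with the correct sign) and that the boundary components appear with non-negative integer multiplicities. Obtaining the sign cleanly requires care: it reflects the $T'$-weight on the conormal direction of $Y$ inside $V$, which is $\alpha$ because $V$ is locally an $\mathbb{A}^1$-bundle of weight $\alpha$ containing $Y$ as the zero section; working in Borel--Moore homology rather than Chow or cohomology requires checking that this Gysin/excess-intersection computation is valid in the limiting (possibly non-smooth) setting. A minor additional concern is choosing the inductive quantity so that termination is clear; using the pair $(\dim \overline{B'\cdot Y},\ \dim \overline{B' \cdot Y} - \dim Y)$ with lexicographic order inside a fixed ambient $B'$-stable $X$ handles this cleanly.
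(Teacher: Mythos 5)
The paper does not prove this statement; it is \cref{thm:graham-pos}, recalled verbatim from Graham \cite[Thm.~3.2]{graham:positivity}, so there is no in-paper argument to compare yours against. What you have written is a sketch of Graham's original theorem, and it has two genuine gaps.

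First, your inductive invariant does not decrease. Both $\dim\overline{B'\cdot Y} - \dim Y$ alone and the lexicographic pair $\bigl(\dim\overline{B'\cdot Y},\,\dim\overline{B'\cdot Y} - \dim Y\bigr)$ can remain unchanged when you pass from $Y$ to a boundary component $E_j$. Indeed $\dim E_j = \dim Y$ (this is forced by homological degree), and $\overline{B'\cdot E_j} \subset \overline{B'\cdot Y}$, so both coordinates can stay equal and the recursion need not terminate. Graham avoids this by inducting on $\dim N'$ rather than on any orbit codimension. Concretely, one chooses a $T'$-stable normal subgroup $N_1 \lhd N'$ of codimension one (possible since $[N',N']$ is $T'$-stable of smaller dimension, and $N'/[N',N']$ is a $T'$-module from which one removes a weight line), together with a one-parameter $T'$-weight subgroup $V \cong \mathbb{G}_a$ with $VN_1=N'$; the induction hypothesis applied to $B_1 := T'N_1$ gives a positive expansion into $B_1$-stable classes $D_i$, and \emph{because $N_1$ is normal and $D_i$ is $N_1$-stable}, the variety $\overline{V\cdot D_i}$ and every component of $\overline{V\cdot D_i}\setminus V\cdot D_i$ are automatically stable under all of $B' = T'VN_1$. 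This normality structure is exactly what makes the recursion bottom out, and it is absent from your argument: an arbitrary root subgroup $U$ need not yield $B'$-stable boundary components.

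Second, the displayed relation $[Y]_{T'} = \alpha[V]_{T'} + \sum m_j[E_j]_{T'}$ is the actual content of the theorem, and the ``self-intersection / Thom class'' heuristic does not deliver it. The zero-section/Euler-class computation gives the leading term $\alpha[V]_{T'}$ over the locus where $V\to Y$ is a genuine $\mathbb{G}_a$-bundle, but it says nothing about the boundary, and in particular does not explain why the boundary multiplicities $m_j$ are non-negative integers. Graham's Lemma~3.1 establishes this relation by compactifying $\mathbb{G}_a$ to $\mathbb{P}^1$, taking the closure of $\{(u,u\cdot y)\}$ in $\mathbb{P}^1\times X$, and pushing forward the equivariant divisor relation $[0]-[\infty]=\alpha\cdot[\mathbb{P}^1]$; positivity of the $m_j$ comes from the fiber over $\infty$ being an effective cycle supported on the boundary. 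You correctly flag this as the obstacle, but as written the proof has not been reduced to anything easier than the theorem itself.
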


\begin{thm}
\label[theorem]{thm:hompos}
Let $I$ be a subset of $\Delta$,
let $\iota:\Pet \hookrightarrow G/B$ be the inclusion,
and consider the Schubert expansion,
\begin{align*}
\iota_*[\Pet_I]_S = \sum_{v\in W} c_I^v\, [X_v]_S.
\end{align*}
Then $c_I^v \in H^*_S(pt)$ is a polynomial in $t$ with non-negative coefficients.
\end{thm}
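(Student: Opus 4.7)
The plan is to apply Graham's positivity theorem (\Cref{thm:graham-pos}) directly, with a carefully chosen solvable group. The natural choice $B' = B$ is unavailable because $\Pet_I$ is only $S$-stable, not $T$-stable. Instead, I would take $B' := S \cdot U \subset B$, where $U$ is the unipotent radical of $B$. Since $S \subset T$ normalizes $U$, the group $B'$ is a connected solvable subgroup of $B$ with unipotent radical $N' = U$ and maximal torus $T' = S$. It acts on $X := G/B$ by restriction of the $G$-action.

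With $Y := \Pet_I$, which is $S$-stable by the discussion in \Cref{subsec:petersonVariety}, \Cref{thm:graham-pos} produces $B'$-stable closed subvarieties $D_1, \ldots, D_k$ of $G/B$ and coefficients $f_i \in H^*_S(pt)$ with
\[
\iota_*[\Pet_I]_S \;=\; \sum_i f_i\, [D_i]_S \quad \in\; H_*^S(G/B),
\]
where each $f_i$ is a non-negative integer combination of monomials in the weights of $S$ on $\mathrm{Lie}(U)$. Every such weight is the restriction of a positive root $\alpha = \sum_{\beta \in \Delta} n_\beta\, \beta$ to $S$, namely $\left(\sum_\beta n_\beta\right) t$, a positive integer multiple of $t$. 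Hence each $f_i$ is a polynomial in $t$ with non-negative integer coefficients.

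It remains to identify the $D_i$ with Schubert varieties. Each irreducible closed $B'$-stable subvariety of $G/B$ is in particular $U$-stable, hence a union of $U$-orbits. These $U$-orbits are precisely the Schubert cells, since $X_w^\circ = BwB/B = UwB/B$ is a single $U$-orbit and the cells partition $G/B$. Irreducibility of $D_i$ forces a unique open dense $U$-orbit $X_{v_i}^\circ \subset D_i$, so $D_i = \overline{X_{v_i}^\circ} = X_{v_i}$ is a Schubert variety. Regrouping the expansion by $v$ then yields
\[
\iota_*[\Pet_I]_S \;=\; \sum_{v \in W} c_I^v\, [X_v]_S, \qquad c_I^v = \sum_{i:\,v_i = v} f_i,
\]
with each $c_I^v$ a polynomial in $t$ with non-negative integer coefficients, as claimed. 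The main conceptual step is the replacement of $(B,T)$ by $(SU, S)$, which simultaneously preserves $S$-stability of $\Pet_I$ and forces all relevant torus weights to lie on the positive ray generated by $t$; once this is in place, the verification of Graham's hypotheses and the identification of $U$-stable irreducible subvarieties with Schubert varieties are routine.
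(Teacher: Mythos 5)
Your proof is correct and follows essentially the same route as the paper's: both apply Graham's positivity theorem with $B'=SU$, $T'=S$, and then observe that the resulting $B'$-stable irreducible subvarieties of $G/B$ are Schubert varieties and that the $S$-weights on $\mathrm{Lie}(U)$ are positive multiples of $t$. The only cosmetic difference is that you identify the $D_i$ via $U$-orbits, whereas the paper phrases this in terms of $B'$-orbits (noting $U\subset B'\subset B$); the conclusion is the same.
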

\begin{proof}
We apply Graham's positivity theorem to the following situation:
$Y=\Pet_I\subset X=G/B$, $T'=S$, and $B'=SU$,
where $U$ is the unipotent radical of $B$.
We have $U\subset B'\subset B$, and  since the $U$-orbits and $B$-orbits 
in $G/B$ coincide, the $B'$-orbits in $G/B$ are precisely the Schubert cells $X_v^\circ$.

Observe that 
the restriction map $\X(T)\to\X(S)$ is given by $\alpha\mapsto ht(\alpha)t$ for 
$\alpha\in\Phi^+_\Delta$, where $ht(\alpha)$ is the height of $\alpha$. It follows that
the weights for the $S$-action on $Lie(U)$ are positive integer multiples of $t$.
It follows from \cref{thm:graham-pos} that each $c_I^v \in H^*_S(pt)$
is a polynomial in $t$ with non-negative coefficients.
\end{proof}

\begin{thm}
\label[theorem]{thm:mainpos}
Let $p_I:=\iota^*\sigma_{v_I} \in H^*_S(\Pet)$ for some Coxeter element $v_I$,
and consider the multiplication in $H^*_S(\Pet)$,
\begin{align*}
p_I \cdot p_J = \sum_{K\subset\Delta} c_{I,J}^K p_K.
\end{align*}
The structure constants $c_{I,J}^K \in H^*_S(pt)$ are polynomials in $t$ with non-negative coefficients.
\end{thm}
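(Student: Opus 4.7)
The plan is to combine two positivity inputs already in hand: equivariant Schubert positivity in $H^*_S(G/B)$, and the homology-side positivity of \Cref{thm:hompos}. The Duality \Cref{thm:duals}, in the form of Corollary~\ref{thm:expansions}, will be the bridge between them.

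First, I would expand the product in $H^*_S(G/B)$:
\[
\sigma_{v_I}\cdot\sigma_{v_J} = \sum_{w\in W} d_{I,J}^w\,\sigma_w,
\]
where each $d_{I,J}^w\in H^*_S(pt)$ is obtained from the corresponding $T$-equivariant Schubert structure constant by restriction along $\X(T)\to\X(S)$, which sends each simple root to $t$. Graham's theorem applied to the $T$-Schubert basis of $G/B$ (with $B'=B$, so that the weights of $T'=T$ on the unipotent radical are the positive roots) shows that each $T$-equivariant coefficient is a polynomial in the simple roots with nonnegative integer coefficients. Under restriction to $S$, each $d_{I,J}^w$ is therefore a polynomial in $t$ with nonnegative integer coefficients. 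Applying $\iota^*$ and using $\iota^*\sigma_{v_I}=p_I$, $\iota^*\sigma_{v_J}=p_J$ gives
\[
p_I\cdot p_J = \sum_{w\in W} d_{I,J}^w\,\iota^*\sigma_w.
\]

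Second, I would re-expand each $\iota^*\sigma_w$ in the $\{p_K\}$ basis: $\iota^*\sigma_w = \sum_{K\subset\Delta} b_w^K\,p_K$. By \Cref{thm:expansions}, $b_w^K = c_K^w/m(v_K)$, where $c_K^w$ is the coefficient of $[X_w]_S$ in the Schubert expansion of $\iota_*[\Pet_K]_S$. \Cref{thm:hompos} asserts that $c_K^w$ is a polynomial in $t$ with nonnegative coefficients, and since $m(v_K)$ is a positive integer, each $b_w^K$ is also a polynomial in $t$ with nonnegative (rational) coefficients. Substituting the two expansions yields
\[
c_{I,J}^K = \sum_{w\in W} d_{I,J}^w\cdot b_w^K,
\]
a sum of products of polynomials in $t$ with nonnegative coefficients, hence itself such a polynomial.

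There is no genuine technical obstacle; the proof is an assembly of existing positivity statements. The conceptual content lies in recognizing that the Duality Theorem transports the homology-side positivity of \Cref{thm:hompos} (whose proof applies Graham's theorem to $B'=SU$ acting on $G/B$) into positivity of the transition matrix $(b_w^K)$ from $\{\iota^*\sigma_w\}$ to $\{p_K\}$. The only mild subtlety is that the $b_w^K$ are only guaranteed \emph{rational} nonnegative coefficients, but this is harmless for the nonnegativity claim in the theorem.
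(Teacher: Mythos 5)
Your proof is correct and follows essentially the same route as the paper's: expand $\sigma_{v_I}\cdot\sigma_{v_J}$ in $H^*_S(G/B)$ with Graham-positive coefficients, pull back via $\iota^*$, and then re-expand each $\iota^*\sigma_w$ in the $\{p_K\}$ basis using \cref{thm:expansions} together with the homology positivity of \cref{thm:hompos}. The paper leaves the final substitution implicit; you spell it out, and you also correctly flag the mild point that the coefficients $b_w^K=c_K^w/m(v_K)$ are a priori nonnegative rationals, which does not affect the conclusion.
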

\begin{proof}
By Graham's equivariant positivity theorem
\cite[Prop 2.2, Thm 3.2]{graham:positivity},
the structure constants $c_{u,v}^w$ in the expansion
\[
\sigma_u \cdot \sigma_v = \sum c_{u,v}^w \sigma_w \in H^*_T(G/B)
\]
are polynomials in the $T$-weights of $Lie(U)$ with non-negative coefficients.
Then
\begin{equation*}
p_I \cdot p_J  =  \iota^*\sigma_{v_I} \cdot \iota^*\sigma_{v_J} = \sum d_{u,v}^w \iota^*\sigma_w,
\end{equation*}
where $d_{u,v}^w$ is the image of $c_{u,v}^w$ under the restriction map $\X(T)\to \X(S)$;
in particular, $d_{u,v}^w$ is a polynomial in $t$ with non-negative coefficients.
The result now follows from \Cref{thm:hompos,thm:expansions},
since the classes $\iota^*\sigma_w$ expand into the classes $p_K$
with coefficients having the same positivity property as the $d_{u,v}^w$.
\end{proof}

\begin{rem}
\label{rmk:posrmk}
In the recent preprint \cite{goldin.gorbutt:PetSchubCalc}, Goldin and Gorbutt 
found a manifestly positive formula for the coefficients $c_{I,J}^K$, in Lie type A, and 
for a particular choice of the Coxeter elements $v_I$. While this paper was in preparation, 
a different combinatorial model, in the non-equivariant cohomology,
appeared in the preprint \cite{abe.horiguchi.kuwata.zeng:left-right} by Abe, Horiguchi,
Kuwata and Zeng. They also provide
a geometric proof of positivity (cf. Prop. 4.15 in {\em loc.~cit.}), which utilizes a 
`Giambelli formula', writing the classes $p_I$ in terms of 
products of pull-backs of the (effective) line bundles $GL_n \times^B \mathbb{C}_{-\omega_i}$ 
associated to the fundamental weights $\omega_i$. This argument 
should extend to arbitrary Lie type if one utilizes instead the more general
equivariant Giambelli formulae obtained by Drellich \cite{drellich:monk}, 
specialized to ordinary cohomology.
\end{rem}

\section{Stability Properties}
\label{sec:stable}
In this section, we utilize a common alternate construction of the Peterson variety
in order to prove a stability property of Peterson varieties.
For each finite-type Dynkin diagram $\Delta$,
we construct a variety $\Pet(\Delta)$ inside the flag manifold $\Fl(\Delta)$
which is isomorphic to the Peterson variety $\Pet$
corresponding to any group $G$ whose Dynkin diagram is $\Delta$.
The equality $\Pet(\Delta)=\Pet$ is well-known to experts; 
in \cref{sec:appendix}, we present a proof following Kostant \cite{kostant:flag}.

For $I\subset\Delta$,
we show that there is a natural inclusion $\Pet(I)\hookrightarrow\Pet(\Delta)$
identifying $\Pet(I)$ with the Peterson cell closure $\Pet_I$.
This implies that the fundamental classes $[\Pet_K]_S$ and the cohomology classes $p_K$
are stable for the inclusion $\Pet(I)\hookrightarrow\Pet(\Delta)$,
and that
the Peterson Schubert varieties of \cite{insko.tymoczko:intersection.theory}
are simply Peterson varieties corresponding to smaller groups.

\subsection{The Flag Manifold of a Dynkin diagram}
\label{sec:flagM}
Let $\Phi_\Delta$ (resp. $\Phi^+_\Delta$, $W_\Delta$) denote the root system (resp. positive roots, Weyl group)
corresponding to a finite-type Dynkin diagram $\Delta$.
Following \cite{tits:LAconstruction}, let $\mathfrak g_\Delta$ be the canonical complex semisimple Lie algebra associated to $\Delta$.
Recall that $\mathfrak g_\Delta$ comes with elements $\{e_\alpha,h_\alpha\}_{\alpha\in  \Phi_\Delta}$,
such that the $h_\alpha$ span a Cartan subalgebra $\mathfrak h_\Delta$ of $\mathfrak g_\Delta$,
and the $(e_\alpha)_{\alpha\in  \Phi_\Delta}$ form a Chevalley system for $(\mathfrak g_\Delta,\mathfrak h_\Delta)$; see \cite[Ch~7, \S2]{bourbaki:Lie79}.
We denote by $\mathfrak b_\Delta$ (resp. $\mathfrak b^-_\Delta$) the Borel subalgebra of $\mathfrak g_\Delta$ spanned by $\mathfrak h_\Delta$ and the set $\set{e_\alpha}{\alpha\in  \Phi_\Delta^+}$ (resp. $\set{e_\alpha}{\alpha\in  \Phi_\Delta^-}$).

We fix a connected Lie group $G$ with $Lie(G)=\mathfrak g_\Delta$. 
The adjoint action of $G$ on $\mathfrak{g}_\Delta$ induces an action on 
the Grassmannian $\mathrm{Gr}(\dim \mathfrak{b}_\Delta, \mathfrak{g}_\Delta)$. The orbit of 
$\mathfrak{b}_\Delta$ is closed, and it gives the 
\emph{flag variety} $\Fl(\Delta)$; see \cite[\S3.1]{chriss.ginzburg}. The Borel subalgebras of $\mathfrak{g}$ are conjugate under the adjoint action giving the following description of the flag variety: 
\begin{align}
\label{defnFlag}
\Fl(\Delta)=\set{\mathfrak b\subset\mathfrak g_\Delta}{\mathfrak b \text{ a Borel subalgebra of }\mathfrak g_\Delta}.
\end{align}
The stabilizer of $\mathfrak b_\Delta$ in $G$ is the Borel subgroup $B\subset G$ satisfying $\mathfrak b_\Delta=Lie(B)$,
hence we have the usual $G$-equivariant identification,
\begin{equation}
\label{eqn:flagVariety}
\varphi:G/B \overset\sim\to \Fl(\Delta).
\end{equation}
For $I\subset\Delta$,
the subalgebra of $\mathfrak g_\Delta$ spanned by $\{e_\alpha,h_\alpha\}_{\alpha\in\Phi_I}$
is precisely the Lie algebra $\mathfrak g_I$ associated to the Dynkin diagram $I$.
We have $\mathfrak h_I=\mathfrak h\cap\mathfrak g_I$ and 
$\mathfrak b_I=\mathfrak b_\Delta\cap\mathfrak g_I$.
Let
$T_I$, $B_I$, and $G_I$
be the connected subgroups of $G$ corresponding to 
$\mathfrak h_I$, $\mathfrak b_I$ and $\mathfrak g_I$ respectively.
The induced map $G_I/B_I\to G/B$ corresponds to an embedding $\Fl(I)\to\Fl(\Delta)$ via \cref{eqn:flagVariety}.
In \cref{eq:flaginclusion}, we give a characterization of this embedding
in terms of \cref{defnFlag}.

\begin{lemma}
\label{BorelLemma}
If $\mathfrak u\subset\mathfrak g_\Delta$ is a $|\Phi_\Delta^+|$-dimensional
subalgebra containing only nilpotent elements, then its normalizer
$N(\mathfrak u)=\set{x\in\mathfrak g}{ad(x)\mathfrak u\subset\mathfrak u}$
is a Borel subalgebra of $\mathfrak g_\Delta$.
\end{lemma}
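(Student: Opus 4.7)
The plan is to identify $\mathfrak{u}$ with the nilradical of some Borel subalgebra of $\mathfrak{g}_\Delta$, after which computing $N(\mathfrak{u})$ reduces to a standard root-space calculation. Since $\mathfrak{g}_\Delta$ is semisimple, Jordan decomposition implies that an element of $\mathfrak{g}_\Delta$ is nilpotent if and only if it is ad-nilpotent; hence every element of $\mathfrak{u}$ is ad-nilpotent, and by Engel's theorem $\mathfrak{u}$ is a nilpotent Lie algebra. To place $\mathfrak{u}$ inside the nilradical of some Borel subalgebra, I would let $U\subset G$ be the connected unipotent subgroup with $Lie(U)=\mathfrak{u}$ and apply the Borel fixed-point theorem to the action of $U$ on the flag variety $\Fl(\Delta)$. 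This produces a Borel subalgebra $\mathfrak{b}'$ fixed by $U$, so $\mathfrak{u}$ normalizes $\mathfrak{b}'$; since Borel subalgebras are self-normalizing, $\mathfrak{u}\subset\mathfrak{b}'$, and since every element of $\mathfrak{u}$ is nilpotent, $\mathfrak{u}$ actually lies in the nilradical $\mathfrak{n}'$ of $\mathfrak{b}'$. A dimension count then gives $\dim\mathfrak{u}=|\phi_\Delta^+|=\dim\mathfrak{n}'$, so $\mathfrak{u}=\mathfrak{n}'$.

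It then remains to verify that $N_{\mathfrak{g}_\Delta}(\mathfrak{n}')=\mathfrak{b}'$. The inclusion $\mathfrak{b}'\subset N(\mathfrak{n}')$ is immediate from $[\mathfrak{b}',\mathfrak{n}']\subset\mathfrak{n}'$. For the reverse, I would fix a Cartan subalgebra $\mathfrak{h}'\subset\mathfrak{b}'$; since $\mathfrak{h}'\subset N(\mathfrak{n}')$, the subalgebra $N(\mathfrak{n}')$ is $\mathfrak{h}'$-stable and hence decomposes into root spaces relative to $\mathfrak{h}'$. All positive root spaces already lie in $\mathfrak{n}'\subset N(\mathfrak{n}')$, while for any negative root $-\beta$ one has $[e_{-\beta},e_\beta]=h_\beta\in\mathfrak{h}'\setminus\mathfrak{n}'$, so $e_{-\beta}\notin N(\mathfrak{n}')$. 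Thus no negative root space contributes, giving $N(\mathfrak{n}')=\mathfrak{b}'$, which is a Borel subalgebra as required.

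The main obstacle is the containment step $\mathfrak{u}\subset\mathfrak{n}'$. The Borel fixed-point approach above is short but imports algebraic-group machinery; a purely Lie-algebraic alternative would invoke the classification of maximal ad-nilpotent subalgebras of $\mathfrak{g}_\Delta$ (all conjugate to $\bigoplus_{\alpha\in\phi_\Delta^+}\mathfrak{g}_\alpha$, and in particular all of dimension $|\phi_\Delta^+|$), so that $\mathfrak{u}$ is itself maximal ad-nilpotent and hence conjugate to such a nilradical. In either case, the dimension hypothesis on $\mathfrak{u}$ does the remaining work, and the normalizer computation then finishes the proof.
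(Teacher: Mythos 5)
Your proof is correct and reaches the same intermediate identification as the paper — namely that $\mathfrak u$ must be the nilradical of some Borel subalgebra, by a containment argument plus a dimension count — but you establish the containment by a genuinely different route. The paper simply cites Bourbaki twice: every nil subalgebra lies in a Borel $\mathfrak b$, and moreover lies inside $[\mathfrak b,\mathfrak b]$; it then compares dimensions and asserts $N([\mathfrak b,\mathfrak b])=\mathfrak b$ without further comment. You instead exponentiate $\mathfrak u$ to a unipotent subgroup $U$ and invoke the Borel fixed-point theorem on the flag variety to produce a Borel $\mathfrak b'$ with $\mathfrak u\subset\mathfrak b'$, then use the Jordan decomposition to push $\mathfrak u$ into the nilradical $\mathfrak n'$. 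That trades Bourbaki's Lie-theoretic result for algebraic-group machinery — as you yourself note — and in fact the purely Lie-algebraic alternative you sketch at the end is essentially the paper's argument. On the back end you do more than the paper: you verify explicitly, via the root-space decomposition of an $\mathfrak h'$-stable subspace and the computation $[e_{-\beta},e_\beta]=h_\beta\notin\mathfrak n'$, that $N(\mathfrak n')=\mathfrak b'$, whereas the paper treats this as a known fact. Both arguments are sound; yours is more self-contained but longer, and relies on the existence of a connected unipotent group $U$ with $\mathrm{Lie}(U)=\mathfrak u$ and on the fact that the nilpotent elements of a Borel subalgebra are exactly its nilradical, both of which you use without comment and could flag as standard.
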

\begin{proof}
Following \cite[p. 162, Cor 2]{bourbaki:Lie79},
every subalgebra $\mathfrak{u}$ containing only nilpotent elements is contained in some Borel subalgebra $\mathfrak b$,
and further, $\mathfrak u\subset[\mathfrak b,\mathfrak b]$ \cite[p. 91, Prop 5(b)]{bourbaki:Lie79}.
Comparing dimensions, we deduce that $\mathfrak u=[\mathfrak b,\mathfrak b]$,
and hence $N(\mathfrak u)=\mathfrak b$.
\end{proof}

Let $\mathfrak b_I'$ be any Borel subalgebra of $\mathfrak g_I$.
Observe that $\mathfrak v_I=\bigoplus\limits_{\alpha\in\Phi_\Delta^+\backslash\Phi_I^+}\mathfrak g_\alpha$ is $\mathfrak g_I$-stable, and
hence it is an ideal in the $|\Phi_\Delta^+|$-dimensional subalgebra $[\mathfrak b_I',\mathfrak b_I']\oplus\mathfrak v_I$.
By \cite[p. 71, Lemma 1]{bourbaki:Lie13},
we see that $[\mathfrak b_I',\mathfrak b_I']\oplus\mathfrak v_I$
is a $|\Phi_\Delta^+|$-dimensional subalgebra of $\mathfrak g_\Delta$ containing only nilpotent elements.
Following \cref{BorelLemma},
we see that $N([\mathfrak b_I',\mathfrak b_I']\oplus\mathfrak v_I)$
is a Borel subalgebra of $\mathfrak g_\Delta$.
Hence we have an embedding,
\begin{align}
\label{eq:flaginclusion}
i:\Fl(I)\to\Fl(\Delta), &&\mathfrak b_I'\mapsto N([\mathfrak b_I',\mathfrak b_I']\oplus\mathfrak v_I).
\end{align}

The embedding $i:\Fl(I)\to\Fl(\Delta)$ is $G_I$-equivariant,
and sends $\mathfrak b_I$ to $\mathfrak b_\Delta$.
It follows that under the identifications $\Fl(I)=G_I/B_I$ and $\Fl(\Delta)=G/B$ of \cref{eqn:flagVariety},
the map $i$ is precisely the map $G_I/B_I\to G/B$ induced by the inclusion $G_I\hookrightarrow G$; observe that $B_I= B \cap G_I$ follows from, e.g., \cite[\S 11.2, Corollary and Thm.~11.16 ]{borel:linear}.

We will say that a map of Lie groups $F: G_1 \to G_2$ {\em lifts} a Lie algebra map $f: \mathfrak{g}_1 \to \mathfrak{g}_2$ if $Lie(G_i) = \mathfrak{g}_i$ for $i=1,2$, and $f$ is the differential of $F$ at the identity.
\begin{rem}
The inclusion $i:\Fl(I)\to\Fl(\Delta)$ is $f$-equivariant for any map
$f:G'_I\to G$ lifting the inclusion $\mathfrak g_I\hookrightarrow\mathfrak g_\Delta$.
\end{rem}

\begin{lemma}
\label{schubStab}
Fix $w\in W_I$,
and let $\mathfrak b'_w=\mathfrak h_I\oplus\bigoplus_{\alpha\in\Phi_I^+}\mathfrak g_{w(\alpha)}$,
and $\mathfrak b_w=\mathfrak h_\Delta\oplus\bigoplus_{\alpha\in\Phi^+_\Delta}\mathfrak g_{w(\alpha)}$.
Consider the Schubert varieties
\begin{align*}
&&X^I_w=\overline{Ad(B_I)\mathfrak b'_w}\subset\Fl(I)&&\text{and}&& X_w=\overline{Ad(B)\mathfrak b_w}\subset\Fl(\Delta).
\end{align*}
Then $i(\mathfrak b'_w)=\mathfrak b_w$ and $i(X^I_w)=X_w$.
We view the $X_w^I$ as $B$-varieties via this identification.
Consider the Schubert classes
$\sigma_w\in H^*_T(\Fl(\Delta))$ and $\sigma^I_w\in H^*_T(\Fl(I))$.
We have
\begin{align*}
i_*[X^I_w]_T=[X_w]_T,&&i^*\sigma_w=\sigma^I_w.
\end{align*}
\end{lemma}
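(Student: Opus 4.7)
The proof naturally splits into four parts, which I would carry out in sequence: (1) the Borel subalgebra identity $i(\mathfrak{b}'_w) = \mathfrak{b}_w$, (2) the Schubert variety identity $i(X_w^I) = X_w$, (3) the push-forward identity $i_*[X_w^I]_T = [X_w]_T$, and (4) the pull-back identity $i^*\sigma_w = \sigma_w^I$. The key combinatorial input throughout is the standard fact that each $w \in W_I$ permutes $\phi_\Delta^+ \setminus \phi_I^+$. I would establish this by induction on length: for a simple reflection $s_\beta$ with $\beta \in I$ and $\alpha \in \phi_\Delta^+ \setminus \phi_I^+$, the identity $s_\beta(\alpha) = \alpha - \langle \alpha, \beta^\vee\rangle \beta$ shows $s_\beta(\alpha)$ remains positive (since $\alpha \neq \beta$) and still lies outside $\phi_I$ (otherwise $\alpha$ would already belong to $\phi_I$).

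For (1), I would first note that $\mathfrak{b}'_w = Ad(\dot w)\mathfrak{b}_I$ for any lift $\dot w$ of $w$ in $G_I$, so $[\mathfrak{b}'_w, \mathfrak{b}'_w] = \bigoplus_{\alpha \in \phi_I^+} \mathfrak{g}_{w(\alpha)}$. The permutation property above allows me to rewrite $\mathfrak{v}_I = \bigoplus_{\alpha \in \phi_\Delta^+ \setminus \phi_I^+} \mathfrak{g}_{w(\alpha)}$. Summing, $[\mathfrak{b}'_w, \mathfrak{b}'_w] \oplus \mathfrak{v}_I = \bigoplus_{\alpha \in \phi_\Delta^+} \mathfrak{g}_{w(\alpha)}$, which is precisely the nilpotent radical of $\mathfrak{b}_w$; its normalizer in $\mathfrak{g}_\Delta$ is $\mathfrak{b}_w$ itself, giving $i(\mathfrak{b}'_w) = \mathfrak{b}_w$.

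For (2), the $G_I$-equivariance of $i$ (remarked after \eqref{eq:flaginclusion}) yields $i(Ad(B_I)\mathfrak{b}'_w) = Ad(B_I)\mathfrak{b}_w$. To recognize the right side as the full $B$-orbit, I would pass to the $G/B$ model, where the Schubert cell $BwB/B$ equals $U_w \cdot wB$ with $U_w = \prod_{\alpha \in \phi_\Delta^+,\, w^{-1}\alpha < 0} U_\alpha$. The permutation property forces any such $\alpha$ to lie in $\phi_I^+$, so $U_w$ is contained in the unipotent radical of $B_I$ and in fact coincides with its $G_I$-analogue $U_w^I$. Hence the Schubert cells of $w$ in $\Fl(I)$ and $\Fl(\Delta)$ are identified by $i$, and taking closures (with $i$ a closed embedding) gives $i(X_w^I) = X_w$. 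Since $i$ restricts to an isomorphism $X_w^I \to X_w$, statement (3) is immediate from the definition of push-forward on fundamental classes.

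For (4), the projection formula gives, for every $v \in W_I$,
\begin{align*}
\langle i^*\sigma_w,\, [X_v^I]_T \rangle_{\Fl(I)}
= \langle \sigma_w,\, i_*[X_v^I]_T \rangle_{\Fl(\Delta)}
= \langle \sigma_w,\, [X_v]_T \rangle_{\Fl(\Delta)}
= \delta_{v,w}.
\end{align*}
Using the $T$-action on $\Fl(I)$ transported through $i$, the Schubert cells form a $T$-stable affine paving, so \Cref{lemma:generate} supplies a perfect pairing under which $\{\sigma_v^I\}$ is the basis dual to $\{[X_v^I]_T\}$; the calculation above then forces $i^*\sigma_w = \sigma_w^I$. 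The main technical point requiring care is the bookkeeping of the $T$-action: one must verify that $T \subset G$ (not merely $T_I$) normalizes $G_I$ and $B_I$ (which follows from $T \subset B$ and $Ad(T)\mathfrak{g}_\alpha = \mathfrak{g}_\alpha$ for each root $\alpha$), so that the transported $T$-action on $\Fl(I)$ is well-defined and both $i_*$ and $i^*$ are available for the full torus $T$, not just for $T_I$.
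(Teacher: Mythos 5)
Your proof is correct and follows the same overall structure as the paper's argument: first establish $i(\mathfrak b'_w)=\mathfrak b_w$ from the invariance $w(\phi_\Delta^+\setminus\phi_I^+)=\phi_\Delta^+\setminus\phi_I^+$, then deduce $i(X^I_w)=X_w$, and finally obtain the (co)homology identities by functoriality and duality. The only substantive variation is in the middle step: you identify the Schubert cells explicitly by showing the unipotent groups $U_w$ and $U_w^I$ coincide, whereas the paper more briefly notes $i(X^I_w)\subset X_w$ (since $B_I\subset B$) and concludes equality from irreducibility plus the common dimension $\ell(w)$ --- both routes are fine, and your explicit check that $T$ normalizes $G_I$ and $B_I$ (so that the $T$-equivariant push-forward and pull-back along $i$ make sense) is a worthwhile detail the paper leaves implicit.
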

\begin{proof}
Since $w\in W_I$, we have $w(\Phi_\Delta^+\backslash\Phi_I^+)=\Phi_\Delta^+\backslash\Phi_I^+$,
and hence 
\begin{equation*}
\bigoplus\limits_{\alpha\in\Phi_\Delta^+}\mathfrak g_{w(\alpha)}=
\mathfrak v_I\oplus\bigoplus\limits_{\alpha\in\Phi_I^+}\mathfrak g_{w(\alpha)}
=[\mathfrak b'_w,\mathfrak b'_w]\oplus\mathfrak v_I.
\end{equation*}
It follows that $i(\mathfrak b'_w)=\mathfrak b_w$.
Next, since $B_I\subset B$, we have $i(X^I_w)\subset X_w$.
Further, both varieties are irreducible of dimension $l(w)$, hence they are equal.
Consequently, we have $i_*[X^I_w]_T=[X_w]_T$;
since the Schubert classes $\sigma_w$ (resp. $\sigma^I_w)$ are dual to the fundamental classes $[X_w]_T$ (resp. $[X^I_w]_T$),
we further obtain $i^*\sigma_w=\sigma^I_w$.
\end{proof}

\subsection{The Peterson Variety}\label{sec:petvarstab}
Given a Borel subalgebra $\mathfrak b\subset \mathfrak g_\Delta$,
let $\mathfrak h$ be a Cartan subalgebra of $\mathfrak b$,
let $\Phi_{\mathfrak h}$
denote the root system of $(\mathfrak g_\Delta,\mathfrak h)$,
and let $\Delta_{\mathfrak b} \subset \Phi_{\mathfrak h}$
be the set of simple roots for which $\mathfrak b$ is the Borel subalgebra corresponding to the positive roots.
We define
\begin{equation}
\label{def:Hb}
\mathcal H(\mathfrak b)=\mathfrak b\oplus\bigoplus\limits_{\alpha\in \Delta_{\mathfrak b}}\mathfrak g_{-\alpha,\mathfrak h}
\end{equation}
where $\mathfrak g_{\alpha,\mathfrak h}$ is the root space corresponding to $\alpha\in\mathfrak h^*$.

Observe that the subspace $\mathcal H(\mathfrak b)$ is independent of the choice of $\mathfrak h$.
Indeed, any two Cartan subgroups $\mathfrak h$ and $\mathfrak h'$ of $\mathfrak b$ are conjugate via an inner automorphism 
of $\mathfrak b$ \cite[Ch~7, \S3, Prop~5]{bourbaki:Lie79}.
Since $\mathcal H(\mathfrak b)$ is stable under the adjoint action of $\mathfrak b$,
the automorphism preserves $\mathcal H(\mathfrak b)$.  (Alternatively, 
$\mathcal H(\mathfrak b) = [\mathfrak{u}, \mathfrak{u}]^\perp$, where $\mathfrak{u}$
is the nilpotent radical of $\mathfrak{b}$, and $\perp$ is taken with respect to the Killing form.)

\begin{defn}
\label{altDefn}
Let $\n := \sum_{\alpha\in \Delta} e_\alpha$.
The {\em Peterson variety} $\Pet(\Delta)$ is defined by
\begin{align*}
\Pet(\Delta):=\set{\mathfrak b\in \Fl(\Delta)}{\n\in\mathcal H(\mathfrak b)}.
\end{align*}
We recall that $e$ is a regular nilpotent element of $\mathfrak g_\Delta$.
Under the $G$-equivariant isomorphism $G/B\overset\sim\to\Fl(\Delta)$ from \cref{eqn:flagVariety} we have
\begin{equation}\label{E:altPet}
\begin{split}
\Pet(\Delta) &= \set{gB\in G/B}{\n\in \mathcal H(Ad(g)\mathfrak b_\Delta)}\\
&= \set{gB\in G/B}{Ad(g^{-1})\n \in H(\mathfrak b_\Delta) = 
\mathcal \mathfrak b_\Delta\oplus\bigoplus\limits_{\alpha\in\Delta}\mathbb C e_{-\alpha}}.
\end{split}
\end{equation}
\end{defn}

Let $G$, $G_I$, and $\phi:G_I\to G$ be as in \cref{sec:flagM},
and let $S_I\subset T_I$ be the one-dimensional torus
corresponding to $h_I=\sum\limits_{\alpha\in\Phi_I^+}\alpha^\vee$.

\begin{prop}
\label{cor:PSisPet}
Consider the map $i:\Fl(I)\hookrightarrow\Fl(\Delta)$ from \cref{eq:flaginclusion}.
Then  $i(\Pet(I))=\Pet_I$, as algebraic varieties. Furthermore, $\Pet_I$ is also equal to the set theoretic 
intersection $\Pet(\Delta)\cap\Fl(I)$.
\end{prop}
\begin{proof} 
Let $e_I=\sum_{\alpha\in I}e_\alpha$ and $e_{\overline I}=\sum_{\alpha\in\Delta\backslash I}e_\alpha$,
so that $\n=e_I+e_{\overline I}$.
Recall that
\begin{align*}
\Pet(I)=\set{\mathfrak b_I'\in \Fl(I)}{e_I\in\mathcal H(\mathfrak b_I')}.
\end{align*}
Consider $\mathfrak b_I'\in\Fl(I)$,
and set $i(\mathfrak b_I')=\mathfrak b'$.
We see from \cref{eq:flaginclusion,def:Hb} that
\begin{align*}
\mathcal H(\mathfrak b_I')\oplus\mathfrak v_I\subset \mathcal H(\mathfrak b').
\end{align*}
Suppose $\mathfrak b_I'\in\Pet(I)$.
We have $e_{\overline I}\in\mathfrak v_I$,
and hence
\begin{align*}
e_I\in\mathcal H(\mathfrak b_I')\implies
\n=e_I+e_{\overline I}\in\mathcal H(\mathfrak b_I')\oplus\mathfrak v_I\subset\mathcal H(\mathfrak b')
\implies \mathfrak b'\in\Pet(\Delta).
\end{align*}
We deduce that $i(\Pet(I))\subset \Pet(\Delta)$.

 Using the natural basis $\{e_\alpha,h_\alpha\}_{\alpha\in\Phi_\Delta}$ for $\mathfrak{g}_\Delta$, and its sub-basis of $\mathfrak{g}_I$,
consider the 
$\mathfrak g_I$-equivariant projection $\mathrm{pr}:\mathfrak g_\Delta\to\mathfrak g_I$ defined by:
\begin{align*}
\mathrm{pr}(e_\alpha)=
\begin{cases}
e_\alpha&\text{ if }\alpha\in\Phi_I,\\
0       &\text{otherwise.}\\
\end{cases}
&&
\mathrm{pr}(h_\alpha)=
\begin{cases}
h_\alpha&\text{ if }\alpha\in\Phi_I,\\
0       &\text{otherwise.}\\
\end{cases}
\end{align*}
Now, suppose $\mathfrak b'\in\Pet(\Delta)$.
Then $\n\in\mathcal H(\mathfrak b')$, and hence
\begin{align*}
e_I=\mathrm{pr}(\n)\in\mathrm{pr}(\mathcal H(\mathfrak b'))=\mathcal H(\mathfrak b_I').
\end{align*}
It follows that $\Fl(I)\cap\Pet(\Delta)=\Pet(I)$.
The equality $\Pet(I)=\Pet_I$ is a consequence of the observation that $\Fl(I)=X_{w_I}$,
and the irreducibility of $\Pet(I)$; see \cref{prop:pet.irred}.
\end{proof}

We will denote by $j:\Pet(I)\to\Pet(\Delta)$ the inclusion induced by restricting $i$ to $\Pet(I)$.
{In order to discuss stability for Peterson classes, we first need to
construct algebra homomorphisms $H^*_S(\Fl(\Delta){;\IQ}) \to H^*_{S_I}(\Fl(I){;\IQ})$,
compatible with restrictions to Peterson subvarieties.
To this end, we replace $S_I$ and $S$ by a $\mathbb C^*$
`parametrizing' {(not necessarily injectively)} these tori
via the
the defining cocharacters $h_I: \mathbb C^* \to S_I$ and $h: \mathbb C^* \to S$.
This $\mathbb C^*$ acts on $\Fl(I)$, respectively on $\Fl(\Delta)$,
via its image $S_I \subset T_I$ and $S \subset T$.
The embedding $\mathfrak g_I\to\mathfrak g$ is $\mathbb C^*$-equivariant,
and hence so is the embedding $\Fl(I)\to\Fl(\Delta)$ described in \eqref{eq:flaginclusion}.
These facts are summarized in the diagram below.
The question marks signify that a map may not exist;
see \Cref{rmk:nomap} below.
\begin{center}
\begin{tikzcd}
&\mathbb C^*\arrow[dl,"h_I",swap]\arrow[dr,"h"]&\\
S_I\arrow[rr,dashed,"??"]&&S\\
\mathcal Fl(I)\arrow[rr,hookrightarrow]\arrow[loop, looseness=5]&&\mathcal Fl(\Delta)\arrow[loop, looseness=5]
\end{tikzcd}
\end{center}

The cocharacter $h$ induces {an isomorphism $Lie(\mathbb C^*)\to Lie(S)$,
and hence} a ring isomorphism $H^*_S(pt; \mathbb{Q}) \to H^*_{\mathbb{C}^*}(pt; \mathbb{Q})$.
(In general the corresponding map over integer coefficients, 
$H^*_{S}(pt;\mathbb Z)\to H^*_{\mathbb{C}^*}(pt;\IZ)$, may not be an isomorphism.)
The identity map $\Fl(\Delta) \to \Fl(\Delta)$ is equivariant with respect to the cocharacter
$h: \mathbb{C}^* \to S$, therefore by functoriality we have induced isomorphisms
$H^*_{S}(\Fl(\Delta); \mathbb{Q}) \to H^*_{\mathbb{C}^*}(\Fl(\Delta); \mathbb{Q})$ and 
$H_*^{\mathbb{C}^*}(\Fl(\Delta);\mathbb{Q}) \to H^S_{*}(\Fl(\Delta); \mathbb{Q})$. 
Further, since $\Pet(\Delta)$ is $S$-stable,
it inherits a $\mathbb C^*$-action through $h$, giving isomorphisms
\begin{align*}
H^*_{S}(\Pet(\Delta);\mathbb Q)\overset\sim\to H^*_{\mathbb C^*}(\Pet(\Delta);\mathbb Q)&&
\text{and}&&
H_*^{\mathbb C^*}(\Pet(\Delta);\mathbb Q)\overset\sim\to H_*^{S}(\Pet(\Delta);\mathbb Q) \/.
\end{align*} All these isomorphisms are natural with respect to the closed embedding $\Pet(\Delta) \subset \Fl(\Delta)$.
A similar discussion 
for the cocharacter $h_I$ yields isomorphisms 
\begin{align*}
H^*_{S_I}(\Pet(I);\mathbb Q)\overset\sim\to H^*_{\mathbb C^*}(\Pet(I);\mathbb Q)&&
\text{and}&&
H_*^{\mathbb{C}^*}(\Pet(I);\mathbb Q)\overset\sim\to H_*^{S_I}(\Pet(I);\mathbb Q) \/,
\end{align*} natural with respect to $\Pet(I) \subset \Fl(I)$.} 
Consequently, the $\mathbb C^*$-equivariant inclusion $j:\Pet(I)\to\Pet(\Delta)$ yields
a pullback map,
\begin{align*}H^*_S(\Fl(\Delta);\mathbb Q)\to H^*_{S_I}(\Fl(I);\mathbb Q)\end{align*}
compatible with the algebra isomorphism $H^*_S(pt; \mathbb Q)\to H^*_{S_I}(pt;\IQ)$,
and we obtain a commutative diagram,
\begin{equation}
\label{commQ}
\begin{tikzcd}
H^*_S(\Pet(\Delta);\mathbb Q)\arrow[r,dashed,"j^*"]\arrow[d,"\cong"] & H^*_{S_I}(\Pet(I);\IQ)\arrow[d,"\cong"]\\
H^*_{\mathbb C^*}(\Pet(\Delta);\mathbb Q)\arrow[r]                               & H^*_{\mathbb C^*}(\Pet(I);\IQ).
\end{tikzcd}
\end{equation}
In a similar fashion, we also obtain a pushforward $j_*:H_*^{S_I}(\Pet(I);\IQ)\to H_*^S(\Pet(\Delta);\IQ)$.

The following is the main result of this section.
\begin{thm}
\label[theorem]{prop:naturality}
Consider the map $i:\Fl(I)\hookrightarrow\Fl(\Delta)$ from \cref{eq:flaginclusion}.
\begin{enumerate}[label=(\alph*)]
\item
For $J\subset I$, we have $i_*[\Pet_J]_{S_I}=[\Pet_J]_S$ in $H_*^{S}(\Fl(\Delta);\mathbb{Q})$.
\item 
Let $j^*:H^*_S(\Pet(\Delta);\IQ)\to H^*_{S_I}(\Pet(I);\IQ)$ denote the pullback
induced from the inclusion $\Pet(I)\hookrightarrow \Pet(\Delta)$.
For $K\subset\Delta$, we have
$$j^*p_K=\begin{cases}p_K&\text{if }K\subset I,\\0&\text{otherwise}.\end{cases}$$
\end{enumerate}
In the non-equivariant case, the equalities in (a) and (b) hold with integral coefficients.
\end{thm}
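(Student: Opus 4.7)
For part~(a), I would apply \cref{cor:PSisPet} to each of the inclusions $\Fl(J) \hookrightarrow \Fl(I)$ and $\Fl(J) \hookrightarrow \Fl(\Delta)$, simultaneously identifying the abstract Peterson variety $\Pet(J)$ with the cell closure $\Pet_J \subset \Pet(I)$ and with the cell closure $\Pet_J \subset \Pet(\Delta)$. The map $i$ then restricts to a closed embedding between these two copies of $\Pet_J$; since both are irreducible of common dimension $|J|$, this restriction is an isomorphism. The essential verification is that the isomorphism is $\mathbb C^*$-equivariant: for every $\alpha \in \phi_I$ one has $\alpha(h) = 2\,\mathrm{ht}_\Delta(\alpha) = 2\,\mathrm{ht}_I(\alpha) = \alpha(h_I)$, so the $\mathbb C^*$-actions induced by $h$ and $h_I$ agree on each root space $\mathfrak g_\alpha \subset \mathfrak g_I$. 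Pushing the fundamental class forward under this $\mathbb C^*$-equivariant isomorphism then yields $i_*[\Pet_J]_{\mathbb C^*} = [\Pet_J]_{\mathbb C^*}$ in $\mathbb Q$-coefficient $\mathbb C^*$-equivariant Borel--Moore homology, and transporting through the natural isomorphisms $H_*^{S_I}(\,\cdot\,;\IQ) \cong H_*^{\mathbb C^*}(\,\cdot\,;\IQ) \cong H_*^S(\,\cdot\,;\IQ)$ established just before the theorem statement gives part~(a).

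For part~(b), I would work with the commutative square
\[
\begin{tikzcd}
\Pet(I) \arrow[r,"j"] \arrow[d,"\iota_I"'] & \Pet(\Delta) \arrow[d,"\iota"] \\
\Fl(I) \arrow[r,"i"] & \Fl(\Delta)
\end{tikzcd}
\]
which gives $j^*p_K = j^*\iota^*\sigma_{v_K} = \iota_I^*(i^*\sigma_{v_K})$. When $K \subset I$, the Coxeter element $v_K$ lies in $W_I$, so \cref{schubStab} yields $i^*\sigma_{v_K} = \sigma^I_{v_K}$; hence $j^*p_K = \iota_I^*\sigma^I_{v_K} = p_K$ in $H^*_{S_I}(\Pet(I);\IQ)$, as desired.

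When $K \not\subset I$, I would proceed by duality. Applying the Duality Theorem (\cref{thm:duals}) to the Peterson variety $\Pet(I)$, the classes $\{[\Pet_J]_{S_I}\}_{J \subset I}$ form a basis of $H_*^{S_I}(\Pet(I);\IQ)$ which pairs (up to nonzero rescaling) perfectly with the classes $\{p_J\}_{J \subset I}$. It therefore suffices to show that $\langle j^*p_K,\, [\Pet_J]_{S_I}\rangle_{\Pet(I)} = 0$ for every $J \subset I$. By the projection formula together with part~(a) already proved,
\[
\langle j^*p_K,\, [\Pet_J]_{S_I}\rangle_{\Pet(I)} = \langle p_K,\, j_*[\Pet_J]_{S_I}\rangle_{\Pet(\Delta)} = \langle p_K,\, [\Pet_J]_S\rangle_{\Pet(\Delta)} = m(v_K)\,\delta_{K,J},
\]
and the right-hand side vanishes because $J \subset I$ while $K \not\subset I$ forces $K \neq J$. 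Perfection of the pairing (\cref{lemma:generate}) then gives $j^*p_K = 0$.

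The integral non-equivariant assertions follow from exactly the same arguments applied to ordinary (co)homology: part~(a) requires only the geometric isomorphism of cell closures supplied by \cref{cor:PSisPet}, and in part~(b) the pairing remains perfect over $\IZ$ because of the affine paving by Peterson cells (\cref{lemma:generate}). The principal obstacle I foresee is the careful bookkeeping around the three incarnations of $\Pet_J$ (the abstract Peterson variety, the cell closure in $\Pet(I)$, and the cell closure in $\Pet(\Delta)$) and, more importantly, verifying that the embedding $i$ intertwines the $S_I$- and $S$-actions through $\mathbb C^*$ --- which is precisely where the coincidence $\alpha(h) = \alpha(h_I)$ for $\alpha \in \phi_I$ enters.
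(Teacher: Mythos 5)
Your proof is correct, and it agrees with the paper's proof in its essentials for part~(a) (the factorization through $\mathrm{cor:PSisPet}$ and replacement of the $S$- and $S_I$-actions by a common $\mathbb C^*$-action) and for part~(b) when $K\subset I$ (via $\mathrm{schubStab}$ and the commutative square). The explicit check that $\alpha(h)=\alpha(h_I)$ for $\alpha\in\phi_I$ is a welcome spelling-out of the $\mathbb C^*$-equivariance; the paper asserts this more briefly in the discussion preceding the theorem.

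Where you diverge from the paper is in part~(b) for $K\not\subset I$. The paper's proof is terse there --- it only cites $\mathrm{schubStab}$, which addresses $w\in W_I$ --- and implicitly relies on the fact that $X^{v_K}\cap\Fl(I)=\varnothing$ when $K\not\subset I$ (no $w\in W_I$ satisfies $w\ge v_K$, so $X^{v_K}$ contains no $T$-fixed points of $\Fl(I)$), whence $i^*\sigma_{v_K}=0$ by injectivity of localization, and then $j^*p_K=\iota_I^*(i^*\sigma_{v_K})=0$. You instead argue via the Duality Theorem applied to the smaller Peterson variety $\Pet(I)$, the projection formula, and part~(a). This is a valid and self-contained alternative; it buys you independence from the localization argument and from direct Bruhat-order reasoning about $X^{v_K}$, at the cost of invoking part~(a) and the perfectness of the pairing. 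One small point worth flagging: the projection-formula step needs $j_*[\Pet_J]_{S_I}=[\Pet_J]_S$ in $H_*^S(\Pet(\Delta);\IQ)$, which is a slight variant of the stated part~(a) (there phrased using $i_*$ into $\Fl(\Delta)$); the two are equivalent by injectivity of $\iota_*$ (\cref{prop:Petbasis}(b)), or the same $\mathrm{cor:PSisPet}$ argument gives the $\Pet(\Delta)$-version directly. With that noted, both arguments work over $\IQ$ equivariantly and over $\IZ$ non-equivariantly, exactly as you say.
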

\begin{proof}
For $J\subset I\subset\Delta$, the inclusions $\Fl(J)\overset{i'}\hookrightarrow\Fl(I)\overset i\hookrightarrow\Fl(\Delta)$
are $\mathbb C^*$-equivariant for the action given by the cocharacters
$h_J,h_I$ and $h$,  respectively. By \Cref{cor:PSisPet},
we have $i'(\Pet(J))=\Pet_J\subset\Pet(I)$ and $i(i'(\Pet(J)))=\Pet_J\subset\Pet(\Delta)$,
and consequently $[\Pet_J]_{\mathbb C^*}=i_*(i'_*([\Pet(J)]_{\mathbb{C^*}}))=i_*([\Pet_J]_{\mathbb{C}^*})$
in $H_*^{\mathbb{C}^*}(\Fl(\Delta))$. Then part (a) follows because the $\mathbb{C}^*$-equivariance may be replaced by the 
$S_I$, respectively $S$-equivariance, as explained above.
Part (b) follows from \cref{schubStab} and the commutativity of the diagram,
\begin{center}
\begin{tikzcd}
\Pet(I)\arrow[r,"j",hook]\arrow[d,hook,"\iota"]&\Pet(\Delta)\arrow[d,hook,"\iota"]\\
\Fl(I)\arrow[r,"i",hook] &\Fl(\Delta)
\end{tikzcd}
\end{center}
utilizing again that all maps are $\mathbb{C}^*$-equivariant.

In the non-equivariant case, all (co)homology morphisms are defined over $\mathbb{Z}$, and the
classes $[\Pet_I]$ and $p_I$ are integral, by their definition. This finishes the proof.
\end{proof}

\begin{rem}
\label{rmk:nomap}
The 
reader may wonder whether an
algebra map $H^*_S(\Fl(\Delta)) \to H^*_{S_I}(\Fl(I))$
may be directly constructed from the inclusion $i: \Fl(I) \to \Fl(\Delta)$,
equivariant with respect to a map $\varphi_I: S_I \to S$. 
The requirement that $i$ is $\varphi_I$-equivariant implies that the differential 
$d\varphi_I: Lie(S_I)\to Lie(S)$ must send $h_I\mapsto h$.
(Note that this is {\em not} the restriction of the natural map $Lie(T_I) \hookrightarrow Lie(T)$.)
The existence of a lift $S_I \to S$ of this Lie algebra map cannot be guaranteed.
For instance, consider the inclusion $G_I:=SL_3 \subset G:=SL_4$
given by the natural embedding of Dynkin diagrams $A_2 \subset A_3$.
The tori $S_I$ and $S$ are the images of cocharacters 
\[ h_I(z) = \begin{pmatrix} z^2 & 0 & 0 \\ 0 & 1 & 0 \\ 0 & 0 & z^{-2} \end{pmatrix} \quad \textrm{ and } \quad 
h(z) = \begin{pmatrix} z^3 & 0 & 0 & 0  \\ 0 & z & 0 & 0 \\ 0 & 0 & z^{-1} & 0 \\ 0 & 0 & 0 & z^{-3} \end{pmatrix} \/, \]
respectively. In this case there is no group homomorphism $\varphi_I:S_I \to S$  satisfying $\varphi_I(h_I(z))=h(z)$.
\end{rem}

\begin{cor}\label{cor:stabZ} Let $I \subset \Delta$ and assume that 
the map $Lie(S_I)\to Lie(S)$ sending $h_I \mapsto h$ lifts to a map $\varphi_I: S_I\to S$.
Then the push-forward and pull-back maps
\[ j^*: H^*_S(\Pet(\Delta)) \to H^*_{S_I}(\Pet(I)) \textrm{ and } j_*: H_*^{S_I}(\Pet(I)) \to 
H_*^{S}(\Pet(\Delta)) \]
may be defined with $\mathbb{Z}$ coefficients. In particular, 
the statements in \Cref{prop:naturality} also hold over $\mathbb{Z}$.
\end{cor}
\begin{proof} The claim follows because the $G_I$-equivariant map $i: \Fl(I) \to \Fl(\Delta)$ 
from \eqref{eq:flaginclusion} restricts to the $\varphi_I$-equivariant map 
$j: \Pet(I) \to \Pet(\Delta)$. Then $j_*$ and $j_*$ may be defined over $\mathbb{Z}$.
\end{proof}

\begin{rem}
The results of this section can be extended to the case of reductive groups $G$ and a one-dimensional torus $S\subset T$ satisfying $\alpha|S=\beta|S$ for all simple roots $\alpha,\beta$.
For $G$ semisimple, there is a unique $S\subset T$ satisfying this condition. 
For an arbitrary reductive group $G$, this may not determine $S$ uniquely.

It is common in the literature on type A Peterson varieties to use the group $G=GL_n$ 
and the one-dimensional torus $S=diag(z^n,z^{n-1}, \ldots, z)$.
In this case, we have an identification between the one dimensional 
subtori of $GL_n$ and $GL_{n+1}$ given by 
$diag(z^n,z^{n-1},\ldots, z)\mapsto diag(z^{n+1},z^n,\ldots, z)$. 
Then  
the diagram in \eqref{commQ}, and hence the statements in 
\Cref{prop:naturality}, hold over $\mathbb{Z}$.
\end{rem}

\section{Intersection multiplicities}
\label{sec:intmult}
Different choices of Coxeter elements $v_I$ lead to different bases $\{ p_I = \iota^*\sigma_{v_I} \}$ for 
$H^*_S(\Pet; \mathbb Q)$.
By \Cref{thm:duals}, the transition matrix between 
two such bases $\{ p_I \} $ and $\{ p'_I \}$ is diagonal,
with entries given by ratios
\[
\frac{m(v_I)}{m(v'_I)} = \frac{\langle p_I , [\Pet_I]_S \rangle}{\langle p_I' , [\Pet_I]_S \rangle}.
\]
It is natural to ask whether there are choices for the Coxeter elements $v_I$ for which
$m(v_I) = 1$,
and more generally, to ask for formulae for the $m(v_I)$.
In \cref{rootHts}, we give a formula for $m(v_I)$ in terms of
the localization of the Schubert variety $X^{v_I}$ at the point $w_I$,
and in \cref{ClassicalFormulae}, we use this formula to compute $m(v_I)$ for certain Coxeter elements $v_I$.
\Cref{ClassicalFormulae} settles Question 1 of \cite{insko.tymoczko:intersection.theory} for all classical types.
As a further application of \cref{rootHts}(b),
we show in \cref{Acounter} that not all choices of $v_I$ lead to $m(v_I)=1$ in type A,
and in \cref{BCcounter} that for $I\in\{B_2, C_2\}$,
there is no Coxeter element $v_I$ for which $m(v_I)=1$.

\subsection{The Exponents of a Dynkin diagram}
\label{carter}
Let $\Delta$ be a Dynkin diagram with $n$ nodes. 
The exponents $m_1,\cdots,m_n$ of $\Delta$ are fundamental invariants, appearing in many contexts.
We will utilize the following two characterizations found in \cite[Ch. 10]{carter:simple}; see also \cite{kostant:principal}:
\begin{enumerate}
\item
Let $\mathfrak g$ be the Lie algebra with Dynkin diagram $\Delta$,
and let $\{e,f,h\}$ be an $\mathfrak{sl}_2$-triple in $\mathfrak g$,
such that $e$ is a regular nilpotent element in $\mathfrak g$;  see \cite{morozov,mcgovern.collingwood:nilpotent.orbits}.
The $\mathfrak{sl}_2$-decomposition of $\mathfrak g$ is precisely $\oplus V(2m_i)$,
where $V(k)$ denotes the irreducible finite dimensional $\mathfrak{sl}_2$-representation with highest weight $k$.
\item
Let $a_i$ be the number of roots of height $i$ in $\Phi_\Delta^+$.
Then $(a_1,\cdots, a_k)$ is a partition, and the conjugate partition is precisely $(m_1,\cdots,m_n)$.
\end{enumerate}
\begin{table}[ht]
\begin{tabular}{ |c|c||c|c| } 
 \hline
Diagram       & Exponents               & Diagram & Exponents\\ \hline
$ A_n$        & $1, 2,\cdots, n$        & $F_4$   & $1,5,7,11$\\
$ B_n$, $C_n$ & $1, 3,\cdots,2n-1$      & $E_6$   & $1, 4, 5, 7, 8, 11$\\
$D_n$         & $1, 3,\cdots,2n-3, n-1$ & $E_7$   & $1,5,7,9,11,13,17$\\
$G_2$         & $1, 5$                  & $E_8$   & $1, 7, 11, 13, 17, 19, 23, 29$\\ \hline
\end{tabular}
\caption{The exponents of Dynkin Diagrams; see \cite[Ch. 10]{carter:simple}.}
\label{table:exponents}
\end{table}
Throughout this section, we will denote by $m_1,\cdots,m_n$, the exponents of $\Delta$.

\begin{lemma}
\label{wtsADelta}
The weights for the $S$-action on $Lie(G^e)$ are precisely $m_1 t,\cdots, m_n t$.
\end{lemma}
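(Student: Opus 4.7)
The plan is to identify $Lie(G^e)$ with the kernel of $\operatorname{ad}(e)$ on $\mathfrak{g}$, apply the $\mathfrak{sl}_2$-decomposition associated to a principal $\mathfrak{sl}_2$-triple containing $e$, and then translate $h$-weights into $S$-weights via the definition of $t$.

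More precisely, first I would recall that $Lie(G^e)=\mathfrak g^e:=\ker(\operatorname{ad} e)\subset\mathfrak g$, and that since $e$ is regular nilpotent we have $\dim\mathfrak g^e=n$. Next, by the Jacobson--Morozov theorem, $e$ is part of an $\mathfrak{sl}_2$-triple $\{e,f,h'\}$, and since $e$ is principal, the semisimple element $h'$ can be chosen so that $\alpha(h')=2$ for every simple root $\alpha\in\Delta$ (see e.g.\ \cite{kostant:principal}). This element coincides with the cocharacter $h=\sum_{\alpha\in\Phi^+_\Delta}\alpha^\vee$ defining the one-dimensional torus $S$; indeed, both elements of $\mathfrak h$ are characterized by the condition $\alpha(\cdot)=2$ for $\alpha\in\Delta$. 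Thus the $S$-action on $\mathfrak g$ coincides, up to the identification $t=\alpha|_S$, with the grading by the semisimple element of the principal $\mathfrak{sl}_2$-triple.

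Now I would invoke characterization (1) of the exponents recalled in \S\ref{carter}: as an $\mathfrak{sl}_2$-module, $\mathfrak g$ decomposes as $\bigoplus_{i=1}^n V(2m_i)$. In each irreducible summand $V(2m_i)$, the kernel of $\operatorname{ad} e$ is the highest weight line, which has $h$-eigenvalue $2m_i$. Therefore
\[
\mathfrak g^e=\bigoplus_{i=1}^n V(2m_i)^e,
\]
and the $h$-weights occurring on $\mathfrak g^e$ are exactly $2m_1,\ldots,2m_n$. Finally, since $\alpha(h)=2$ for $\alpha\in\Delta$, the character $t=\alpha|_S\in X(S)$ corresponds under the cocharacter $h:\mathbb{C}^*\to S$ to the weight $z\mapsto z^2$. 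Hence an $h$-weight of $2m_i$ translates into an $S$-weight of $m_i t$, yielding the claimed list $m_1t,\ldots,m_nt$.

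The main (essentially only) nontrivial input is the Kostant principal $\mathfrak{sl}_2$-decomposition $\mathfrak g\cong\bigoplus V(2m_i)$; everything else is a matching of normalizations between the semisimple element of the principal $\mathfrak{sl}_2$-triple and the cocharacter $h$ defining $S$, together with the factor of $2$ absorbed into the definition of $t$.
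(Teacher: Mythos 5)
Your proof is correct and follows essentially the same route as the paper: identify $Lie(G^e)$ with $\ker(\operatorname{ad} e)$, take the principal $\mathfrak{sl}_2$-triple with $e$ nilpositive and $h$ neutral, use Kostant's decomposition $\mathfrak g \cong \bigoplus V(2m_i)$ to see that the kernel of $\operatorname{ad} e$ is the span of highest weight vectors, and convert $h$-eigenvalues to $S$-weights via $t = 2\varpi$. The only cosmetic difference is that the paper phrases the weight translation through the fundamental coweight $\varpi$ dual to $h$, whereas you phrase it directly via the identity $\alpha(h)=2$; these are the same normalization.
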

\begin{proof}
Recall that $S\subset T$ corresponds to the cocharacter $h$
satisfying $\alpha(h)=2$ for all $\alpha\in\Delta$, and that $[h,e]=2e$.
Identifying $\X(S)$ as a lattice in $Lie(S)^*$, we view $t$ as an element of $Lie(S)^*$.
Let $\varpi\in Lie(S)^*$ be the fundamental weight dual to $h$, i.e., given by $\varpi(h)=1$.
Comparing the weights of the $h$-action and $S$-action on $e$, we deduce that $t=2\varpi$.

Consider now an $\mathfrak{sl}_2$-triple in $\mathfrak g$,
with $e$ (resp. $h$) as the nilpositive (resp. neutral) element.
Since $e$ is a principal nilpotent element of $\mathfrak g$,
the decomposition of $\mathfrak g$ as an $\mathfrak{sl}_2$-representation is given by
$
\mathfrak g=\oplus V(2m_i\varpi)=\oplus V(m_i t).
$
Now, simply observe that
\begin{align*}
Lie(G^e)=\set{x\in Lie(U)}{[e,x]=0}=\ker(ad(e))
\end{align*}
is spanned by the highest weight vectors in $\mathfrak g$,
whose weights are precisely $m_1t,\cdots,m_n t$.
\end{proof}

\begin{lemma}
\label{eulerClassTangent}
The $S$-equivariant Euler class of the tangent space $T_{w_I}\Fl(I)$ is $\left(\prod m_i!\right)t^N$, where $N=\dim\Fl(I)$.
\end{lemma}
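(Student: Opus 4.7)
The plan is to decompose the tangent space $T_{w_I}\Fl(I)$ into $T_I$-weight spaces, compute the $T_I$-equivariant Euler class as a product of roots, restrict to the subtorus $S_I$ (or equivalently identify the restriction map $\X(T_I)\to\X(S_I)$), and finally recognize the resulting integer coefficient via a combinatorial identity involving the exponents $m_j$ of $I$.

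First I would recall that for the flag manifold $\Fl(I)$ at the $T_I$-fixed point $w_I$, the tangent space decomposes as $T_{w_I}\Fl(I)=\bigoplus_{\alpha\in\phi_I^+}\mathfrak g_{-w_I(\alpha)}$. Because $w_I$ is the longest element of $W_I$, it sends $\phi_I^+$ bijectively onto $\phi_I^-$, so the set $\{-w_I(\alpha):\alpha\in\phi_I^+\}$ is just $\phi_I^+$ again. Hence the $T_I$-equivariant Euler class is $\prod_{\alpha\in\phi_I^+}\alpha$.

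Next I would restrict from $T_I$ to $S_I$. By the convention set in \cref{subsec:petersonVariety}, the restriction map $\X(T_I)\to\X(S_I)$ sends $\alpha\mapsto t$ for each simple root, and therefore $\beta\mapsto ht(\beta)\, t$ for an arbitrary positive root $\beta$. Consequently the $S$-equivariant Euler class of $T_{w_I}\Fl(I)$ equals $\Bigl(\prod_{\alpha\in\phi_I^+}ht(\alpha)\Bigr)\, t^N$, with $N=|\phi_I^+|=\dim\Fl(I)$.

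It remains to prove the combinatorial identity $\prod_{\alpha\in\phi_I^+}ht(\alpha)=\prod_{j=1}^n m_j!$. Grouping the roots by height, the left hand side equals $\prod_i i^{a_i}$, where $a_i$ is the number of positive roots of height $i$ in $\phi_I$. By the second characterization of exponents recalled in \cref{carter}, the sequence $(a_1,a_2,\ldots)$ is a partition whose conjugate is $(m_1,\ldots,m_n)$. The standard identity for conjugate partitions---namely $\prod_j \lambda_j!=\prod_i i^{\lambda'_i}$ for a partition $\lambda$ with conjugate $\lambda'$, obtained by writing $\lambda_j!=\prod_{i\le \lambda_j}i$ and swapping the order of the product---then gives exactly $\prod_i i^{a_i}=\prod_j m_j!$, completing the proof. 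The only substantive step is this combinatorial identity, which is essentially bookkeeping once the two characterizations of exponents from \cref{carter} are available; the geometric content reduces to the well-known tangent weight computation at a $T$-fixed point of $G/B$.
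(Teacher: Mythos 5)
Your proposal is correct and follows essentially the same route as the paper: decompose the tangent space into root spaces indexed by $\phi_I^+$, observe that restriction $\X(T_I)\to\X(S_I)$ sends $\beta\mapsto ht(\beta)\,t$, and invoke the conjugate-partition characterization of the exponents from \cref{carter}. The only difference is that you spell out the elementary identity $\prod_i i^{a_i}=\prod_j m_j!$ explicitly, which the paper leaves implicit after citing \cref{carter}.
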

\begin{proof}
Observe that the map $\X(T)\to \X(S)$ is given by $\alpha\mapsto t$, for all $\alpha\in\Delta$.
Consequently, the $T$-weight space $\mathfrak g_\alpha$, for $\alpha\in\Phi^+_I$, is an $S$-weight space of weight $ht(\alpha) t$.
The tangent space at $w_I$ admits a $T$-decomposition,
\begin{equation*}
T_{w_I}(G/B)=\bigoplus\limits_{\alpha\in\Phi_I^+}\mathfrak g_\alpha \/;
\end{equation*}
hence the $S$-equivariant Euler class of $T_{w_I}(G/B)$ is
$t^{a_1}(2t)^{a_2}\ldots (kt)^{a_k}$,
where $a_i$ is the number of roots of height $i$ in $\Phi_I^+$.
Following \cref{carter}, the partition $(a_1,\ldots,a_k)$ is conjugate to $(m_1,\ldots,m_n)$;
consequently, the $S$-equivariant Euler class of $T_{w_I}(G/B)$ is precisely $m_1!\,m_2!\cdots m_n!\,t^N$.
\end{proof}

We are now ready to calculate the multiplicities $m(v_I)$ using the map in cohomology obtained by restricting to the fixed point set.

\begin{prop}
\label{rootHts}
Let $\iota_{w}^*: H_S^*(G/B) \rightarrow H_S^*(w)$ be the map induced by the inclusion $wB/B\hookrightarrow G/B$.
Define $b\in \mathbb Z$ by 
$\iota_{w_I}^*\sigma_{v_I}=b t^n$. 
\begin{enumerate}[label=(\alph*)]
\item
We have
$m(v_I)=\dfrac b{m_1\cdot \ldots \cdot m_n}$.
\item
Suppose $X^{v_I}$ is smooth at $w_I$.
Let $\{\beta_1,\cdots,\beta_n\}=\set{\alpha\in\Phi_I^+}{s_\alpha \not\leq v_Iw_I}$.
Then
\begin{equation*}
m(v_I)=\frac{ht(\beta_1)\cdot \ldots \cdot ht(\beta_n)}{m_1 \cdot \ldots \cdot m_n}.
\end{equation*}
\end{enumerate}
\end{prop}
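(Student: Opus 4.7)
The plan is to combine the duality theorem (\Cref{intDual}) with equivariant localization at the $S$-fixed point $w_I$, exploiting that $w_I$ lies in the smooth cell $\Pet_I^\circ\subset\Pet_I$.

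For part (a), I would start from the identity $\sigma_{v_I}\cup\eta_I=m(v_I)\,\tau_{w_I}$ in $H^*_S(G/B)$ supplied by \Cref{intDual}, where $\eta_I$ and $\tau_{w_I}$ are the Poincar\'e duals of $[\Pet_I]_S$ and $[w_I]_S$. Applying the localization $\iota_{w_I}^*$ and using $\iota_{w_I}^*\tau_{w_I}=\mathrm{Eul}(T_{w_I}(G/B))$, this becomes
\[
\iota_{w_I}^*\sigma_{v_I}\cdot\iota_{w_I}^*\eta_I=m(v_I)\,\mathrm{Eul}(T_{w_I}(G/B)).
\]
Since $w_I\in\Pet_I^\circ$ is a smooth point of $\Pet_I$ (\Cref{prop:vectorSpace}), the self-intersection formula yields $\iota_{w_I}^*\eta_I=\mathrm{Eul}(T_{w_I}(G/B))/\mathrm{Eul}(T_{w_I}\Pet_I)$, and canceling gives $\iota_{w_I}^*\sigma_{v_I}=m(v_I)\,\mathrm{Eul}(T_{w_I}\Pet_I)$. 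The Stability Theorem (\Cref{cor:PSisPet}) identifies $\Pet_I^\circ=(G_I^{e_I})^\circ\cdot w_I$, so the orbit map realizes $T_{w_I}\Pet_I\cong Lie((G_I^{e_I})^\circ)$ as $S$-representations. Because both $h$ and $h_I$ satisfy $\alpha(h)=\alpha(h_I)=2$ for $\alpha\in I$, they act identically on $\mathfrak g_I$, so the proof of \Cref{wtsADelta} applied internally to $\mathfrak g_I$ shows the $S$-weights are $m_1t,\ldots,m_nt$. Therefore $\mathrm{Eul}(T_{w_I}\Pet_I)=(m_1\cdots m_n)t^n$, and part (a) follows.

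For part (b), suppose $X^{v_I}$ is smooth at $w_I$; then $\iota_{w_I}^*\sigma_{v_I}$ equals the Euler class of the normal bundle $N_{X^{v_I}/(G/B),w_I}$. By Kumar's tangent space formula, $T_{w_I}X^{v_I}=\bigoplus_{\alpha\in\phi^+,\,w_Is_\alpha\geq v_I}\mathfrak g_{-w_I\alpha}$, so the normal weights are indexed by $\{\alpha\in\phi^+:w_Is_\alpha\not\geq v_I\}$. I would first show that only $\alpha\in\phi_I^+$ contribute: for each $\alpha_j\notin I$ the fundamental coweight $\varpi_j^\vee$ is $W_I$-fixed, so $\langle w_I\alpha,\varpi_j^\vee\rangle=\langle\alpha,\varpi_j^\vee\rangle$; thus for $\alpha\in\phi^+\setminus\phi_I^+$ the coefficient of any $\alpha_j\notin I$ in $w_I\alpha$ matches that of $\alpha$ and is positive, whence $w_I\alpha\in\phi^+$ and $w_Is_\alpha>w_I\geq v_I$. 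For $\alpha\in\phi_I^+$, the elements $w_Is_\alpha$ and $v_I$ both lie in $W_I$, on which right-multiplication by $w_I$ reverses Bruhat order, so $w_Is_\alpha\geq v_I\iff s_{-w_I\alpha}\leq v_Iw_I$. The involution $\alpha\mapsto\beta:=-w_I\alpha$ on $\phi_I^+$ then bijects the contributing $\alpha$'s with $\{\beta\in\phi_I^+:s_\beta\not\leq v_Iw_I\}=\{\beta_1,\ldots,\beta_n\}$. The $S$-weight of $\mathfrak g_{-w_I\alpha}=\mathfrak g_\beta$ is $ht(\beta)\,t$, so $\mathrm{Eul}(N_{X^{v_I}/(G/B),w_I})=ht(\beta_1)\cdots ht(\beta_n)\,t^n$ and $b=ht(\beta_1)\cdots ht(\beta_n)$. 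Combined with part (a) this yields the formula.

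The main technical hurdle will be justifying $\iota_{w_I}^*\eta_I=\mathrm{Eul}(T_{w_I}(G/B))/\mathrm{Eul}(T_{w_I}\Pet_I)$ in the presence of singularities of $\Pet_I$ elsewhere: the usual self-intersection formula requires global smoothness, but here only a local version at $w_I$ is needed. This should be handled by equivariant excision, restricting to a $T$-stable open neighborhood $U\ni w_I$ disjoint from the singular locus of $\Pet_I$; on such a $U$ the intersection $\Pet_I\cap U$ is smooth, $\eta_I|_U$ coincides with its Poincar\'e dual in $H^*_S(U)$, and the smooth self-intersection formula applies directly.
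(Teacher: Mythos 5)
Your proposal is correct and takes essentially the same route as the paper: localize the identity $\sigma_{v_I}\cup\eta_I=m(v_I)\tau_{w_I}$ from \Cref{intDual} at $w_I$, compute $\mathrm{Eul}(T_{w_I}\Pet_I)=m_1\cdots m_n\,t^n$ via \Cref{wtsADelta} (with stability supplying the identification of $T_{w_I}\Pet_I$ with $Lie(G_I^{e_I})$), and for part (b) use the smooth-point interpretation of $\iota_{w_I}^*\sigma_{v_I}$ as $\mathrm{Eul}(N_{X^{v_I}/(G/B)})|_{w_I}$ together with Kumar's tangent-space description. The only stylistic difference is that the paper invokes \Cref{prop:naturality} once at the outset to reduce to $\Delta=I$ (so $\phi^+=\phi_I^+$ and $w_I=w_0$), which lets it skip your explicit argument, via fundamental coweights, that roots outside $\phi_I^+$ do not contribute to the normal space; your version is a hair more general but arrives at the same conclusion. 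Your concern about restricting $\eta_I$ at a fixed point of a globally singular $\Pet_I$ is real but standard; the paper handles it implicitly by appealing to the smoothness of the cell $\Pet_I^\circ$ from \Cref{prop:paving}, exactly as you propose via excision.
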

\begin{proof}
Recall from  \cref{intDual} that 
\begin{equation}
\label{intersectionEqn}
\sigma_{v_I}\cup \eta_I  = m(v_I) \tau_{w_I},
\end{equation}
where  $\eta_I$ and $\tau_{w_I}$ are Poincar\'e dual to $[\Pet_I]_S$ and $[w_I]_S$, respectively, in $H_S^*(G/B)$.

We restrict both sides to $w_I$ under the map $\iota_{w_I}^*: H_S^*(G/B) \rightarrow H_S^*(w_I)$.
By \cref{prop:naturality}, we may assume $\Delta=I$,
so that the tangent space $T_{w_I}\Pet_I=Lie(G^e)$ has $S$-weights $m_1t,\cdots,m_nt$ as described in \cref{wtsADelta}.
Following \cref{prop:paving,eulerClassTangent},
we see that the $S$-equivariant Euler class at $w_I$ of the normal bundle of $\Pet$ is 
$\dfrac{m_1!m_2! \cdot \ldots \cdot m_n!t^N}{m_1m_2 \cdot \ldots \cdot m_n t^n}$.
Applying $\iota^*_{w_I}$ to both sides of \cref{intersectionEqn} yields
\begin{align*}
\dfrac{m_1!m_2!\cdot \ldots \cdot m_n!t^N}{m_1m_2 \cdot \ldots \cdot m_n t^n}\,\iota^*_{w_I}\sigma_{v_I}=m(v_I)\iota^*_{w_I}\tau_{w_I}.
\end{align*}
Using \cref{eulerClassTangent}, we have $\iota^*_{w_I}\tau_{w_I}=m_1!m_2!\cdots m_n!t^N$, and part (a) follows.

For part (b), 
since $X^{v_I}$ is smooth at $w_I$, the normal space of $X^{v_I}$ at $w_I$ is spanned by
\begin{align*}
\set{\mathfrak g_\alpha}{\alpha\in\Phi^+_I,\,s_\alpha w_I\not\geq v_I}=
\set{\mathfrak g_\alpha}{\alpha\in\Phi^+_I,\,s_\alpha \not\leq v_Iw_I};
\end{align*}
see \cite[Cor 12.1.10]{kumar:book}.
Part (b) now follows from (a), along with the observation that
the map $\X(T)\to \X(S)$ is given by $\beta\mapsto ht(\beta)t$.
\end{proof}

\begin{example}
\label{Acounter}
Let $I=A_3$, and $v_I=s_1s_3s_2$.
Then $m(v_I)=2$.
\end{example}

\begin{example}
\label{BCcounter}
For $I\in\{B_2,C_2\}$, we have $m(v_I)=2$ for every Coxeter element $v_I$.
\end{example}

In \cite[Question 1]{insko.tymoczko:intersection.theory}, Insko and Tymoczko 
conjecture that $m(v_I)=1$ for certain Coxeter elements, when $I$ is contained in some sub-diagram of type $A$, 
and that $m(v_I)=2$ otherwise.
As an application of \cref{rootHts}, we compute $m(v_I)$ for one Coxeter element in each Dynkin diagram; 
this formula proves their conjecture in type A, and disproves it in other cases.

%
\begin{thm}
\label{ClassicalFormulae}
\label{cor:connComp}
\begin{enumerate}[label=(\alph*),leftmargin=*]
\item
Let $I$ be a connected Dynkin diagram with the standard labelling
 (see \cite{bourbaki:Lie46}), and set $v_I=s_1s_2\cdots s_n$.
Then,
\begin{align*}
m(v_I)=
\begin{cases}
1      &\text{if }I=A_n,\\
2^{n-1}&\text{if }I=B_n, C_n,\\
2^{n-2}&\text{if }I=D_n,\\
72=2^3 \cdot 3^2     &\text{if }I=E_6,
\end{cases}
&&
m(v_I)=
\begin{cases}
864=2^5 \cdot 3^3     &\text{if }I=E_7,\\
51840=2^7 \cdot 3^4 \cdot 5  &\text{if }I=E_8,\\
48=2^4 \cdot 3     &\text{if }I=F_4,\\
6 = 2 \cdot 3     &\text{if }I=G_2.
\end{cases}
\end{align*}
\item
Let $I_1,\cdots,I_k$ be the connected components of a Dynkin diagram $I$,
and let $v_1,\cdots,v_k$ be Coxeter elements for $I_1,\cdots,I_k$ respectively.
Then $v:=v_1\cdots v_k$ is a Coxeter element for $I$, and $m(v)=\prod\limits_{j=1}^km(v_j)$.
\end{enumerate}
\end{thm}
\begin{proof}
[Proof of \cref{ClassicalFormulae}]
If $I$ is a diagram of classical type, the variety $X^{v_I}$ is smooth at $w_I$,
cf. \cite[Thm 3]{insko.tymoczko:intersection.theory}.
Consequently, we can use \cref{rootHts}(b) to compute $m(v_I)$.
We show the details of the calculations in \cref{classicalCalculations}.
For the exceptional cases, a computer calculation suffices:
we use the localization formula (cf. \cite{AJS,billey:kostant}) to compute $\iota^*_{w_I}\sigma_{v_I}$, and apply \cref{rootHts}(a).

Following \cref{prop:naturality}, we may assume $\Delta=I$.
The integer $m(v)$ is the multiplicity of the intersection of $X^v$ with $\Pet$.
We have
\begin{align*}
\Fl(I)=\prod\limits\Fl(I_j),&& X^v=\prod X^{v_j},&& \Pet(I)=\prod\Pet(I_j),
\end{align*}
and hence the multiplicity $m(v)$ is the product of the multiplicities $m(v_j)$.
\end{proof}

\begin{rem}
We conjecture for all Coxeter elements $v_I$
a type-independent formula for the intersection multiplicity, namely:
\begin{equation}
\label{generalFormula}
m(v_I)=\dfrac{|\mathcal R(v_I)||W_I|}{|I|! \det(C_I)}=\mathcal R(v_I)\prod_{\alpha\in I} a_\alpha.
\end{equation}
Here $\mathcal R(v_I)$ is the set of reduced expressions for $v_I$,
$W_I$ is the Weyl group of $I$,
$C_I$ is the Cartan matrix of the Dynkin diagram $I$,
and the integers $a_\alpha$ are the coefficients of the highest root $\theta_I=\sum_{\alpha\in I} a_\alpha \alpha$ of $I$.
The second equality follows from \cite[p.~297]{bourbaki:Lie46}.
For the Coxeter elements in \cref{ClassicalFormulae},
we have verified this formula with type-by-type calculations.
 For a type-independent proof, see \cite{goldin.singh}.
\end{rem}

\begin{rem}
The formula $m(v_I)=1$ for $I=A_n$ was first obtained by Insko in \cite{insko:schubert},
who proved that the scheme-theoretic intersection $X^{v_I}\cap\Pet_I$ is reduced.
\end{rem}

\begin{cor}
\label{ITconj}
Suppose $I$ is contained in some sub-diagram $J$ of type A,
and let $v$ be the Coxeter element of $I$ obtained by multiplying the simple reflections in increasing order (for the standard type A labelling of nodes in $J$).
Then $m(v)=1$.
\end{cor}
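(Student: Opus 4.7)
The plan is to reduce this corollary directly to \cref{ClassicalFormulae}, exploiting that a sub-diagram $I \subset J$ of a type A diagram is automatically a disjoint union of sub-intervals, each of which is itself of type A with the standard labelling inherited from $J$.

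First I would write $I = I_1 \sqcup \cdots \sqcup I_k$ as its decomposition into connected components; since $J$ is of type A and $I \subset J$, each $I_j$ is a consecutive interval of nodes in $J$, say with labels $a_j, a_j+1, \ldots, b_j$, so that $I_j$ is a Dynkin diagram of type $A_{b_j-a_j+1}$ with standard labelling. Next I would verify that the given Coxeter element $v$ of $I$, formed by multiplying the simple reflections $\{s_i : i \in I\}$ in increasing order of the $J$-labelling, factors as $v = v_1 v_2 \cdots v_k$, where $v_j = s_{a_j} s_{a_j+1} \cdots s_{b_j}$ is the standard increasing Coxeter element for $I_j$. This is immediate from the fact that simple reflections in different connected components commute: one may reorder the product of the $s_i$ by grouping the reflections from each $I_j$ together (without crossing non-commuting pairs), and within each $I_j$ the $J$-order is already increasing.

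Then I would apply \cref{ClassicalFormulae}(b) to obtain $m(v) = \prod_{j=1}^k m(v_j)$, and \cref{ClassicalFormulae}(a) to each factor, which gives $m(v_j) = 1$ because $I_j$ is of type $A_{|I_j|}$ and $v_j$ is precisely the standard increasing Coxeter element $s_1 s_2 \cdots s_{|I_j|}$ appearing there (up to the harmless relabelling that shifts the indices). Multiplying these together yields $m(v) = 1$.

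I do not expect any real obstacle here: both pieces of \cref{ClassicalFormulae} do essentially all the work. The only step that warrants a sentence of care is the factorization $v = v_1 \cdots v_k$, ensuring that the given $J$-increasing order on $I$ really does correspond, as an element of $W_I$, to the product of the standard Coxeter elements on the components. After that the conclusion is a one-line computation.
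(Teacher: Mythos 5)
Your proof is correct and follows essentially the same route as the paper: decompose $I$ into connected components, observe each is of type $A$ with the inherited standard labelling, factor $v$ as a product of the standard increasing Coxeter elements for the components, and apply both parts of \cref{ClassicalFormulae}. You spell out the commutation argument behind the factorization $v = v_1\cdots v_k$ a bit more explicitly than the paper does, but the underlying argument is the same.
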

\begin{proof}
Observe that each connected component $I_j\subset I$ is of type A.
Let $v_j$ be the Coxeter element of $I_j$ obtained by multiplying the simple reflections in increasing order
(for the standard type A labelling of nodes in $I_j$), so that $v=\prod v_j$.
Following \cref{ClassicalFormulae}, we have $m(v_j)=1$, and $m(v)=\prod m(v_j)=1$.
\end{proof}

\appendix
\section{Two Definitions of the Peterson Variety} \label{sec:appendix} 
In this section we recall the affine paving of the Peterson variety (\cref{sec:paving}),
and we show in  \cref{equalDefn} that the two definitions of the Peterson variety,
\begin{equation}\label{2defns}
\begin{split}
\quad \quad \Pet  & := \overline{G^e. w_0B}\quad \/, \\
\quad \quad \Pet(\Delta)& := \set{gB\in G/B}{Ad(g^{-1})e \in Lie(B)\oplus\bigoplus\limits_{\alpha\in\Delta}\mathbb C e_{-\alpha}} \/,
\end{split}
\end{equation}
agree. These results are well-known to experts,
but either some statements are only implicitly present in the literature,
or we present slightly different proofs. 
A key point is the 
irreducibility of $\Pet(\Delta)$, which we prove utilizing  
results of Kostant \cite{kostant:flag}.  
We also present in \cref{rem:irred} an alternate proof
following \cite{anderson.tymoczko,precup,abe.fujita.zeng}, 
as explained to us by B\u{a}libanu. Their arguments extend to
the wider setting of regular Hessenberg varieties.

\subsection{Paving by Affines}
\label{sec:paving}
For $I\subset \Delta$, let $\Pet(I)^\circ$ be 
the \emph{Peterson cell},
$
\Pet(I)^\circ=\Pet(I)\backslash\cup_{J\subsetneq I} \Pet(J).
$
Let $U_I$ be the unipotent Lie group corresponding to the Dynkin diagram $I$, and let $A_I$ be the centralizer of $e_I$ in $U_I$; see \cite{tits:LAconstruction}.

The following proposition was proved in various cases by Tymoczko 
 \cite[Theorem 4.3]{tymoczko:paving} and B\u{a}libanu \cite[Section 6]{balibanu:peterson}.
Following the exposition in \cite{balibanu:peterson},
we recall the main steps in the proof.
\begin{prop}
\label{prop:paving}
\begin{enumerate}[label=(\alph*),leftmargin=*]
\item
The group $A_I$ acts transitively and faithfully on $\Pet(I)^\circ$.
\item
$\Pet(\Delta)=\bigsqcup\limits_{I\subset \Delta} \Pet(I)^\circ$ is a paving by affines.
\item
The intersection $\Pet(\Delta)\cap X_w^\circ$ is nonempty if and only if $w=w_I$ for some subset $I\subset \Delta$.
\end{enumerate}
\end{prop}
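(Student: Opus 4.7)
The plan is to identify the Peterson cell $\Pet(I)^\circ$ with the orbit $A_I \cdot w_IB \subset G/B$, and to show that this orbit is affine of dimension $|I|$. Once this concrete description is in hand, all three parts follow essentially simultaneously, via the Bruhat decomposition. I would work in the order (c), (a), (b).

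For (c), suppose $gB \in \Pet(\Delta) \cap BwB/B$. Using the standard Bruhat parametrization, write $g = u\dot{w}$ modulo $B$ with $u \in U \cap \dot{w}U^-\dot{w}^{-1}$, and compute $Ad((u\dot{w})^{-1})\n = Ad(\dot{w}^{-1})Ad(u^{-1})\n$. The Peterson condition demands that this element lies in $\mathfrak b \oplus \bigoplus_{\alpha \in \Delta} \mathbb{C} e_{-\alpha}$; in particular, its components in negative non-simple root spaces must vanish. Since $\n = \sum_{\alpha\in\Delta}e_\alpha$ is a sum of simple positive root vectors, and since $w$ sends the positive roots to the roots $w(\Phi^+)$, examining each coefficient with Chevalley relations shows that $w$ must send every non-simple positive root to either a positive root or a simple negative root. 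The only elements of $W$ with this property are the longest elements $w_I$ of the parabolic subgroups $W_I$ for $I \subset \Delta$.

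For (a), fix $I \subset \Delta$ and refine the analysis above to $w=w_I$. One shows that the elements $u \in U \cap w_I U^- w_I^{-1}$ for which $uw_IB \in \Pet(\Delta)$ form exactly the subgroup $A_I = Z_{U_I}(e_I)$ acting by left multiplication on the base point $w_IB$. By Kostant's result (cited in \S\ref{subsec:petersonVariety}), $A_I$ is a connected unipotent group isomorphic to $\mathbb{C}^{|I|}$. The action of $A_I$ on $G/B$ is faithful because any unipotent stabilizer is contained in a Borel, and $A_I \cap w_I B w_I^{-1} = \{1\}$ by direct inspection of root spaces. Hence the orbit map $A_I \to A_I \cdot w_IB$ is an isomorphism of varieties, giving $\Pet(\Delta) \cap Bw_IB/B \simeq \mathbb{C}^{|I|}$. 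This set is open dense in $\Pet(I)$ (which is irreducible of dimension $|I|$ by \cref{prop:pet.irred}) and meets no $\Pet(J)$ for $J \subsetneq I$ by (c), so it coincides with $\Pet(I)^\circ$. Part (b) then follows: the Bruhat decomposition $G/B = \bigsqcup_w BwB/B$ restricts by (c) to $\Pet(\Delta) = \bigsqcup_I (\Pet(\Delta) \cap Bw_IB/B) = \bigsqcup_I \Pet(I)^\circ$, a paving by affines.

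The main obstacle is the explicit computation in (c) that forces $w = w_I$, and the companion computation in (a) that identifies the Peterson condition on $U \cap w_I U^- w_I^{-1}$ with the subgroup $A_I$. Both require careful bookkeeping with Chevalley commutation relations among negative root vectors in $\mathfrak{g}$, and matching weight-by-weight the components of $Ad(\dot{w}^{-1})Ad(u^{-1})\n$ against the allowed directions $\mathfrak b \oplus \bigoplus_{\alpha \in \Delta}\mathbb{C}e_{-\alpha}$. A secondary delicate point is that the description of $\Pet(I)^\circ$ as $\Pet(I)\setminus \bigcup_{J\subsetneq I}\Pet(J)$ must be reconciled with the orbit description $A_I \cdot w_IB$; this is where the disjointness from (c) is essential, since it guarantees that the cell $\Pet(\Delta)\cap Bw_IB/B$ does not leak into any smaller $\Pet(J)$.
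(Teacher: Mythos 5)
Your plan — identify $\Pet(I)^\circ$ with the orbit $A_I\!\cdot\!\dot w_IB$, then read off (a)–(c) from the Bruhat decomposition — is structurally the same as the paper's argument, but the paper does not attempt to re-derive the orbit description: it simply cites B\u{a}libanu's Proposition 6.3 for the equality $\Pet(I)^\circ = A_I\dot w_IB/B$, and then the rest is a few lines. Your "One shows that the elements $u\in U\cap w_IU^-w_I^{-1}$ for which $uw_IB\in\Pet(\Delta)$ form exactly the subgroup $A_I$" \emph{is} that proposition, and it is the main content, not a bookkeeping footnote. The computation is genuinely delicate: for $u\in A_I$ one has $Ad(u^{-1})\n = e_I + e_{\bar I} + (\text{higher-height corrections coming from }[Lie(U_I),e_{\bar I}])$, and after applying $Ad(\dot w_I^{-1})$ those correction terms can land in negative, non-simple root spaces; showing that the Peterson condition is nevertheless exactly equivalent to $u\in Z_{U_I}(e_I)$ requires a careful induction on root height (this is what B\u{a}libanu, Tymoczko, and Precup do). Asserting it and labeling it ``careful bookkeeping with Chevalley relations'' does not constitute a proof of part (a).

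There is also a concrete misstatement in your part (c). Writing $g=u\dot w$, the Peterson condition is on $Ad(\dot w^{-1})Ad(u^{-1})\n$, and since $Ad(\dot w^{-1})$ maps $\mathfrak g_\beta\to\mathfrak g_{w^{-1}\beta}$, the relevant condition on the combinatorics of $w$ is that $w^{-1}(\alpha)\in\Phi^+\cup(-\Delta)$ for every simple root $\alpha$; that is, it constrains where $w^{-1}$ sends \emph{simple} roots. Your formulation — that ``$w$ must send every non-simple positive root to either a positive root or a simple negative root'' — is a different (and incorrect) condition. Moreover, even with the correct condition, the step ``The only elements of $W$ with this property are the longest elements $w_I$'' is a nontrivial combinatorial lemma that you assert without proof; it is again part of the cited literature.

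Two smaller points. First, you invoke \cref{prop:pet.irred} both for irreducibility and for the claim that $\dim\Pet(I)=|I|$, but that lemma only gives irreducibility; the dimension count is usually obtained precisely from the orbit description $\Pet(I)^\circ = A_I\dot w_IB/B$ you are in the middle of establishing, so there is a risk of circularity if you lean on it too early. Second, the faithfulness argument in (a) is fine but can be done more cheaply, as in the paper: $U_I = U\cap w_IU^-w_I^{-1}$ acts simply transitively on $X_{w_I}^\circ$, so $A_I\subset U_I$ acts faithfully automatically.

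In short: your outline mirrors the paper's, but the two places you flag as ``the main obstacle'' are exactly where the paper defers to B\u{a}libanu's Proposition 6.3, and your proposal neither cites that result nor supplies a substitute argument. As written it is an incomplete sketch with one misstated condition, not a proof.
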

\begin{proof}
Following \cite[Prop~6.3]{balibanu:peterson}, we have $\Pet(I)^\circ=A_Iw_IB/B$, i.e., $A_I$ acts transitively on $\Pet(I)^\circ$.
Further, $U_I$ acts faithfully on the Schubert cell $X_{w_I}^\circ$,
and hence the action of $A_I\subset U_I$ is faithful at the point $w_I$.
Next, observe that $\Pet(I)^\circ$ is a principal space for $A_I$, hence is an affine space.
Finally, the observation $\Pet(I)^\circ\subset X_{w_I}^\circ$, along with part (b) implies that $\Pet(\Delta)\cap X_w^\circ$ is empty unless $w=w_I$ for some $I$.
\end{proof}

\subsection{Equivalence of two definitions of the Peterson variety}

\begin{lemma}
{\rm(\cite{kostant:flag})}
\label{lem:irredOp}
The variety $\Pet(\Delta)$ is locally irreducible at the point $1B$.
\end{lemma}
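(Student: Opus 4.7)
The plan is to reduce the local irreducibility of $\Pet(\Delta)$ at $idB$ to a statement about the principal nilpotent orbit, which can then be handled using Kostant's theory. The point $idB$ is a distinguished $S$-fixed point lying in no Peterson cell of positive dimension, so the cell paving alone is not useful here; instead, one must work directly with the defining equation $\mathrm{Ad}(g^{-1})e\in\mathfrak{f}$, where $\mathfrak{f}:=\mathfrak{b}\oplus\bigoplus_{\alpha\in\Delta}\mathbb{C}e_{-\alpha}$.

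First, I would pass to the opposite big Bruhat cell through $idB$. This identifies a neighborhood of $idB$ in $G/B$ with $U^{-}$, and the corresponding neighborhood of $idB$ in $\Pet(\Delta)$ with the closed affine subvariety
\[
\mathcal{V}:=\{u\in U^{-}\,:\,\mathrm{Ad}(u^{-1})e\in\mathfrak{f}\}.
\]
Next, I would show that the orbit morphism $\tau:U^{-}\to\mathfrak{g}$, $u\mapsto\mathrm{Ad}(u^{-1})e$, is an open embedding onto a Zariski-open subset of the principal nilpotent orbit $\mathcal{O}_{e}:=G\cdot e$. Injectivity follows because $G^{e}\subset U$ (the upper unipotent radical of $B$, by regularity of $e$), and hence $G^{e}\cap U^{-}\subseteq U\cap U^{-}=\{\mathrm{id}\}$; the fact that $\tau$ is an open embedding then follows from the matching of dimensions $\dim U^{-}=|\Phi^{+}|=\dim\mathcal{O}_{e}$ and standard properties of orbit maps. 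Under $\tau$, the variety $\mathcal{V}$ corresponds to the open subset $\tau(U^{-})\cap\mathfrak{f}$ of $\mathcal{O}_{e}\cap\mathfrak{f}$ containing $e$.

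The problem therefore reduces to showing that $\mathcal{O}_{e}\cap\mathfrak{f}$ is locally irreducible at $e$. Since $e$ is a regular nilpotent and the set of regular nilpotent elements is open in the nilpotent cone $\mathcal{N}$, the intersection $\mathcal{O}_{e}\cap\mathfrak{f}$ agrees with $\mathcal{N}\cap\mathfrak{f}$ in a neighborhood of $e$, i.e.\ with the zero fiber of the restricted adjoint quotient $\chi|_{\mathfrak{f}}:\mathfrak{f}\to\mathfrak{g}/\!/G\cong\mathbb{A}^{n}$. Here is where Kostant's results enter: $\chi|_{\mathfrak{f}}$ is flat and surjective, and its generic fibers (the regular semisimple Hessenberg, a.k.a.\ permutohedral, varieties) are smooth and irreducible. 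By a flat-family argument (combined with the transversality of the Kostant slice $\mathcal{S}=e+\mathfrak{g}^{f}$ associated to the principal $\mathfrak{sl}_{2}$-triple $(e,h,f)$, and the $\mathbb{C}^{*}$-contraction generated by $h$ to $e$ on the zero fiber), one deduces that the zero fiber of $\chi|_{\mathfrak{f}}$ is locally irreducible at $e$.

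The main obstacle is the last step: propagating irreducibility from the smooth generic fibers to the singular fiber at $0$. This is the content of Kostant's analysis in \cite{kostant:flag}; the contracting $\mathbb{C}^{*}$-action is essential, since without it one could only conclude connectedness of the fiber from connectedness of the total space, not irreducibility. The existence of a single contracting $\mathbb{C}^{*}$-action fixing $e$ reduces the local question to the existence of a unique irreducible component of $\chi|_{\mathfrak{f}}^{-1}(0)$ meeting every $\mathbb{C}^{*}$-orbit, which is verified by comparison with the generic fibers via the Kostant slice.
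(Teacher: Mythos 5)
The central reduction in your argument is incorrect: the map $\tau\colon U^-\to\mathfrak g$, $u\mapsto\mathrm{Ad}(u^{-1})e$, is \emph{not} an open embedding onto a Zariski-open subset of the principal nilpotent orbit $\mathcal O_e:=G\cdot e$. You justify this by asserting $\dim U^-=|\Phi^+|=\dim\mathcal O_e$, but in fact $\dim\mathcal O_e=\dim\mathfrak g-\mathrm{rank}(\mathfrak g)=2|\Phi^+|$, so $\tau(U^-)$ has only half the dimension of $\mathcal O_e$ and cannot be open in it. Consequently the reduction to local irreducibility of $\mathcal O_e\cap\bigl(\mathfrak b\oplus\bigoplus_{\alpha\in\Delta}\mathbb C e_{-\alpha}\bigr)$ does not hold, and indeed that variety (of dimension $|\Phi^+|+n$) is a much larger object than what one actually needs to control. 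The correct statement, which is Kostant's Theorem 17 and is what the paper uses, is that $\tau$ is an \emph{isomorphism} of $U^-$ onto the closed subvariety $(e+\mathfrak b^-)\cap\mathcal N$ of the affine-linear slice $e+\mathfrak b^-$, where $\mathcal N$ is the nilpotent cone. Intersecting the preimage with the Hessenberg space $\mathfrak b\oplus\bigoplus_{\alpha\in\Delta}\mathfrak g_{-\alpha}$ then identifies the open neighborhood $Z=\Pet(\Delta)\cap U^-B/B$ of $1.B$ with the $n$-dimensional variety $\bigl(e+\mathfrak h+\bigoplus_{\alpha\in\Delta}\mathfrak g_{-\alpha}\bigr)\cap\mathcal N^{\mathrm{reg}}$, and its irreducibility is exactly Kostant's Theorem 6.

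There is also a secondary issue with your final step: the flat-family plus $\mathbb C^*$-contraction argument is only a heuristic and is not carried out. The generic fibers of $\chi|_{\mathfrak f}$ are affine varieties, not the smooth projective regular semisimple Hessenberg varieties you invoke, and passing from irreducibility of nearby fibers to local irreducibility of a special fiber is precisely the nontrivial content one would need to supply (connectedness alone does not give irreducibility, as you note, and the $\mathbb C^*$-contraction argument needs to be made precise). The paper sidesteps all of this by citing Kostant's Theorem 6 directly for the irreducibility of $\bigl(e+\mathfrak h+\bigoplus_{\alpha\in\Delta}\mathfrak g_{-\alpha}\bigr)\cap\mathcal N^{\mathrm{reg}}$.
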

\begin{proof}
Let $U^-$ be the unipotent radical of the opposite Borel subgroup $B^-$, and let
$\mathcal N$ be the variety of nilpotent elements in $\mathfrak g$.
Consider the map $\eta:U^-\to\mathfrak g$ given by $u\mapsto Ad(u^{-1})\n$.
Following \cite[Thm. 17]{kostant:flag}, the map $\eta$ induces an isomorphism,
\begin{align*}
\eta:U^-\overset\sim\to(\n+\mathfrak b^-)\cap\mathcal N,
\end{align*}
where $\mathfrak b^-=Lie(B^-)$.
Recall that the $U^-$-orbit of $1B$ is an open set (namely the opposite Schubert cell) in 
$G/B$.
 Hence $Z:=\Pet(\Delta)\cap U^-B/B$ is an open neighborhood of $1B$ in $\Pet(\Delta)$,
and it suffices to show that $Z$ is irreducible. 
Let $\mathfrak f=\bigoplus\limits_{\alpha\in \Delta}\mathfrak g_{-\alpha}$,
and let $\mathcal N^{reg}$ denote the set of regular nilpotent elements in $\mathfrak g$.
Since the intersection $U^- \cap B$ is trivial, it follows that 
$Z=\eta^{-1}(\mathfrak b\oplus\mathfrak f)$. Then
\begin{align*}
Z&=\eta^{-1}(\mathfrak b\oplus\mathfrak f)\cong(\mathfrak b\oplus\mathfrak f)\cap(\n+\mathfrak b^-)\cap\mathcal N\\
&=(\n+\mathfrak h+\mathfrak f)\cap\mathcal N=(\n+\mathfrak h+\mathfrak f)\cap\mathcal N^{reg},
\end{align*}
where the last equality is from \cite[\S3.2]{kostant:flag}.
The result now follows from the irreducibility of $(\n+\mathfrak h+\mathfrak f)\cap\mathcal N^{reg}$,
cf. \cite[Thm. 6]{kostant:flag}.
\end{proof}

\begin{lemma}
\label[lemma]{prop:pet.irred}
The variety $\Pet(\Delta)$ is irreducible.
\end{lemma}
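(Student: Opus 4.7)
The plan is to show $\Pet(\Delta) = \Pet$, where $\Pet := \overline{G^e \cdot w_0 B}$ is irreducible by construction as the closure of a connected orbit. The containment $\Pet \subset \Pet(\Delta)$ follows by direct computation: for $u \in G^e$, $Ad((uw_0)^{-1}) \n = Ad(w_0^{-1}) \n \in \bigoplus_{\alpha \in \Delta} \mathbb C e_{-\alpha} \subset \mathcal H(\mathfrak b_\Delta)$, so $uw_0 B \in \Pet(\Delta)$; closedness of $\Pet(\Delta)$ then yields the containment of orbit closures. Moreover, using $(G^e)^\circ = A_\Delta$ and that $Z(G) \subset T$ fixes $w_0 B$, one identifies $G^e \cdot w_0 B$ with the top Peterson cell $\Pet(\Delta)^\circ$, so that $\Pet = \overline{\Pet(\Delta)^\circ}$.

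The heart of the argument is to match the two affine pavings that are available for these varieties. Proposition~\ref{prop:paving} (following B\u{a}libanu, using the Lie-algebraic definition of $\Pet(\Delta)$) gives $\Pet(\Delta) = \bigsqcup_{I \subset \Delta} A_I \cdot w_I B$. On the other hand, Proposition~\ref{prop:vectorSpace} (Tymoczko and Precup, using the orbit-closure definition of $\Pet$) produces $\Pet = \bigsqcup_I (\Pet \cap B w_I B/B)$, with the cells appearing there again being exactly $A_I \cdot w_I B$. Since both pavings exhibit the two varieties as the same disjoint union of the same cells in $G/B$, the inclusion $\Pet \subset \Pet(\Delta)$ must be an equality, and irreducibility of $\Pet$ as a connected orbit closure transfers to $\Pet(\Delta)$.

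The main subtlety is that the identification of each Peterson cell with $A_I \cdot w_I B$ must be available in both the orbit-closure and Lie-algebraic settings; this is what the constructions of Tymoczko, Precup and B\u{a}libanu supply. A more self-contained alternative uses Lemma~\ref{lem:irredOp} (local irreducibility at $eB$): the open neighborhood $Z := \Pet(\Delta) \cap U^- B/B$ of $eB$ is irreducible of dimension $n$, and the closed subvariety $\Pet \cap Z$ of $Z$ also has dimension $n$ (being open and dense in the irreducible $\Pet$), hence equals $Z$, so $Z \subset \Pet$. Combined with equidimensionality of regular nilpotent Hessenberg varieties (cf.~\cite{precup,abe.fujita.zeng}), any further irreducible component of $\Pet(\Delta)$ would be $n$-dimensional yet contained in the closed set $\bigsqcup_{I \subsetneq \Delta} \Pet(I)^\circ$ of dimension $<n$, a contradiction.
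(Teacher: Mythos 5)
Your proposal departs substantially from the paper's argument, and both of your routes have gaps.

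\textbf{First route (matching the two affine pavings).} You invoke Proposition~\ref{prop:vectorSpace} as a fact about the orbit closure $\Pet=\overline{G^e\cdot w_0 B}$ and assert that the cells $\Pet\cap Bw_IB/B$ ``are again exactly $A_I\cdot w_IB$''. Neither claim is available at this stage. Tymoczko and Precup (and B\u{a}libanu) prove their paving results for the \emph{Lie-algebraic} model, i.e.\ for $\Pet(\Delta)$; the statement of Proposition~\ref{prop:vectorSpace} in terms of the orbit closure is exactly what is being justified by the appendix, whose logical order is Proposition~\ref{prop:paving} $\Rightarrow$ Lemma~\ref{lem:irredOp} $\Rightarrow$ Lemma~\ref{prop:pet.irred} $\Rightarrow$ Proposition~\ref{equalDefn}. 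Feeding Proposition~\ref{prop:vectorSpace} back into a proof of Lemma~\ref{prop:pet.irred} is therefore circular. Moreover Proposition~\ref{prop:vectorSpace} only says the cell is an affine space of dimension $|I|$; identifying it with $A_I\cdot w_IB$ is a separate step (it would follow from ``closed irreducible subvariety of $\Pet(I)^\circ$ of the same dimension equals $\Pet(I)^\circ$'', but you must say so, and you again need the Lie-algebraic paving to set it up).

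\textbf{Second route (Lemma~\ref{lem:irredOp} plus equidimensionality).} The assertion that $\Pet\cap Z$ is ``open and dense in the irreducible $\Pet$'' requires knowing that $\Pet$ meets $U^-B/B$ at all, equivalently that $1\cdot B\in\Pet$. This is true but not free: $\Pet$ is by definition the closure of an orbit through $w_0B$, which lies in the \emph{opposite} big cell $Bw_0B/B$, and arguing that the closure reaches $1\cdot B$ is essentially the content of the paving statements you would like to avoid relying on. The final step is also stated too quickly: a second component of $\Pet(\Delta)$ need not a priori lie inside $\bigsqcup_{I\subsetneq\Delta}\Pet(I)^\circ$; one must argue that any $n$-dimensional component meets, hence contains, the unique $n$-dimensional cell $\Pet(\Delta)^\circ$, and then that all such components coincide. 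Once you grant equidimensionality (Precup, Abe--Fujita--Zeng) and Proposition~\ref{prop:paving}, this cleaned-up version works and does not need $\Pet$ or Lemma~\ref{lem:irredOp} at all; this is exactly Remark~\ref{rem:irred} in the paper.

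\textbf{For comparison, the paper's proof of Lemma~\ref{prop:pet.irred}} takes yet a different route: it applies Borel's fixed-point theorem to the solvable group $A_\Delta$ acting on any irreducible component $Y$ of the projective variety $\Pet(\Delta)$, forcing $1\cdot B\in Y$; combined with the local irreducibility at $1\cdot B$ from Lemma~\ref{lem:irredOp}, this immediately gives a unique component. This argument avoids both equidimensionality and any appeal to the orbit-closure definition of $\Pet$, and hence avoids the circularity issues above.
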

\begin{proof}
Let $Y$ be an irreducible component of $\Pet(\Delta)$.
Recall
from \cref{sec:paving} 
that the (connected) group $A_\Delta=Stab_U(e)$
acts on $\Pet(\Delta)$, hence it acts on $Y$.
Since $Y$ is a closed (hence projective) subvariety of $G/B$
and since 
$A_\Delta$ is solvable, $Y$ admits an $A_\Delta$-fixed point by \cite[Thm.10.4]{borel:linear}.
This point must necessarily be $1B$, as this is the unique $A_\Delta$-fixed point in $G/B$.
In other words, every irreducible component of $\Pet(\Delta)$ contains $1B$;
the irreducibility of $\Pet(\Delta)$ now follows from the local irreducibility of $\Pet(\Delta)$ at $1B$; see \Cref{lem:irredOp}.
\end{proof}

\begin{prop}
\label{equalDefn}
The two definitions of the Peterson variety in \cref{2defns} agree, i.e., $\Pet=\Pet(\Delta)$.
\end{prop}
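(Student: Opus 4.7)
The plan is to combine the irreducibility of $\Pet(\Delta)$ established in \Cref{prop:pet.irred} with the affine paving of \Cref{prop:paving} and a dimension count. Specifically, I would first prove the closed inclusion $\overline{G^e \cdot w_0 B} \subseteq \Pet(\Delta)$, and then show that the left-hand side is a closed subvariety of dimension $n = |\Delta|$ inside an irreducible variety of the same dimension, forcing equality.

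The inclusion reduces, by closedness of $\Pet(\Delta)$, to checking that $G^e \cdot w_0 B \subseteq \Pet(\Delta)$, and this in turn breaks into two pieces. First, $w_0 B \in \Pet(\Delta)$: since $w_0$ sends each simple root to the negative of a simple root, $Ad(w_0^{-1}) e_\alpha$ is a nonzero scalar multiple of $e_{-\beta}$ for some $\beta \in \Delta$, so $Ad(w_0^{-1}) e$ lies in $\bigoplus_{\beta \in \Delta} \mathbb{C} e_{-\beta} \subseteq \mathcal{H}(\mathfrak{b}_\Delta)$. Second, $\Pet(\Delta)$ is $G^e$-stable: for $g \in G^e$ we have $Ad(g^{-1}) e = e$, so the defining condition at $ghB$ coincides with the one at $hB$. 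Combining these gives $\overline{G^e \cdot w_0 B} \subseteq \Pet(\Delta)$.

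For the dimension count, the top cell $\Pet(\Delta)^\circ$ of the paving in \Cref{prop:paving} is an affine space of dimension $n$, so $\dim \Pet(\Delta) = n$. On the other side, Kostant's theorem gives $(G^e)^\circ \cong \mathbb{C}^n$, and the stabilizer of $w_0 B$ inside $(G^e)^\circ$ equals $(G^e)^\circ \cap w_0 B w_0^{-1} = (G^e)^\circ \cap B^-$, which is trivial because $(G^e)^\circ \subseteq U$ and $U \cap B^- = \{1\}$. Hence the orbit $G^e \cdot w_0 B$, and therefore its closure, also has dimension $n$. Since $\Pet(\Delta)$ is irreducible and $\overline{G^e \cdot w_0 B}$ is a closed subvariety of the same dimension, the two must agree.

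The main conceptual hurdle in this appendix is the irreducibility of $\Pet(\Delta)$, but this has already been handled by \Cref{prop:pet.irred}, where every irreducible component is shown to pass through the unique $A_\Delta$-fixed point $1.B$ and Kostant's analysis supplies local irreducibility there. With irreducibility in hand, the remaining steps above are essentially formal: a direct root-system computation at $w_0 B$, the tautological $G^e$-invariance of the condition $Ad(g^{-1}) e \in \mathcal{H}(\mathfrak{b}_\Delta)$, and the standard orbit-dimension calculation using Kostant's structure theorem for $(G^e)^\circ$.
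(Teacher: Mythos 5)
Your proof is correct and arrives at the same conclusion using the same two key inputs (the irreducibility of $\Pet(\Delta)$ from \cref{prop:pet.irred} and the affine paving from \cref{prop:paving}), but it routes the argument somewhat differently from the paper. The paper's proof leans on the stronger content of \cref{prop:paving}(a), namely that $A_\Delta$ acts transitively on $\Pet(\Delta)^\circ$, so that $G^e\cdot w_0B = A_\Delta\cdot w_0B = \Pet(\Delta)^\circ$ is open and dense, and the identity $\Pet = \Pet(\Delta)$ falls out immediately by taking closures in the irreducible variety $\Pet(\Delta)$. You instead verify the containment $\overline{G^e\cdot w_0B}\subseteq\Pet(\Delta)$ from first principles (a direct root computation shows $w_0B\in\Pet(\Delta)$, and $G^e$-stability of $\Pet(\Delta)$ is tautological from the defining condition), and then close the gap with a dimension count: $\dim\Pet(\Delta)=n$ from the top cell, while $\dim G^e\cdot w_0B = n$ because the stabilizer $(G^e)^\circ\cap B^-$ is trivial, so a closed $n$-dimensional subvariety of an irreducible $n$-dimensional variety must be everything. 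Your approach is slightly more self-contained for the inclusion step (you do not need to know the orbit fills out the whole cell), at the cost of an extra stabilizer/dimension calculation; the paper's is terser because it extracts more from \cref{prop:paving}. One small caution worth flagging: your computation of $Ad(w_0^{-1})e$ tacitly takes $e=\sum_{\alpha\in\Delta}e_\alpha$ (the sum over simple roots, matching $\n$ in \cref{altDefn}); if one instead takes $e=\sum_{\alpha\in\Phi^+}e_\alpha$ as written in \cref{subsec:petersonVariety}, the image $Ad(w_0^{-1})e$ lies in all of $\mathfrak{n}^-$ rather than $\bigoplus_{\beta\in\Delta}\mathbb{C}e_{-\beta}$, so the claim that $w_0B\in\Pet(\Delta)$ needs the simple-root normalization of $e$ to go through as stated.
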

\begin{proof}
Observe that $G^e=A_\Delta\times Z(G)$,
where $Z(G)$ is the center of $G$,
cf. \cite[p.~9]{kostant:flag}.
Since $Z(G)\subset B$, we have an equality $A_\Delta. w_0B=G^e. w_0B \subset \Pet \cap \Pet(\Delta)$.
It follows from \cref{prop:paving} that $G^e . w_0B$ is an open subset of $\Pet(\Delta)$,
and since $\Pet(\Delta)$ is irreducible by \cref{prop:pet.irred}, $\Pet=\Pet(\Delta)$.
\end{proof}

\begin{rem}\label{rem:irred}
We recall an alternate proof of the irreducibility of the variety $\Pet(\Delta)$,
following Precup \cite[Cor 14]{precup} and \cite[Lemma 7.1]{anderson.tymoczko},
as explained to us by B{\u a}libanu.

Let $H=\mathfrak b\oplus(\oplus_{\alpha\in\Delta}\mathfrak g_{-\alpha})$,
and consider the variety $\mathcal Z=G\times^B H$,
equipped with the map $\mathcal Z\to\mathfrak g$ given by $(g,x)\mapsto Ad(g)x$.
For $x$ a regular semisimple element of $\mathfrak g$,
the fiber $\mathcal Z_x$ has dimension $|\Delta|$; see \cite[Cor 3]{precup}.
Since regular semisimple elements are dense in $\mathfrak g$,
it follows from \cite[Ch.~1, \S8, Thms. 2, 3]{mumford} that each irreducible component
of the fiber $\mathcal Z_e=\Pet(\Delta)$ has dimension greater than or equal to $|\Delta|$,
and hence $\Pet(\Delta)$ is pure-dimensional.
Following \cite[\S 1.5]{fulton:IT},
the fundamental classes of the irreducible components of $\Pet(\Delta)$ freely generate the top Chow group of $\Pet(\Delta)$.
Since there is a unique top-dimensional cell in the affine paving of \cref{prop:paving}(b),
it follows that $\Pet(\Delta)$ has a unique irreducible component.
\end{rem}

\section{Intersection Multiplicities for classical diagrams}
\label{classicalCalculations}
We present here the details of our calculation in \cref{ClassicalFormulae} of the intersection multiplicities $m(v_I)$ for classical Dynkin diagrams.

\subsection{Type A}
\label{Acalc}
Let $V$ be a vector space with orthonormal basis $\epsilon_1,\cdots,\epsilon_n$.
The vectors $\{\epsilon_i-\epsilon_j\}$ form a root system with Dynkin diagram $A_{n-1}$.
A choice of simple roots is $\alpha_i=\epsilon_i-\epsilon_{i+1}$ for $1\leq i<n$,
and the Weyl group is naturally identified with the symmetric group on $\{\epsilon_1,\cdots,\epsilon_n\}$.
We calculate $v_Iw_I=[ 1,n,\cdots,2 ]$, and
$
\set{\alpha\in\Phi_I ^+}{s_\alpha\not\leq v_Iw_I}=\set{\epsilon_1-\epsilon_i}{2\leq i\leq n}.
$
Now, $ht(\epsilon_1-\epsilon_i)=i-1$.
Consequently \Cref{rootHts,table:exponents} yield $m(v_I)=1$.

\subsection{Type B}
\label{Bcalc}
Let $V$ be a vector space with orthonormal basis $\epsilon_1,\cdots,\epsilon_n$.
The vectors $\{\pm\epsilon_i\pm\epsilon_j\}\cup\{\pm\epsilon_i\}$ form a root system with Dynkin diagram $B_n$.
A choice of simple roots is $\alpha_i=\epsilon_i-\epsilon_{i+1}$ for $i<n$, and $\alpha_n=\epsilon_n$.

Let $S_{2n}$ be the symmetric group on the letters $\{1,\cdots,n,\bar n,\cdots,\bar 1\}$,
and let $r_{i\,j}\in S_{2n}$ be the transposition switching the letters $i$ and $j$.
The Weyl group $W$ can be viewed as the subgroup of $S_{2n}$ generated by the reflections,
\begin{align*}
s_{\epsilon_i-\epsilon_j} & = r_{i\,j} r_{\bar i\,\bar j},& 
s_{\epsilon_i+\epsilon_j} & = r_{i\,\bar j} r_{\bar i\,j},&  
s_{\epsilon_i}            & = r_{i\,\bar i},&1\leq i<j\leq n. 
\end{align*}
Given $v,w\in W$, if $v\leq w$ in the Bruhat order on $W$,
then $v\leq w$ in the Bruhat order on $S_{2n}$; see \cite[\S6.1.1]{lakshmibai.raghavan}.
We calculate \begin{align*}v_Iw_I=[ \bar 2,\cdots, \bar n, 1,\bar 1,n,\cdots,2 ]\end{align*} and
$
\set{\alpha\in\Phi_I ^+}{s_\alpha\not\leq v_Iw_I}=\set{\epsilon_1+\epsilon_i}{2\leq i\leq n}\cup\{\epsilon_1\}.
$
Now, $ht(\epsilon_1)=n$, and $ht(\epsilon_1+\epsilon_i)=2n+1-i$.
Following \Cref{rootHts,table:exponents}, we have
\begin{align*}
m(v_I)=\frac{n(n+1)\cdots(2n-1)}{(1)(3)\cdots(2n-1)}=2^{n-1}.
\end{align*}

\subsection{Type C}
\label{Ccalc}
Let $V$ be a vector space with orthonormal basis $\epsilon_1,\cdots,\epsilon_n$.
The set of vectors $\{\pm\epsilon_i\pm\epsilon_j\}\cup\{\pm2\epsilon_i\}$  forms a root system with Dynkin diagram $C_n$.
A choice of simple roots is $\alpha_i=\epsilon_i-\epsilon_{i+1}$ for $i<n$, and $\alpha_n=2\epsilon_n$.
The Weyl group of $C_n$ is isomorphic the Weyl group of $B_n$.
We calculate 
$
\set{\alpha\in\Phi_I ^+}{s_\alpha\not\leq v_Iw_I}=\set{\epsilon_1+\epsilon_i}{2\leq i\leq n}\cup\{2\epsilon_1\}.
$
Now, $ht(2\epsilon_1)=2n-1$, and $ht(\epsilon_1+\epsilon_i)=2n-i$, for $2\leq i\leq n$.
Following \Cref{rootHts,table:exponents}, we have
\begin{align*}
m(v_I)=\frac{n(n+1)\cdots(2n-1)}{(1)(3)\cdots(2n-1)}=2^{n-1}.
\end{align*}

\subsection{Type D}
\label{Dcalc}
Let $V$ be a vector space with orthonormal basis $\epsilon_1,\cdots,\epsilon_n$.
The set of vectors $\{\pm\epsilon_i\pm\epsilon_j\}$ forms a root system with Dynkin diagram $D_n$.
A choice of simple roots is $\alpha_i=\epsilon_i-\epsilon_{i+1}$ for $i<n$, and $\alpha_n=\epsilon_{n-1}+\epsilon_{n+1}$. 

Let $S_{2n}$ be the symmetric group on the letters $\{1,\cdots,n,\bar n,\cdots,\bar 1\}$,
and let $r_{i\,j}\in S_{2n}$ be the transposition switching the letters $i$ and $j$.
The Weyl group $W$ can be viewed as the subgroup of $S_{2n}$ generated by the reflections,
\begin{align*}
&&s_{\epsilon_i-\epsilon_j} & = r_{i\,j}, &
s_{\epsilon_i+\epsilon_j} & = r_{i\,\bar j} r_{\bar i,\,j},&
1\leq i<j\leq n.
\end{align*}
Given $v,w\in W$, if $v\leq w$ in the Bruhat order on $W$,
then $v\leq w$ in the Bruhat order on $S_{2n}$; see \cite[\S7.1.1]{lakshmibai.raghavan}.
A simple computation yields
\begin{align*}
v_Iw_I=\begin{cases}
\left[ \bar 2,\cdots, \bar{n-1}, 1, n \right]      & \text{if }n\text{ is even},\\
\left[ \bar 2,\cdots, \bar{n-1}, 1 ,\bar n \right] & \text{if }n\text{ is odd}.
\end{cases}
\end{align*}
Observe that
$
\set{\alpha\in\Phi_I ^+}{s_\alpha\not\leq v_Iw_I}=\set{\epsilon_1+\epsilon_i}{2\leq i\leq n}\cup\{\epsilon_1-\epsilon_n\}.
$
Now $ht(\epsilon_1-\epsilon_n)=n-1$, and $ht(\epsilon_1+\epsilon_i)=2n-1-i$, for $2\leq i\leq n$.
Consequently, we deduce from \cref{rootHts,table:exponents} that
\begin{align*}
m(v_I)=\frac{(n-1)\, (n-1) n \cdots (2n-3)}{(1)(3)\cdots(2n-3)\,(n-1)}
    =\frac{(n-1) \cdots (2n-3)}{(1)(3)\cdots(2n-3)}=2^{n-2}.
\end{align*}

\bibliographystyle{amsalpha}
\bibliography{biblio}

\providecommand{\bysame}{\leavevmode\hbox to3em{\hrulefill}\thinspace}
\providecommand{\MR}{\relax\ifhmode\unskip\space\fi MR }
\providecommand{\MRhref}[2]{%
  \href{http://www.ams.org/mathscinet-getitem?mr=#1}{#2}
}
\providecommand{\href}[2]{#2}
\begin{thebibliography}{AHKZ24}

\bibitem[AFZ20]{abe.fujita.zeng}
Hiraku Abe, Naoki Fujita, and Haozhi Zeng, \emph{Geometry of regular
  {H}essenberg varieties}, Transform. Groups \textbf{25} (2020), no.~2,
  305--333. \MR{4098882}

\bibitem[AGM11]{AGM:KTpos}
Dave Anderson, Stephen Griffeth, and Ezra Miller, \emph{Positivity and
  {K}leiman transversality in equivariant {$K$}-theory of homogeneous spaces},
  J. Eur. Math. Soc. (JEMS) \textbf{13} (2011), no.~1, 57--84. \MR{2735076}

\bibitem[AH20]{Abe:survey}
Hiraku Abe and Tatsuya Horiguchi, \emph{A survey of recent developments on
  {H}essenberg varieties}, Schubert calculus and its applications in
  combinatorics and representation theory, Springer Proc. Math. Stat., vol.
  332, Springer, Singapore, [2020] \copyright 2020, pp.~251--279. \MR{4167519}

\bibitem[AHKZ24]{abe.horiguchi.kuwata.zeng:left-right}
Hiraku Abe, Tatsuya Horiguchi, Hideya Kuwata, and Haozhi Zeng, \emph{Geometry
  of {P}eterson {S}chubert calculus in type {A} and left-right diagrams},
  Algebr. Comb. \textbf{7} (2024), no.~2, 383--412. \MR{4741922}

\bibitem[AJS94]{AJS}
H.~H. Andersen, J.~C. Jantzen, and W.~Soergel, \emph{Representations of quantum
  groups at a {$p$}th root of unity and of semisimple groups in characteristic
  {$p$}: independence of {$p$}}, Ast\'{e}risque (1994), no.~220, 321.
  \MR{1272539}

\bibitem[AT10]{anderson.tymoczko}
Dave Anderson and Julianna Tymoczko, \emph{Schubert polynomials and classes of
  {H}essenberg varieties}, J. Algebra \textbf{323} (2010), no.~10, 2605--2623.
  \MR{2609167}

\bibitem[B{\u{a}}l17]{balibanu:peterson}
Ana B{\u{a}}libanu, \emph{The {P}eterson variety and the wonderful
  compactification}, Represent. Theory \textbf{21} (2017), 132--150.
  \MR{3673527}

\bibitem[Bil99]{billey:kostant}
Sara~C. Billey, \emph{Kostant polynomials and the cohomology ring for {$G/B$}},
  Duke Math. J. \textbf{96} (1999), no.~1, 205--224. \MR{1663931}

\bibitem[Bor91]{borel:linear}
Armand Borel, \emph{Linear algebraic groups}, second ed., Graduate Texts in
  Mathematics, vol. 126, Springer-Verlag, New York, 1991. \MR{1102012}

\bibitem[Bou98]{bourbaki:Lie13}
Nicolas Bourbaki, \emph{Lie groups and {L}ie algebras. {C}hapters 1--3},
  Elements of Mathematics (Berlin), Springer-Verlag, Berlin, 1998, Translated
  from the French, Reprint of the 1989 English translation. \MR{1728312}

\bibitem[Bou02]{bourbaki:Lie46}
\bysame, \emph{Lie groups and {L}ie algebras. {C}hapters 4--6}, Elements of
  Mathematics (Berlin), Springer-Verlag, Berlin, 2002, Translated from the 1968
  French original by Andrew Pressley. \MR{1890629}

\bibitem[Bou05]{bourbaki:Lie79}
\bysame, \emph{Lie groups and {L}ie algebras. {C}hapters 7--9}, Elements of
  Mathematics (Berlin), Springer-Verlag, Berlin, 2005, Translated from the 1975
  and 1982 French originals by Andrew Pressley. \MR{2109105}

\bibitem[Car72]{carter:simple}
Roger~W. Carter, \emph{Simple groups of {L}ie type}, John Wiley \& Sons,
  London-New York-Sydney, 1972, Pure and Applied Mathematics, Vol. 28.
  \MR{0407163}

\bibitem[CG97]{chriss.ginzburg}
Neil Chriss and Victor Ginzburg, \emph{Representation theory and complex
  geometry}, Birkh\"auser Boston, Inc., Boston, MA, 1997. \MR{1433132}

\bibitem[CM93]{mcgovern.collingwood:nilpotent.orbits}
David~H. Collingwood and William~M. McGovern, \emph{Nilpotent orbits in
  semisimple {L}ie algebras}, Van Nostrand Reinhold Mathematics Series, Van
  Nostrand Reinhold Co., New York, 1993. \MR{1251060}

\bibitem[Dre15]{drellich:monk}
Elizabeth Drellich, \emph{Monk's rule and {G}iambelli's formula for {P}eterson
  varieties of all {L}ie types}, J. Algebraic Combin. \textbf{41} (2015),
  no.~2, 539--575. \MR{3306081}

\bibitem[EG98]{edidin.graham}
Dan Edidin and William Graham, \emph{Equivariant intersection theory}, Invent.
  Math. \textbf{131} (1998), no.~3, 595--634. \MR{1614555}

\bibitem[FH91]{fulton.harris:RTbook}
William Fulton and Joe Harris, \emph{Representation theory}, Graduate Texts in
  Mathematics, vol. 129, Springer-Verlag, New York, 1991, A first course,
  Readings in Mathematics. \MR{1153249}

\bibitem[Ful97]{fulton:young}
William Fulton, \emph{Young tableaux}, London Mathematical Society Student
  Texts, vol.~35, Cambridge University Press, Cambridge, 1997, With
  applications to representation theory and geometry. \MR{1464693}

\bibitem[Ful98]{fulton:IT}
\bysame, \emph{Intersection theory}, second ed., Ergebnisse der Mathematik und
  ihrer Grenzgebiete. 3. Folge. A Series of Modern Surveys in Mathematics
  [Results in Mathematics and Related Areas. 3rd Series. A Series of Modern
  Surveys in Mathematics], vol.~2, Springer-Verlag, Berlin, 1998. \MR{1644323}

\bibitem[GG22]{goldin.gorbutt:PetSchubCalc}
Rebecca Goldin and Brent Gorbutt, \emph{A positive formula for type {A}
  {P}eterson {S}chubert calculus}, Matematica \textbf{1} (2022), no.~3,
  618--665. \MR{4482736}

\bibitem[GKM98]{GKM}
Mark Goresky, Robert Kottwitz, and Robert MacPherson, \emph{Equivariant
  cohomology, {K}oszul duality, and the localization theorem}, Invent. Math.
  \textbf{131} (1998), no.~1, 25--83. \MR{1489894}

\bibitem[GOV97]{vinberg}
V.~V. Gorbatsevich, A.~L. Onishchik, and E.~B. Vinberg, \emph{Foundations of
  {L}ie theory and {L}ie transformation groups}, Springer-Verlag, Berlin, 1997,
  Translated from the Russian by A. Kozlowski, Reprint of the 1993 translation
  [{\it Lie groups and Lie algebras. I}, Encyclopaedia Math. Sci., 20,
  Springer, Berlin, 1993; MR1306737 (95f:22001)]. \MR{1631937}

\bibitem[Gra01]{graham:positivity}
William Graham, \emph{Positivity in equivariant {S}chubert calculus}, Duke
  Math. J. \textbf{109} (2001), no.~3, 599--614. \MR{1853356}

\bibitem[GS21]{goldin.singh}
Rebecca Goldin and Rahul Singh, \emph{Equivariant {C}hevalley, {G}iambelli, and
  {M}onk formulae for the {P}eterson variety}, available at arXiv:2111.15663,
  2021.

\bibitem[HHM15]{harada.horiguchi.masuda:Peterson}
Megumi Harada, Tatsuya Horiguchi, and Mikiya Masuda, \emph{The equivariant
  cohomology rings of {P}eterson varieties in all {L}ie types}, Canad. Math.
  Bull. \textbf{58} (2015), no.~1.

\bibitem[Hor24]{horiguchi2021mixed}
Tatsuya Horiguchi, \emph{Mixed {E}ulerian numbers and {P}eterson {S}chubert
  calculus}, Int. Math. Res. Not. IMRN (2024), no.~2, 1422--1471. \MR{4692376}

\bibitem[Hsi75]{hsiang}
Wu-yi Hsiang, \emph{Cohomology theory of topological transformation groups},
  Springer-Verlag, New York-Heidelberg, 1975, Ergebnisse der Mathematik und
  ihrer Grenzgebiete, Band 85. \MR{0423384}

\bibitem[HT11]{harada.tymoczko:Monk}
Megumi Harada and Julianna Tymoczko, \emph{A positive {M}onk formula in the
  ${S}^1$-equivariant cohomology of type ${A}$ {P}eterson varieties}, Proc.
  Lond. Math. Soc. (3) \textbf{103} (2011), no.~1, 40--72.

\bibitem[Hum75]{humphreys:linear}
James~E. Humphreys, \emph{Linear algebraic groups}, Springer-Verlag, New
  York-Heidelberg, 1975, Graduate Texts in Mathematics, No. 21. \MR{0396773}

\bibitem[Ins15]{insko:schubert}
Erik Insko, \emph{Schubert calculus and the homology of the {P}eterson
  variety}, Electron. J. Combin. \textbf{22} (2015), no.~2, Paper 2.26, 12.
  \MR{3359929}

\bibitem[IT16]{insko.tymoczko:intersection.theory}
Erik Insko and Julianna Tymoczko, \emph{Intersection theory of the {P}eterson
  variety and certain singularities of {S}chubert varieties}, Geom. Dedicata
  \textbf{180} (2016), 95--116. \MR{3451459}

\bibitem[Kle74]{kleiman:transversality}
Steven~L. Kleiman, \emph{The transversality of a general translate}, Compositio
  Math. \textbf{28} (1974), 287--297. \MR{360616}

\bibitem[Kly85]{klyachko85}
A.~A. Klyachko, \emph{Orbits of a maximal torus on a flag space}, Funktsional.
  Anal. i Prilozhen. \textbf{19} (1985), no.~1, 77--78. \MR{783715}

\bibitem[Kly95]{klyachko95}
\bysame, \emph{Toric varieties and flag spaces}, Trudy Mat. Inst. Steklov.
  \textbf{208} (1995), no.~Teor. Chisel, Algebra i Algebr. Geom., 139--162,
  Dedicated to Academician Igor' Rostislavovich Shafarevich on the occasion of
  his seventieth birthday (Russian). \MR{1730262}

\bibitem[Kos59]{kostant:principal}
Bertram Kostant, \emph{The principal three-dimensional subgroup and the {B}etti
  numbers of a complex simple {L}ie group}, Amer. J. Math. \textbf{81} (1959),
  973--1032. \MR{114875}

\bibitem[Kos96]{kostant:flag}
\bysame, \emph{Flag manifold quantum cohomology, the {T}oda lattice, and the
  representation with highest weight {$\rho$}}, Selecta Math. (N.S.) \textbf{2}
  (1996), no.~1, 43--91. \MR{1403352}

\bibitem[Kum02]{kumar:book}
Shrawan Kumar, \emph{Kac-{M}oody groups, their flag varieties and
  representation theory}, Progress in Mathematics, vol. 204, Birkh\"{a}user
  Boston, Inc., Boston, MA, 2002. \MR{1923198}

\bibitem[LR08]{lakshmibai.raghavan}
Venkatramani Lakshmibai and Komaranapuram~N. Raghavan, \emph{Standard monomial
  theory}, Encyclopaedia of Mathematical Sciences, vol. 137, Springer-Verlag,
  Berlin, 2008, Invariant theoretic approach, Invariant Theory and Algebraic
  Transformation Groups, 8. \MR{2388163}

\bibitem[Mor42]{morozov}
V.~V. Morozov, \emph{On a nilpotent element in a semi-simple {L}ie algebra}, C.
  R. (Doklady) Acad. Sci. URSS (N.S.) \textbf{36} (1942), 83--86. \MR{0007750}

\bibitem[Mum88]{mumford}
David Mumford, \emph{The red book of varieties and schemes}, Lecture Notes in
  Mathematics, vol. 1358, Springer-Verlag, Berlin, 1988. \MR{971985}

\bibitem[NT23]{nadeau.tewari}
Philippe Nadeau and Vasu Tewari, \emph{The permutahedral variety, mixed
  {E}ulerian numbers, and principal specializations of {S}chubert polynomials},
  Int. Math. Res. Not. IMRN (2023), no.~5, 3615--3670. \MR{4565650}

\bibitem[Pet97]{peterson:notes}
Dale Peterson, \emph{Quantum cohomology of ${G}/{P}$}, Lecture Notes, M.I.T.,
  1997.

\bibitem[Pre18]{precup}
Martha Precup, \emph{The {B}etti numbers of regular {H}essenberg varieties are
  palindromic}, Transform. Groups \textbf{23} (2018), no.~2, 491--499.
  \MR{3805214}

\bibitem[Rie03]{rietsch:totally}
Konstanze Rietsch, \emph{Totally positive {T}oeplitz matrices and quantum
  cohomology of partial flag varieties}, J. Amer. Math. Soc. \textbf{16}
  (2003), no.~2, 363--392. \MR{1949164}

\bibitem[{Sta}24]{stacks-project}
The {Stacks project authors}, \emph{The stacks project},
  \url{https://stacks.math.columbia.edu}, 2024.

\bibitem[{The}20]{sagemath}
{The Sage Developers}, \emph{{S}agemath, the {S}age {M}athematics {S}oftware
  {S}ystem ({V}ersion 9.0)}, 2020, {\tt https://www.sagemath.org}.

\bibitem[Tit66]{tits:LAconstruction}
J.~Tits, \emph{Sur les constantes de structure et le th\'{e}or\`eme d'existence
  des alg\`ebres de {L}ie semi-simples}, Inst. Hautes \'{E}tudes Sci. Publ.
  Math. (1966), no.~31, 21--58. \MR{214638}

\bibitem[Tot14]{totaro}
Burt Totaro, \emph{Chow groups, {C}how cohomology, and linear varieties}, Forum
  Math. Sigma \textbf{2} (2014), Paper No. e17, 25. \MR{3264256}

\bibitem[Tym07]{tymoczko:paving}
Julianna~S. Tymoczko, \emph{Paving {H}essenberg varieties by affines}, Selecta
  Math. (N.S.) \textbf{13} (2007), no.~2, 353--367. \MR{2361098}

\end{thebibliography}

\end{document}